\DeclarePairedDelimiter{\floor}{\lfloor}{\rfloor}
\theoremstyle{definition}
\newtheorem{Def}{Definition}[section]
\newtheorem{ex}[Def]{Example}
\newtheorem{cons}[Def]{Construction}
\newtheorem{symmred}[Def]{Symmetry Reduction}
\newtheorem{rem}[Def]{Remark}
\theoremstyle{plain}
\newtheorem{prop}[Def]{Proposition}
\newtheorem*{thm*}{Theorem}
\newtheorem{mainthm}{\sc Theorem}
\newtheorem{lem}[Def]{Lemma}
\newtheorem{cor}[Def]{Corollary}
\newtheorem*{cor*}{Corollary}
\newtheorem{con}[Def]{Conjecture}
\newtheorem*{con*}{Conjecture}
\newtheorem*{frag*}{Question}
\renewcommand{\O}{\mathsf O}
\newcommand{\SO}{\mathsf{SO}}
\newcommand{\SU}{\mathsf{SU}}
\newcommand{\G}{\mathsf{G}}
\newcommand{\K}{\mathsf{K}}
\newcommand{\GL}{\operatorname{{\mathsf GL}}}
\newcommand{\C}{{\mathds C}}
\newcommand{\R}{{\mathds R}}
\newcommand{\N}{{\mathds N}}
\newcommand{\cC}{{\mathcal C}}
\newcommand{\cO}{{\mathcal O}}
\newcommand{\cV}{{\mathcal V}}
\newcommand{\Sym}{\operatorname{Sym}}
\newcommand{\Id}{\operatorname{I}}
\newcommand{\adj}{\operatorname{adj}} 
\newcommand{\dd}{\mathrm{d}} 
\newcommand{\tr}{\operatorname{tr}}
\newcommand{\fa}{{\mathfrak a}}
\newcommand{\scal}[1]{\langle #1 \rangle}
\title[Convex Algebraic Geometry of sets with continuous symmetries]{Two results on the Convex Algebraic Geometry of sets with continuous symmetries}
\author[R. G. Bettiol]{Renato G. Bettiol}
\address{\!\!\!\begin{tabular}{lll}
CUNY Lehman College & & CUNY Graduate Center \\
Department of Mathematics & & Department of Mathematics \\
250 Bedford Park~Blvd W & & 365 Fifth Avenue \\
Bronx, NY, 10468, USA & & New York, NY, 10016, USA
\end{tabular}
}
\email{r.bettiol@lehman.cuny.edu}
\author[M. Kummer]{Mario Kummer}
\address{Technische Universit\"at Dresden \newline
\indent Fakult\"at Mathematik \newline
\indent Institut f\"ur Geometrie \newline
\indent Zellescher Weg 12-14 \newline
\indent 01062 Dresden, Germany}
\email{mario.kummer@tu-dresden.de}
\author[R. A. E. Mendes]{Ricardo A. E. Mendes}
\address{University of Oklahoma\newline
\indent Department of Mathematics\newline
\indent 601 Elm Ave\newline
\indent Norman, OK, 73019-3103, USA}
\email{ricardo.mendes@ou.edu}
\numberwithin{equation}{section}
\date{\today}
\begin{document}

\subjclass{15A39, 14P10, 22E46, 52A05, 90C22}

\begin{abstract}
We prove two results on convex subsets of Euclidean spaces invariant under an orthogonal group action. 
First, we show that invariant spectrahedra admit an equivariant spectrahedral description, i.e., can be described by an equivariant linear matrix inequality.
Second, we show that the bijection induced by Kostant's Convexity Theorem between convex subsets invariant under a polar representation and convex subsets of a section invariant under the Weyl group preserves the classes of convex semi-algebraic sets, spectrahedral shadows, and rigidly convex sets. 
\end{abstract}
\maketitle

\section{Introduction}

A \emph{spectrahedron} is a subset $S\subset \R^n$ determined by a linear matrix inequality
\[ M(x)=M_0 + x_1 M_1 + \dots + x_n M_n \succeq 0, \]
where $M_j\in\Sym^2(\R^d)$ are symmetric $d\times d$ matrices and $M\succeq0$ means that $M$ is positive-semidefinite. This class of convex sets in $\R^n$ contains all convex polyhedra (as the subclass where all $M_j$'s commute), and has received great attention in recent years in the emerging field of Convex Algebraic Geometry, see, e.g.~\cite{SIAMbook,netzer-plaumann}. 
Spectrahedra play an important role in Optimization and Applied Mathematics as feasible regions for semidefinite programming, a powerful generalization of linear programming on convex polyhedra to find extrema of linear functions, for which efficient interior-point methods are available. In the literature, spectrahedra with symmetries have been studied both for finite symmetry groups, e.g., {\cite{gatermann-parrilo,vallentin}}, and continuous symmetry groups, e.g., \cite{orbitopes, KS22, SS22}. In this note, we focus on the continuous case.

If a spectrahedron $S=\{x\in\R^n : M(x)\succeq0 \}$, with $M\colon\R^n\to \Sym^2(\R^d)$ as above, is invariant under an orthogonal action of a Lie group $\G$ on $\R^n$, it is natural to ask whether there is a $\G$-equivariant spectrahedral description of $S$. An answer entails defining an orthogonal $\G$-representation on $\R^d$ for which $M$ becomes $\G$-equivariant, or, more generally, replacing $M$ with a $\G$-equivariant affine-linear map $\overline M\colon \R^n \to\Sym^2(V)$, where $V$ is some orthogonal $\G$-representation, such that $S=\{x\in\R^n : \overline M(x)\succeq0 \}$. Our first main result answers affirmatively this question:

\begin{mainthm}\label{thm:main-equivdescrp}
If a spectrahedron $S\subset \R^n$ is invariant under the orthogonal action of a compact Lie group $\G$, then there exists a $\G$-representation $V$ and a $\G$-equivariant affine-linear map $\overline M\colon \R^n\to \Sym^2(V)$ such that $S=\{x\in\R^n : \overline M(x)\succeq0 \}$.
\end{mainthm}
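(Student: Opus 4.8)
The plan is to start from an arbitrary (non-equivariant) linear matrix inequality $M(x) = M_0 + \sum x_j M_j \succeq 0$ defining $S$, and to average it over the group $\G$ to produce an equivariant one. Concretely, fix a Haar probability measure on the compact group $\G$. For each $g \in \G$ the transformed map $x \mapsto M(g^{-1}x)$ is again an affine-linear map into $\Sym^2(\R^d)$ whose positivity locus is $g\cdot S = S$; the difficulty is that these matrix pencils live in the same space $\Sym^2(\R^d)$ but need not be intertwined by any $\G$-action on $\R^d$. The standard device to handle this is to pass to a \emph{direct integral / block construction}: replace $\R^d$ by the space $V$ of (say, continuous) functions $\G \to \R^d$, i.e.\ $V \cong L^2(\G)\otimes\R^d$ with the $\G$-action $(g\cdot f)(h) = f(g^{-1}h)$, and define $\overline M(x) \in \Sym^2(V)$ as the operator acting on $f\in V$ by $(\overline M(x) f)(h) = M(h^{-1}x)\, f(h)$. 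Then $\overline M$ is manifestly $\G$-equivariant and affine-linear in $x$, and $\overline M(x) \succeq 0$ if and only if $M(h^{-1}x)\succeq 0$ for all $h$, which holds if and only if $x \in \bigcap_h h\cdot S = S$.

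The remaining issue is that $V$ as described is infinite-dimensional, whereas the statement asks for a genuine finite-dimensional $\G$-representation $V$. To fix this, I would observe that the affine-linear map $g \mapsto (M_0 + \sum_j x_j\, M_j^{(g)})$, where $M_j^{(g)}$ is $M_j$ precomposed with $g$, varies within the finite-dimensional vector space $W = \Sym^2(\R^d)^{\oplus(n+1)}$ spanned by the coefficient matrices; the $\G$-orbit of the tuple $(M_0, M_1, \dots, M_n)$ under the natural $\G$-action on $W$ spans a finite-dimensional $\G$-invariant subspace $W_0 \subseteq W$. One then only needs finitely many group elements $g_1, \dots, g_N$ whose precomposed pencils, together with the $\G$-translates, capture the defining inequalities — or, more cleanly, decompose the (infinite-dimensional) representation $V$ above into $\G$-isotypic components and discard all but finitely many, checking that the positivity locus is unchanged. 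Since $S$ is cut out by a single finite LMI, only finitely many isotypic pieces are actually needed; this compactness/finiteness reduction is where I expect the main technical work to lie.

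An alternative, possibly cleaner route avoiding direct integrals altogether: let $\G_0 \subseteq \G$ be such that $S = \bigcap_{g\in F} g\cdot S$ for a finite set $F\subseteq \G$ — this is not automatic, but one can instead take $V = \bigoplus$ over a finite set of coset representatives combined with an induced-representation construction. Since $\G$ is compact, the regular representation decomposes discretely, so $\overline M$ as above is block-diagonal with respect to the isotypic decomposition of $L^2(\G)\otimes\R^d$, and each block is a finite-dimensional $\G$-representation with an equivariant affine-linear pencil; the positivity of $\overline M(x)$ is equivalent to positivity of every block. One then argues that $S$ equals the positivity locus of the sum of finitely many blocks: since $S$ is closed and the blocks' positivity loci form a decreasing-under-intersection family of spectrahedra whose total intersection is $S$, a dimension/Noetherianity argument (the lattice of faces, or the algebraic boundary, stabilizes) shows finitely many suffice. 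The honest obstacle throughout is ensuring finite-dimensionality of $V$ while retaining equivariance; the positivity bookkeeping and the equivariance itself are straightforward once the right ambient representation is identified.
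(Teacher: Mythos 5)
Your infinite-dimensional construction is exactly the paper's \Cref{con:l2construction} and \Cref{prop:prop}: pass to $\cV=L^2(\G,\R^d)$ with the left regular representation, and let $\widetilde M(x)$ act as pointwise multiplication by $M(h^{-1}x)$. You correctly verify equivariance and that the positivity locus is $\bigcap_g g\cdot S=S$. That much is right and matches the paper. The genuine gap is the finite-dimensional reduction, which you correctly identify as the main technical content but then do not actually carry out; all three of the routes you sketch fall short. The ``finite set $F$ of group elements'' route you yourself discard as not automatic. The ``decompose into isotypic pieces and discard all but finitely many, checking the positivity locus is unchanged'' route contains no argument at all for the check. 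And the ``Noetherianity / stabilization of the face lattice or algebraic boundary'' route is simply false as a general principle: if $V_1\subset V_2\subset\cdots$ exhaust the isotypic components and $\overline S_N=\{x:\widetilde M(x)|_{V_N}\succeq 0\}$, one does obtain a nested decreasing chain of spectrahedra with $\bigcap_N\overline S_N=S$, but such a chain need not stabilize even when the limit is itself a spectrahedron --- take $\overline S_N=\{(x,y):x^2+y^2\le 1,\ |y|\le 1/N\}$, whose intersection is the segment $[-1,1]\times\{0\}$ but which never equals it. There is no Zariski--Noetherianity to lean on here, because the relevant objects are semi-algebraic, not algebraic.

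What the paper actually does to force finite-dimensionality is a concrete, algebraic construction that your sketch is missing. Let $p$ be the defining polynomial of $S$ as an algebraic interior. \Cref{lem:kernidentity}, using the adjugate $\adj(M(x))$ and the square-freeness of $p$, produces a polynomial vector $\xi\colon\R^n\to\R^d$ of coordinate degree $<d$, with coordinates having gcd coprime to $p$, satisfying $M(x)\xi(x)=q(x)p(x)v$. Since all $\xi(g^{-1}x)$ have coordinates in the fixed finite-dimensional space of polynomials of degree $<d$, their span $W$ is finite-dimensional; choosing a basis $F_1,\ldots,F_m$ of $W$ yields a matrix $B(g)$ with $B(g)F(x)=\xi(g^{-1}x)$, and $V\subset\cV$ is defined as the span of the columns of $B$. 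One then shows directly that for each $a$ in a Zariski-dense subset of $\partial S$ (\Cref{cor:zardense}) the function $\psi(g)=B(g)F(a)$ is a nonzero vector of $V$ in the kernel of $\widetilde M(a)|_V$, so the defining polynomial of $\overline S=\{x:\widetilde M(x)|_V\succeq 0\}$ vanishes on all of $\partial S$; finally \Cref{prop:specrigid} together with \Cref{cor:boundaryvanish} (a rigid-convexity argument) forces $\overline S=S$. The explicit kernel vector from the adjugate and the rigid-convexity machinery are the indispensable ingredients that your proposal neither supplies nor substitutes.
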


The above $\G$-equivariant spectrahedral description for $S$ is built in two steps. First, we define an infinite-dimensional spectrahedral description for $S$, in terms of a $\G$-equivariant linear map $\widetilde M\colon\R^n\to\Sym^2(\cV)$, where $\cV$ is a Hilbert space, see Construction~\ref{con:l2construction}. Second, we find a sufficiently large finite-dimensional subspace $V\subset\cV$ such that $\overline M=\widetilde M|_V$ satisfies $\overline M(x)\succeq 0$ if and only if $M(x)\succeq0$, see Proposition~\ref{thm:finiteequi}. To demonstrate its practical potential, we  carry out this entire construction in three concrete examples, see Examples \ref{ex:constr1}, \ref{ex:constr2}, and \ref{ex:constr3}. For an analogous result concerning finite groups $\mathsf G$, see \cite{Kummer21}.

Our second main result concerns convex sets invariant under a \emph{polar representation}, i.e., an orthogonal action of a compact Lie group $\K$ on $V$ for which there is a vector subspace $\mathfrak a\subset V$ that intersects all $\K$-orbits orthogonally, see Definition~\ref{D:polar}. In this situation, there is a finite group $W\subset \O(\mathfrak a)$, called the Weyl group, such that the orbit spaces $V/\K$ and $\mathfrak a/W$ are isometric. An example is the $\O(n)$-action by conjugation on the space $V=\Sym^2(\R^n)$ of symmetric $n\times n$ matrices.
By the Spectral Theorem, all $\O(n)$-orbits intersect the subspace $\mathfrak a\subset V$ of diagonal matrices orthogonally, and the action of the symmetric group $W=S_n$ permuting entries on $\mathfrak a$ is such that $V/\K\cong \mathfrak a/W$, see Example~\ref{ex:sym}.
By Kostant's Convexity Theorem (see Proposition~\ref{P:1-1} for details), for any polar $\K$-representation $V$, the bijection 
\begin{align}
    \big\{ \K\text{-invariant subsets of } V \big\} &\longleftrightarrow \big\{ W\text{-invariant subsets of } \mathfrak{a} \big\}\nonumber\\
    \mathcal O &\longmapsto \mathcal O\cap \mathfrak{a},\label{eq:bijection-convex} \\
        \K\cdot S &\,\text{\reflectbox{$\longmapsto$}}\; S.\nonumber
\end{align}
maps convex subsets to convex subsets. Thus, it is natural to ask if special classes of invariant convex subsets are preserved by \eqref{eq:bijection-convex}. While we are unable to confirm our expectation that spectrahedra are mapped to spectrahedra (Conjecture~\ref{con:specpolar}), we exhibit classes that contain all spectrahedra which are mapped to one another:

\begin{mainthm}\label{thm:main-polar}
The bijection \eqref{eq:bijection-convex} preserves the following classes of convex subsets:
\begin{enumerate}[\rm (i)]
    \item Convex semi-algebraic sets;
    \item Spectrahedral shadows (i.e., affine-linear images of spectrahedra);
    \item Rigidly convex sets.
\end{enumerate}
\end{mainthm}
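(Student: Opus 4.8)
\medskip
\noindent\emph{Approach.} I would treat each of the two inclusions of the bijection \eqref{eq:bijection-convex} separately; since convexity of the image is already provided by \Cref{P:1-1}, only membership in the given class must be checked each time. The direction $\mathcal O\mapsto\mathcal O\cap\mathfrak a$ is easy and uniform: intersecting with the linear subspace $\mathfrak a$ preserves all three classes. Semi-algebraic sets are closed under intersection with algebraic sets; if $S=\pi(\widehat S)$ with $\widehat S$ a spectrahedron and $\pi$ linear, then $S\cap\mathfrak a=\pi\bigl(\widehat S\cap\pi^{-1}(\mathfrak a)\bigr)$ and the intersection of a spectrahedron with an affine subspace is a spectrahedron; finally, the interior of a $\K$-invariant rigidly convex set $\mathcal O$ is $\K$-invariant and nonempty, hence meets $\mathfrak a$, so after moving the base point $e$ into $\operatorname{int}(\mathcal O)\cap\mathfrak a$ (real-zeroness propagates throughout a rigidly convex set) the restriction to $\mathfrak a$ of a defining real zero (RZ) polynomial is again RZ at $e$, and the relevant connected component restricts correctly because it is convex. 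The forward direction of (i) is also short: a compact Lie group $\K$ carries a canonical structure of real affine algebraic group for which its orthogonal action on $\R^n$ is algebraic, so the action map $\mu\colon\K\times\R^n\to\R^n$ is polynomial and $\K\cdot S=\mu(\K\times S)$ is semi-algebraic by Tarski--Seidenberg.

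For the forward direction of (iii), write $S=\overline{C(q,e_0)}$ with $q$ an RZ polynomial on $\mathfrak a$. Averaging a point of $\operatorname{int}(S)$ over $W$ produces a base point $e\in\mathfrak a^W\cap\operatorname{int}(S)$; since $\mathfrak a^W=V^\K$ for a polar representation, $e$ is $\K$-fixed. Replacing $q$ by $\prod_{w\in W}q\circ w$ — which is still RZ at the $W$-fixed point $e$, and whose rigidly convex set is the intersection over $w$ of those of its factors, i.e.\ $\operatorname{int}(S)$ — we may assume $q\in\R[\mathfrak a]^W$. By the Chevalley restriction theorem for polar representations, restriction $\R[V]^\K\to\R[\mathfrak a]^W$ is an isomorphism, so there is a $\K$-invariant $Q$ on $V$ with $Q|_{\mathfrak a}=q$. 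For $v\in V$, writing $v=kx$ with $x\in\mathfrak a$, $k\in\K$, we get $Q(e+tv)=Q\bigl(k(e+tx)\bigr)=Q(e+tx)=q(e+tx)$, which is real-rooted; hence $Q$ is RZ at $e$. Now $C(Q,e)$ is a $\K$-invariant convex open set with $C(Q,e)\cap\mathfrak a=C(q,e)=\operatorname{int}(S)$, so by \eqref{eq:bijection-convex} it equals $\K\cdot\operatorname{int}(S)$, and passing to closures (using that $\K\cdot S$ is closed, $\K$ being compact) gives $\K\cdot S=\overline{C(Q,e)}$, which is rigidly convex.

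The substantive case is the forward direction of (ii). Let $S\subseteq\mathfrak a$ be a $W$-invariant spectrahedral shadow; recall that $W$ acts on $\mathfrak a$ as a finite reflection group. Fix a Weyl chamber $\overline{\mathfrak a_+}$, with dominant fundamental weights $\varpi_1,\dots,\varpi_p$ spanning $(\mathfrak a^W)^\perp$ and $\varpi_{p+1},\dots,\varpi_r$ an orthonormal basis of $\mathfrak a^W$. For $v\in V$ let $x_v\in\overline{\mathfrak a_+}$ be the dominant representative of $\K\cdot v$. Kostant's theorem (\Cref{P:1-1}) yields, on the one hand, $\langle\varpi_j,x_v\rangle=\max_{k\in\K}\langle\varpi_j,kv\rangle=:h_j(v)$ for every $j$ (so $h_j$ is linear for $j>p$), and on the other hand the dominance monotonicity: for dominant $x,y$ one has $y\in\operatorname{conv}(Wx)$ if and only if $\langle\varpi_j,y\rangle\le\langle\varpi_j,x\rangle$ for $j\le p$ and $\langle\varpi_j,y\rangle=\langle\varpi_j,x\rangle$ for $j>p$. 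Since $S$ is $W$-invariant and convex, $v\in\K\cdot S\iff x_v\in S$; letting $\Sigma_S\subseteq\R^r$ be the image of the spectrahedral shadow $S\cap\overline{\mathfrak a_+}$ under the linear isomorphism $x\mapsto(\langle\varpi_j,x\rangle)_j$, the monotonicity translates this into
\[
\K\cdot S=\bigl\{\,v\in V:\ \exists\,\sigma\in\Sigma_S\ \text{with}\ \sigma_j\ge h_j(v)\ (j\le p)\ \text{and}\ \sigma_j=h_j(v)\ (j>p)\,\bigr\}.
\]
Thus $\K\cdot S$ is the linear projection of the intersection of a spectrahedral shadow (the pullback of $\Sigma_S$), finitely many affine conditions, and the epigraphs of the convex functions $h_1,\dots,h_p$.

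Consequently $\K\cdot S$ is a spectrahedral shadow \emph{provided each $h_j$ has a spectrahedral-shadow epigraph}, equivalently provided each orbitope $\operatorname{conv}(\K\cdot\varpi_j)$ is a spectrahedral shadow. In the motivating example $\K=\O(n)$ acting on $\Sym^2(\R^n)$ this holds: $\operatorname{conv}(\K\cdot\varpi_j)=\{Z:0\preceq Z\preceq I,\ \tr Z=j\}$ is even a spectrahedron and $h_j(X)=\lambda_1(X)+\dots+\lambda_j(X)$ is Fan's SDP-representable function, so the argument goes through verbatim. I expect that establishing the spectrahedral-shadow property of the orbitopes $\operatorname{conv}(\K\cdot\varpi_j)$ in general — or finding a way to avoid it — will be the main obstacle; a natural route is Dadok's identification of polar representations with isotropy representations of symmetric spaces, under which $\K\cdot\varpi_j$ becomes a real flag manifold and $h_j$ a Ky Fan-type variational quantity.
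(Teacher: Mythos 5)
Your treatment of the backward direction $\mathcal O\mapsto\mathcal O\cap\mathfrak a$ is fine for all three classes, and for (i) and (iii) your forward direction is essentially sound, though the routes differ from the paper's in a couple of places. For (i) the paper does not appeal to the algebraic-group structure of $\K$ and Tarski--Seidenberg; instead it invokes Br\"ocker's theorem that a $W$-invariant semi-algebraic set admits a description in $W$-invariant polynomials and then transports these through the Chevalley isomorphism $\R[V]^\K\cong\R[\mathfrak a]^W$. Your argument is more direct and equally valid, at the cost of needing to know that a compact Lie group acts algebraically; the paper's argument is more uniform with the other two parts. For (iii) the paper takes a shortcut you could borrow: since the defining polynomial $p$ of $S$ is unique up to positive scalar (\Cref{lem:defpoly}) and $S$ is $W$-invariant, $p$ is automatically $W$-invariant, so you do not need the product $\prod_{w\in W}q\circ w$ (which in any case destroys minimality of degree and forces you to re-extract the defining polynomial). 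From there the two arguments coincide, both resting on the Chevalley extension and the observation that the real-zero property for $\widetilde p$ at a $\K$-fixed interior point reduces to that of $p$ on the section. Your intermediate claim $\mathfrak a^W=V^\K$ is correct for polar representations and is implicitly used by the paper as well when it assumes $0\in S^\circ$ ``without loss of generality.''

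Part (ii) is where the genuine gap lies, and you identified it yourself. Your reduction via the dominance order and the support functions $h_j(v)=\max_{k\in\K}\langle\varpi_j,kv\rangle$ is a correct reformulation: $\K\cdot S$ becomes a linear projection of a set cut out by the spectrahedral shadow $\Sigma_S$, finitely many affine conditions, and the epigraphs of the $h_j$. But the assertion that those epigraphs are spectrahedral shadows, equivalently that the orbitopes $\operatorname{conv}(\K\cdot\varpi_j)$ are spectrahedral shadows, is exactly the content of Kobert and Scheiderer's theorem~\cite{KS22} (they prove these polar orbitopes are in fact spectrahedra, with an explicit representation $y\in\mathcal O_x\iff c_j(x)\Id\succeq\rho^j(y)$ for suitable representations $\rho^j$). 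The paper uses precisely this input: it writes $\K\cdot S=\bigcup_{x\in S\cap C}\mathcal O_x$, substitutes the KS linear-matrix-inequality description of $\mathcal O_x$, uses that $c_j$ is linear on the chamber $C$, and projects. So your strategy is the same modulo dualization, but without the KS theorem it does not close, and you offer no alternative path to it; the $\O(n)$-on-$\Sym^2(\R^n)$ case that you verify via Fan's theorem is the special case already handled in~\cite{SS22}. Separately, your dominance/Weyl-chamber argument implicitly assumes $W$ is crystallographic, which requires $\K$ connected; the paper first reduces to that case by observing $\K\cdot S=\operatorname{conv}\bigl(\bigcup_i g_i\K_0\cdot S\bigr)$ and invoking the closure of spectrahedral shadows under convex hulls of finite unions~\cite{NS09,HN10}. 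You would need to add that step as well.
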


For a definition of \emph{rigidly convex} set, see Section~\ref{sec:prelim}. According to the Generalized Lax Conjecture~\ref{conj:glc}, all rigidly convex sets are spectrahedra. While this remains a central open problem in Convex Algebraic Geometry, if true, it would imply by Theorem~\ref{thm:main-polar} (iii) that the bijection \eqref{eq:bijection-convex} preserves the class of spectrahedra.
Our proof of Theorem~\ref{thm:main-polar}~(ii) heavily relies on recent work of Kobert and Scheiderer~\cite{KS22}. Earlier work by Sanyal and Saunderson~\cite[Thm.~4.1]{SS22} established Theorem~\ref{thm:main-polar}~(ii) in the special case of the $\O(n)$-action by conjugation on $\Sym^2(\R^n)$. 

In light of \eqref{eq:bijection-convex}, a convex optimization problem whose feasible set $\mathcal O\subset V$ is invariant under a polar $\K$-representation can be reduced to a convex optimization problem with fewer variables, on the $W$-invariant feasible set $\mathcal O\cap \mathfrak a$, see Section~\ref{sec:optimization}. Although this reduction improves efficiency in practice, by Theorem~\ref{thm:main-polar}~(ii), it does not enlarge the class of problems that can be solved with semidefinite programming.

This paper is organized as follows. In Section~\ref{sec:prelim}, we give the necessary background from Convex Algebraic Geometry. Theorem~\ref{thm:main-equivdescrp} is proved in Section~\ref{sec:proofthma}. Relevant background on polar representations is given in Section~\ref{sec:prelimpolar}, and Theorem~\ref{thm:main-polar} is proved in Section~\ref{sec:proofthmb}. In Section~\ref{sec:optimization}, we discuss \eqref{eq:bijection-convex} in the context of convex~optimization.

\subsection{Acknowledgements}
Renato G.~Bettiol is supported by the National Science Foundation CAREER grant DMS-2142575. Mario Kummer is supported by the Deutsche Forschungsgemeinschaft under Grant No. 502861109. Ricardo A.~E.~Mendes is supported by NSF grant DMS-2005373 and the Dodge Family College of Arts and Sciences Junior Faculty Summer Fellowship of the University of Oklahoma.

\section{Preliminaries from Convex Algebraic Geometry}\label{sec:prelim}

In this section, we recall some basic facts about spectrahedra, spectrahedral shadows, and rigidly convex sets; more comprehensive references on the subject are, e.g., \cite{SIAMbook,netzer-plaumann,pastpresentfuture}.
We denote by $\Sym^2(\R^d)$ the vector space of $d\times d$ real symmetric matrices, and by $\Id_d\in \Sym^2(\R^d)$ the identity matrix. Given $A,B\in\Sym^2(\R^d)$, we write $A\succeq B$ to indicate that $A-B$ is positive-semidefinite.

\subsection{Spectrahedra and their shadows}
Let us recall the definitions of spectrahedron and spectrahedral shadow, and make a few initial observations about them.

\begin{Def}
 A set $S \subset \R^n$ is called a \emph{spectrahedron} if there exists an affine-linear map $M\colon\R^n\to\Sym^2(\R^d)$, i.e., $M(x)=M_0+x_1 M_1+\dots +x_n M_n$, where $M_j\in\Sym^2(\R^d)$, such that $S=\{x\in\R^n : M(x)\succeq0\}$. We say that $M$ is \emph{monic} if $M(0)=\Id_d$.
\end{Def}

\begin{rem}\label{rem:monic}
  A spectrahedron can be described by $M(x)\succeq0$ for a monic affine-linear map $M$ if and only if its interior contains the origin, see \cite[Lemma~2.12]{netzer-plaumann}.  
\end{rem}

Differently from polyhedra, images of spectrahedra under affine-linear maps need not be spectrahedra, which motivates the following definition.
  
\begin{Def}
 A set $S\subset\R^n$ is called a \emph{spectrahedral shadow} if there exists a spectrahedron $S'\subset\R^m$ and an affine-linear map $f\colon\R^m\to\R^n$ such that $S=f(S')$.
\end{Def}

Note that if $M\colon \R^n\to\Sym^2(\R^d)$ is a monic affine-linear map, then $p(x)=\det(M(x))$ satisfies $p(0)=1$ and $p$ vanishes on the boundary of the spectrahedron $S=\{x\in\R^n : M(x)\succeq0\}$. Moreover, for any fixed $0\neq w\in\R^n$, the univariate polynomial $t\mapsto p(tw)$ has only real zeros, since the matrices $M(tw)$ are symmetric. 

\begin{Def}
 Let $p\in\R[x]$ be a polynomial and $x_0\in\R^n$ be a point such that $p(x_0)>0$. We denote by $\cC_p(x_0)$ the closure of the connected component of the set
  \begin{equation*}
  \{x\in\R^n : p(x)>0\}
 \end{equation*}
 which contains $x_0$. A set $S\subset\R^n$ is called an \emph{algebraic interior} if there exists $x_0\in\R^n$ and a polynomial $p\in\R[x]$ such that $p(x_0)>0$ and $S=\cC_p(x_0)$.
\end{Def}

Spectrahedra $S$ with nonempty interior $S^\circ\neq\emptyset$ are algebraic interiors \cite[\S2]{pastpresentfuture}; in fact, we will recall in Proposition~\ref{prop:specrigid} that an even stronger property holds.

We now collect some auxiliary results about algebraic interiors that will be useful in later sections to prove Theorem~\ref{thm:main-equivdescrp} in the Introduction. 

\begin{lem}[Lemma~2.1 in \cite{HV07}]\label{lem:defpoly}
 Let $S\subset\R^n$ be an algebraic interior and let $p\in\R[x]$ be a polynomial of minimal degree such that $S=\cC_p(x_0)$ for some $x_0\in\R^n$. Then $p$ is unique (up to multiplication by positive constants) and, for every $q\in\R[x]$ with $S=\cC_q(x'_0)$ for some $x'_0\in\R^n$, we have $q=p\,h$ for some $h\in\R[x]$.
\end{lem}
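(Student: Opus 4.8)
The plan is to establish first the divisibility assertion — that $S=\cC_q(x'_0)$ forces $q=p\,h$ for some $h\in\R[x]$ — and then deduce the uniqueness statement from it: if $q$ also has minimal degree, comparing degrees shows $h$ is a nonzero constant, and $h>0$ because $p$ and $q$ are positive on suitable nonempty open subsets of $S$ whose closures equal $S$ (namely the relevant components of $\{p>0\}$ and $\{q>0\}$), hence at a common point. We may assume $S\neq\R^n$, so that $p$ is nonconstant and $\partial S\neq\emptyset$, the case $S=\R^n$ being trivial.

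Two facts drive the argument. First, the topological boundary $\partial S$ lies in the real zero set of \emph{every} defining polynomial: if $S=\cC_q(x'_0)$ and $U_q$ is the connected component of $\{q>0\}$ containing $x'_0$, then $S=\overline{U_q}$ and $\partial S\subseteq\partial U_q\subseteq\{q=0\}$, since $U_q$ is open and closed in the open set $\{q>0\}$. Second, a minimal-degree defining polynomial $p$ cannot shed a factor. I would show (a) that $p$ is squarefree, by noting that the component $U$ of $\{p>0\}$ containing $x_0$ is a connected component of $\{p\neq 0\}=\{p_{\mathrm{red}}\neq 0\}$, on which $\varepsilon\,p_{\mathrm{red}}>0$ for a suitable sign $\varepsilon$, so $\cC_{\varepsilon p_{\mathrm{red}}}(x_0)=\overline U=S$ and minimality forces $\deg p_{\mathrm{red}}=\deg p$; and (b) that, writing $p=p_1\cdots p_r$ with the $p_i$ distinct and irreducible, each complex hypersurface $V_\C(p_i)$ is contained in the Zariski closure $\overline{\partial S}^{\,\mathrm{Zar}}$, since otherwise $p/p_i$ would still cut out $S$ near $x_0$, contradicting minimality. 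Granting (b) gives $V_\C(p)=\overline{\partial S}^{\,\mathrm{Zar}}$; hence any $q$ with $S=\cC_q(x'_0)$, vanishing on $\partial S$, vanishes on all of $V_\C(p)$, so it vanishes on each irreducible $V_\C(p_i)$ and therefore $p_i\mid q$. As the $p_i$ are pairwise non-associate, $p=p_1\cdots p_r$ divides $q$.

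The main obstacle is item (b): the precise bookkeeping showing minimality prevents deleting an irreducible factor, together with the complementary passage from ``$q$ vanishes on $\partial S$'' to ``$q$ vanishes on $V_\C(p_i)$''. Both rest on the same real-algebraic input: $\partial S$, being the topological boundary of a nonempty proper open subset of $\R^n$, contains a point $y$ near which $\partial S$ is a smooth $(n-1)$-dimensional submanifold and $V_\C(p)$ is smooth of dimension $n-1$; then $y$ lies on a unique factor $V_\C(p_i)$, the germ of $\partial S$ at $y$ coincides with that of $V_\R(p_i)$, and the latter is Zariski-dense in $V_\C(p_i)$. This makes $q$ vanish on a Zariski-dense subset of $V_\C(p_i)$, yielding $p_i\mid q$; and for a factor $p_j$ whose real hypersurface does \emph{not} meet $\partial S$ in an $(n-1)$-dimensional piece, deleting it changes neither the connected component of the positivity set through $x_0$ nor its closure, contradicting minimality of $p$. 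The delicate steps I expect to require the most care are controlling how the relevant component can grow when one passes from $\{p>0\}$ to $\{p/p_j>0\}$ across $V(p_j)$, and guaranteeing the existence of enough smooth points of $\partial S$; these I would handle with the standard stratification and Zariski-density facts for semialgebraic sets and real hypersurfaces.
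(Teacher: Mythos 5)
The paper offers no proof of this lemma---it is quoted verbatim from \cite{HV07}---and your reconstruction follows the same route as the original argument: show that a minimal-degree defining polynomial $p$ is square-free and that the complex zero set of $p$ equals the Zariski closure of $\partial S$ (precisely the two facts the paper later extracts in \Cref{lem:propsdefp}), then deduce divisibility and, by comparing degrees and signs, uniqueness. The one step you flag as delicate does close the way you expect: if $\partial S\cap\{p_j=0\}$ has dimension at most $n-2$, then the connected component $U'$ of $\{\varepsilon\, p/p_j>0\}$ through $x_0$ meets $\partial S$ only inside that set (since $p/p_j\neq 0$ on $U'$ and $\partial S\subset\{p=0\}$), and because deleting a closed semi-algebraic subset of codimension at least $2$ cannot disconnect the open connected set $U'$, the partition $U'\setminus\partial S=(U'\cap S^{\circ})\sqcup(U'\setminus S)$ forces $U'\subset S$, hence $\overline{U'}=S$ and $\deg p$ was not minimal; your phrase that the component itself is unchanged is slightly too strong (it may absorb codimension-$\geq 2$ slits inside $S^{\circ}$), but only the closure matters.
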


The polynomial $p$ in Lemma~\ref{lem:defpoly} is called the \emph{defining polynomial} of $S$. 

\begin{lem}\label{lem:propsdefp}
 Let $S\subset\R^n$ be an algebraic interior with defining polynomial $p$. Then $p$ is \emph{square-free}, in the sense that $p$ has no repeated irreducible factors. Moreover, the complex zero set of $p$ is the Zariski-closure of the boundary of $S$.
 In particular, the set of real zeros of $p$ lies Zariski-dense in its complex zero set.
\end{lem}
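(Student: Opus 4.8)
The plan is to deduce both assertions from the minimality of $\deg p$ together with \Cref{lem:defpoly}. First I would show that $p$ is square-free. Suppose not, so $p = q^2 r$ with $q$ nonconstant. Since $\cC_p(x_0)$ is the closure of a connected component $U$ of $\{p>0\}$, and on $U$ we have $p>0$, the polynomial $\widetilde p := q r$ satisfies $\widetilde p = p/q^2$, hence $\widetilde p > 0$ on $U$ as well (where $q\neq 0$), and $\{\widetilde p > 0\} \supseteq \{p>0\}$ since $q^2\geq 0$. Thus the connected component of $\{\widetilde p>0\}$ containing $x_0$ contains $U$; I need the reverse inclusion of closures, i.e.\ that $\cC_{\widetilde p}(x_0) = \cC_p(x_0) = S$. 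This is the one point requiring a little care: a priori passing to $\widetilde p$ could enlarge the component across the hypersurface $\{q=0\}$. The key observation is that the boundary of $S$ is contained in $\{p = 0\}$, and one argues that the irreducible components of $\{p=0\}$ that actually meet $\overline U$ along a set of codimension $1$ (hence bound $U$) each occur in $\widetilde p$ as well — so $\{\widetilde p = 0\}$ still separates $U$ from the rest of $\mathbb R^n$ near $\partial S$. Hence $S=\cC_{\widetilde p}(x_0)$ with $\deg \widetilde p < \deg p$, contradicting minimality. Therefore $p$ is square-free.

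Next I would identify the complex zero set $\mathcal Z_{\mathbb C}(p)$ with the Zariski closure $\overline{\partial S}^{\,\mathrm{Zar}}$ of the boundary. The inclusion $\overline{\partial S}^{\,\mathrm{Zar}} \subseteq \mathcal Z_{\mathbb C}(p)$ is immediate: $p$ vanishes on $\partial S$ (each boundary point is a limit of points where $p>0$ and of points where $p\le 0$, or lies outside the open component, so by continuity $p=0$ there), and $\mathcal Z_{\mathbb C}(p)$ is Zariski closed. For the reverse inclusion, factor $p = p_1\cdots p_k$ into distinct irreducibles (using square-freeness). If some $\mathcal Z_{\mathbb C}(p_i)$ were \emph{not} contained in $\overline{\partial S}^{\,\mathrm{Zar}}$, then since $\mathcal Z_{\mathbb C}(p_i)$ is irreducible it would meet $\overline{\partial S}^{\,\mathrm{Zar}}$ only in a proper subvariety, so the product $q := \prod_{j\ne i} p_j$ would still vanish on all of $\partial S$. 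I would then show $S = \cC_q(x_0)$: indeed $\{q \ne 0\} \supseteq \{p\ne 0\}$ off the lower-dimensional set $\mathcal Z(p_i)$, the real points of $\mathcal Z(p_i)$ form a set of real codimension $\ge 1$ which cannot disconnect the interior $S^\circ$ further (a hypersurface not contained in $\overline{\partial S}$ does not locally separate the open connected set $S^\circ$ from itself — here one uses that $S^\circ$ is connected and that removing a proper subvariety from an open connected semialgebraic set leaves it connected), and $q$ has constant sign on $S^\circ$; a brief argument shows that sign is positive and that the closure of the component of $\{q>0\}$ through $x_0$ is exactly $S$. This again contradicts minimality of $\deg p$. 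Hence every $\mathcal Z_{\mathbb C}(p_i)$, and so $\mathcal Z_{\mathbb C}(p)$, lies in $\overline{\partial S}^{\,\mathrm{Zar}}$, giving equality.

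Finally, the ``in particular'' follows formally: $\partial S \subseteq \mathbb R^n \cap \mathcal Z_{\mathbb C}(p) =: \mathcal Z_{\mathbb R}(p) \subseteq \mathcal Z_{\mathbb C}(p)$, and taking Zariski closures, $\overline{\mathcal Z_{\mathbb R}(p)}^{\,\mathrm{Zar}}$ sits between $\overline{\partial S}^{\,\mathrm{Zar}} = \mathcal Z_{\mathbb C}(p)$ and $\mathcal Z_{\mathbb C}(p)$, so it equals $\mathcal Z_{\mathbb C}(p)$; that is, the real zeros are Zariski-dense among the complex ones. I expect the main obstacle to be making rigorous the two ``does not disconnect'' claims — that discarding a squared factor, or a factor whose zero set does not lie in $\overline{\partial S}$, does not change the relevant connected component $\cC_{(\cdot)}(x_0)$. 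This is where one invokes that a real algebraic hypersurface of codimension $\ge 1$ (or the real locus of a complex hypersurface not contained in the Zariski closure of $\partial S$) cannot locally separate $\mathbb R^n$, together with connectedness of the semialgebraic interior $S^\circ$; everything else is bookkeeping with unique factorization and \Cref{lem:defpoly}.
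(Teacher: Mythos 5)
The paper's own proof is a pure citation to Helton--Vinnikov \cite{HV07}: the square-free observation is made in the last paragraph of p.~657 there, and the identification of the complex zero set of $p$ with the Zariski closure of $\partial S$ appears in the course of the proof of their Lemma~2.1. You instead attempt a self-contained argument from minimality of $\deg p$; the overall strategy (discard a squared or extraneous factor and show the defining connected component does not change) is reasonable, but the two steps you yourself flag as delicate do not hold in the form stated, so there are genuine gaps rather than merely omitted routine details.

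Concretely: (a) if $p=q^2r$ then $qr=p/q$, not $p/q^2$ (the latter is $r$), and the inclusion $\{qr>0\}\supseteq\{p>0\}$ is false, since on $\{p>0\}$ the sign of $qr$ is the sign of $q$, which need not be positive; taking $r$ instead restores the inclusion but then the component through $x_0$ genuinely can grow across $\{q=0\}$, which is exactly the difficulty. (b) The non-separation principles you invoke are false as literally stated: a real algebraic hypersurface of real codimension one can and typically does disconnect an open connected set, so neither ``removing a proper subvariety from an open connected semialgebraic set leaves it connected'' nor ``a hypersurface not contained in the Zariski closure of $\partial S$ does not locally separate $S^\circ$'' is available without first establishing that the removed set has real codimension at least two near the relevant region, or without a sign-change argument along it; establishing this is the actual substance of the lemma. (c) In the second part you deduce that $q:=\prod_{j\ne i}p_j$ vanishes on all of $\partial S$ from the assumption that the complex zero set of $p_i$ meets the Zariski closure of $\partial S$ only in a proper subvariety; this does not follow, as there could be boundary points at which $p_i$ alone vanishes, and the hypothesis only says such points are not Zariski-dense in the zero set of $p_i$, not that they are absent. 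A repaired argument would have to replace (b) and (c), for instance by exploiting that each $\{p_i=0\}$ is disjoint from the defining connected component $U$ of $\{p>0\}$ (not merely from $S^\circ$) and by controlling the local real dimension of $\{p_i=0\}$ near $\partial U$; the paper sidesteps all of this by deferring to \cite{HV07}.
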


\begin{proof}
 The fact that $p$ is square-free was observed in the last paragraph of \cite[p.~657]{HV07}. It was also observed in the course of the proof of \cite[Lemma~2.1]{HV07} that the complex zero set of $p$ is the Zariski-closure of the boundary of $S$. This implies that the set of real zeros of $p$ lies Zariski-dense in its complex zero set.
\end{proof}

\begin{cor}\label{cor:zardense}
 Let $S\subset\R^n$ be an algebraic interior with defining polynomial $p$. Let $F_1,\ldots,F_m$ be polynomials whose greatest common divisor is coprime to $p$. Then
 \begin{equation*}
  \partial S\setminus\{a\in\partial S : F_1(a)=\cdots=F_m(a)=0\}
 \end{equation*}
 is Zariski-dense in $\partial S$.
\end{cor}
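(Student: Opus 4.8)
The plan is to argue by contradiction, reducing everything to a divisibility statement in the polynomial ring via the Nullstellensatz. Recall from \Cref{lem:propsdefp} that $p$ is square-free and that the Zariski closure of $\partial S$ equals the complex zero set $\mathcal{Z}(p):=\{z\in\C^n : p(z)=0\}$; since $p$ is square-free, $(p)$ is a radical ideal, so $I(\mathcal{Z}(p))=(p)$ by the Nullstellensatz. Consequently, for any polynomial $h$ one has $h|_{\partial S}\equiv 0$ if and only if $p\mid h$ (for ``$\Leftarrow$'', $h$ vanishes on $\mathcal{Z}(p)\supseteq\partial S$; for ``$\Rightarrow$'', $h$ vanishes on the Zariski closure $\mathcal{Z}(p)$, hence $h\in I(\mathcal{Z}(p))=(p)$). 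Now write $Z:=\{a\in\R^n : F_1(a)=\cdots=F_m(a)=0\}$, so the set in the statement is $\partial S\setminus Z$. If it were not Zariski-dense in $\partial S$, there would be a polynomial $h$ with $h|_{\partial S\setminus Z}\equiv 0$ but $p\nmid h$.

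From such an $h$ I would extract a contradiction as follows. For every $a\in\partial S$, either $a\notin Z$, so $h(a)=0$, or $a\in Z$, so $F_i(a)=0$ for all $i$; in either case $h(a)F_i(a)=0$. Hence $hF_i$ vanishes identically on $\partial S$ for each $i$, and therefore $p\mid hF_i$ for all $i$ by the criterion above. Factoring $p=q_1\cdots q_s$ into distinct irreducibles in $\C[x]$, for each $j$ the prime $q_j$ divides $hF_i$; if $q_j\nmid h$ then $q_j\mid F_i$ for every $i$, so $q_j$ is a common divisor of $p$ and of all the $F_i$, contradicting the hypothesis that $\gcd(F_1,\dots,F_m)$ is coprime to $p$. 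Thus $q_j\mid h$ for every $j$, whence $p\mid h$, contradicting the choice of $h$. This proves $\partial S\setminus Z$ is Zariski-dense in $\partial S$.

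The only point requiring a little care — and the one I would double-check — is the passage between the real and complex settings: that ``Zariski-dense in $\partial S$'' is faithfully recorded by density in the complex variety $\mathcal{Z}(p)$ (which is exactly the content of the last sentence of \Cref{lem:propsdefp}), and that the coprimality of $p$ and $\gcd(F_1,\dots,F_m)$ — a statement about real polynomials — persists after extending scalars to $\C$, so that no non-constant $q_j\in\C[x]$ can divide $p$ together with all the $F_i$. Both are standard, the latter because greatest common divisors are unchanged under field extension; everything else is the elementary unique-factorization argument above.
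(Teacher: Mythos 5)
Your proof is correct, and it reaches the same conclusion by a route that differs from the paper's in flavor more than in substance. The paper decomposes the complex zero set $\mathcal Z(p)=V_1\cup\cdots\cup V_r$ into irreducible components, shows each $S_i=V_i\cap\partial S$ is Zariski-dense in the irreducible variety $V_i$ (via \Cref{lem:propsdefp} and the Nullstellensatz), observes that removing $\mathcal Z(F_1,\dots,F_m)$ cuts out a nonempty Zariski-open of $V_i$ (coprimality ensures nonemptiness), and concludes density component by component. You instead run a single global contradiction: a hypothetical $h$ vanishing on $\partial S\setminus Z$ with $p\nmid h$ forces $hF_i$ to vanish on all of $\partial S$, hence $p\mid hF_i$ by $I(\mathcal Z(p))=(p)$, and unique factorization together with the coprimality hypothesis yields $p\mid h$, a contradiction. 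Both arguments rest on the same three pillars — square-freeness of $p$ (so $(p)$ is radical), the Nullstellensatz, and the decomposition of $p$ into irreducibles — but yours is phrased entirely in terms of ideal membership and UFD arithmetic, whereas the paper's is phrased geometrically via irreducible components. The one subtlety you correctly flag and resolve is that $\gcd$ of real polynomials is unchanged under extension of scalars to $\C$, so the coprimality hypothesis can be applied to the complex irreducible factors $q_j$ of $p$.
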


\begin{proof}
 Consider the decomposition $p=p_1\cdots p_r$ of $p$ into irreducible factors. Let $V_i$ be the complex zero set of $p_i$. Since the complex zero set of $p$ is the Zariski-closure of the boundary of $S$, we have that $S_i=V_i\cap\partial S$ is Zariski-dense in $V_i$. Thus, by Hilbert's Nullstellensatz, every polynomial that vanishes on $S_i$ must be divisible by $p_i$. Therefore, $S_i\setminus\{a\in\partial S : F_1(a)=\cdots=F_m(a)=0\}$ is nonempty open and hence Zariski-dense in $S_i$, which implies the claim.
\end{proof}

Finally, the following result is key in the proof of Theorem~\ref{thm:main-equivdescrp} in the Introduction, to refine infinite-dimensional equivariant spectrahedral representations 
into finite-dimensional ones, see Proposition~\ref{thm:finiteequi}.

\begin{lem}\label{lem:kernidentity}
 Let $M\colon \R^n\to \Sym^2(\R^d)$ be a monic affine-linear map, and $p\in\R[x]$ be the defining polynomial of $S=\{x\in\R^n : {M}(x)\succeq0\}$ as an algebraic interior. There exists a map $\xi\colon\R^n\to \R^d$ whose coordinates are polynomials of degree less than $d$ with greatest common divisor coprime to $p(x)$ such that 
\begin{equation*}
 M(x)\, \xi(x)=q(x)\, p(x)\, v
\end{equation*}
 for some $q\in\R[x]$, and $v\in\R^d$.
\end{lem}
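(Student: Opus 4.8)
The plan is to produce $\xi$ from the columns of the adjugate matrix. Since $M$ is affine-linear, the entries of $\adj M(x)$ are $(d-1)\times(d-1)$ minors of $M(x)$, hence polynomials of degree less than $d$, and $\adj M(0)=\adj(\Id_d)=\Id_d$, so $\adj M$ is not identically zero. Write $\adj M(x)=g(x)\,B(x)$, where $g\in\R[x]$ is the greatest common divisor of the entries of $\adj M$ and $B(x)$ has polynomial entries (still of degree less than $d$) whose gcd is $1$. The $(1,1)$-entry of the identity $g\cdot\big(M(x)B(x)\big)=\det(M(x))\,\Id_d$ shows $g\mid\det M$, so $h:=\det(M)/g$ is a polynomial, and cancelling $g$ (in the domain $\R[x]$) gives $M(x)B(x)=h(x)\,\Id_d$. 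Hence, for any $w\in\R^d$, the vector $\xi(x):=B(x)\,w$ satisfies $M(x)\,\xi(x)=h(x)\,w$. It then suffices to show (a) that $p\mid h$, so that one may set $q:=h/p$ and $v:=w$, and (b) that $w$ can be chosen so that the coordinates of $\xi$ have gcd coprime to $p$.

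For (a), I would first observe that $\det M$ vanishes on $\partial S$: at a boundary point $M$ is positive-semidefinite but not positive-definite (otherwise $M\succ0$ on a whole neighborhood, contradicting that the point lies in $\partial S$), hence singular. By \Cref{lem:propsdefp}, $\partial S$ is Zariski-dense in the complex zero set of $p$, so $\det M$ vanishes on that zero set, and since $p$ is square-free, Hilbert's Nullstellensatz yields $p\mid\det M$; in particular every irreducible factor $p_i$ of $p$ divides $\det M$. Now fix such a $p_i$ and a point $z$ in its complex zero set: then $\det M(z)=0$, so $M(z)$ is singular, while $M(z)B(z)=h(z)\,\Id_d$ would be invertible if $h(z)\neq 0$ — a contradiction. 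Thus $h$ vanishes on the zero set of the prime $p_i$, so $p_i\mid h$, and since $p$ is square-free, $p\mid h$.

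For (b), note that for each irreducible factor $p_i$ of $p$ the set $L_i=\{w\in\R^d : p_i\mid (B(x)w)_k \text{ for all } k\}$ is an $\R$-linear subspace, and it is proper: $L_i=\R^d$ would force $p_i$ to divide every entry of $B$, contradicting that the entries of $B$ have gcd $1$. Since $\R^d$ is not a finite union of proper subspaces, a generic $w$ lies in no $L_i$, and for such $w$ the coordinates of $\xi=B(x)w$ have gcd coprime to $p$; with this $w$, taking $q=h/p$ and $v=w$ finishes the proof. I expect step (a) to be the crux — specifically, the point that $p$ itself, and not merely some factor of $\det M$, divides $h$ — which is exactly where square-freeness of $p$ (via \Cref{lem:propsdefp}) and the Nullstellensatz are needed; everything else is routine manipulation of the adjugate.
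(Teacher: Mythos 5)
Your proof is correct, and it reaches the conclusion by a route that differs from the paper's in its key step. Both arguments build $\xi$ from the adjugate matrix applied to a generic vector, but the paper divides $\adj(M(x))$ by the specific product $p_1^{m_1-1}\cdots p_r^{m_r-1}$ (where $m_i$ is the multiplicity of $p_i$ in $\det M$), invoking \cite[Prop.~4]{2results} to know that this is exactly the gcd of $p_i^{m_i}$ with the entries of $\adj(M(x))$; that cited result is what simultaneously guarantees the quotient is a polynomial matrix, that the remaining entries have gcd coprime to $p$, and that $p$ divides $\det(M)/\prod p_i^{m_i-1}$. You instead divide by the \emph{full} gcd $g$ of the entries of $\adj(M(x))$, which makes coprimality of the entries of $B=\adj(M)/g$ automatic, and then establish the needed divisibility $p\mid h=\det(M)/g$ from scratch: $p\mid\det M$ via singularity of $M$ on $\partial S$, Zariski-density of $\partial S$ in $Z(p)$ (\Cref{lem:propsdefp}), square-freeness of $p$, and the Nullstellensatz; and then $p_i\mid h$ because $M(z)B(z)=h(z)\Id_d$ with $M(z)$ singular forces $h(z)=0$ on $Z(p_i)$. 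This buys a self-contained argument that avoids the external structural result about adjugates of symmetric matrix polynomials; the paper's version is a bit shorter once that result is taken as given. The genericity argument (a generic $v$ or $w$ avoids the finitely many proper subspaces on which some $p_i$ would divide all coordinates of $\xi$) is essentially identical in both proofs, and your bookkeeping on degrees (entries of $\adj M$ are $(d-1)\times(d-1)$ minors, hence of degree $<d$) is fine.
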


\begin{proof}
 Consider the decomposition $p=p_1\cdots p_r$ of $p$ into irreducible factors. Then, the real zeros of each factor $p_i$ lie Zariski-dense in its complex zero set by Lemma~\ref{lem:propsdefp}. For each $i=1,\ldots,r$ let $m_i\in\N$ be such that $\det(M(x))$ is divisible by $p_i(x)^{m_i}$ but not by $p_i(x)^{m_i+1}$. In particular, we can write
\begin{equation*}
 \det(M(x))=q(x)\, p(x)\, p_1(x)^{m_i-1}\cdots p_r(x)^{m_r-1}
\end{equation*}
for some $q\in\R[x]$.
By \cite[Prop.~4]{2results}, the greatest common divisor of $p_i(x)^{m_i}$ and all entries of the adjugate matrix $\adj(M(x))$ is $p_i(x)^{m_i-1}$. Thus, for a generic $v\in\R^d$, the coordinates of the vector
\begin{equation}\label{eq:xi}
 \xi(x)=\frac{1}{p_1(x)^{m_i-1}\cdots p_r(x)^{m_r-1}}\,\adj(M(x))\, v
\end{equation}
are polynomials of degree less than $d$ whose greatest common divisor is coprime to $p(x)$. Therefore,
\begin{equation*}
 M(x)\, \xi(x)=\frac{\det(M(x))}{p_1(x)^{m_i-1}\cdots p_r(x)^{m_r-1}}\, v = q(x)\, p(x)\, v.\qedhere
\end{equation*}
\end{proof}

\subsection{Rigidly convex sets} 
We now introduce real zero polynomials and rigidly convex sets, see \cite[\S 2]{pastpresentfuture} for details. 

\begin{Def}
 A polynomial $p\in\R[x]$ is called \emph{real zero} with respect to $u\in\R^n$ if $p(u)>0$ and if for every $0\neq w\in\R^n$ the univariate polynomial $p(u+tw)\in\R[t]$ has only real zeros.
\end{Def}

\begin{Def}\label{def:rigid-conv}
 An algebraic interior $S\subset\R^n$ is called \emph{rigidly convex} if its defining polynomial $p$ is real zero with respect to some interior point $u\in S^\circ$.
\end{Def}

The following foundational result from \cite{HV07} shows that the word ``some'' in Definition~\ref{def:rigid-conv} can be replaced with ``every'', and justifies the alluded convexity.

\begin{prop}[\cite{HV07}, see Theorem~6.3 in \cite{SIAMbook}]\label{prop:rzprops}
 Let $S\subset\R^n$ be rigidly convex and $p$ be its defining polynomial as algebraic interior. Then:
 \begin{enumerate}[\rm (i)]
  \item $p$ is real zero with respect to every $u\in S^\circ$.
  \item $S$ is convex.
 \end{enumerate}
\end{prop}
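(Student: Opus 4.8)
The plan is to reduce the statement to the theory of hyperbolic polynomials and apply G\r{a}rding's theorem. Let $\delta=\deg p$ and let $P\in\R[x_0,x_1,\dots,x_n]$ be the degree-$\delta$ homogenization of $p$, so that $P(1,x)=p(x)$. The first step is the standard dictionary: for $u\in\R^n$ with $p(u)>0$, the polynomial $p$ is real zero with respect to $u$ if and only if $P$ is \emph{hyperbolic} with respect to $e_u:=(1,u)$, meaning $P(e_u)=p(u)>0$ and $s\mapsto P(z-s\,e_u)$ has only real roots for every $z\in\R^{n+1}$. Indeed, writing $z=(z_0,y)$ and $w:=y-z_0u$, one has for $s\neq z_0$
\[
 P\big(z-s\,e_u\big)=(z_0-s)^\delta\, p\!\left(u+\tfrac{w}{\,z_0-s\,}\right),
\]
so the substitution $\tau=(z_0-s)^{-1}$ puts the roots in $s$ — counted together with the real root $s=z_0$ of multiplicity $\delta-\deg_t p(u+tw)$, which absorbs any drop in degree — in bijection, with real coefficients, with the roots in $\tau$ of $t\mapsto p(u+tw)$; hence all roots in $s$ are real precisely when $t\mapsto p(u+tw)$ has only real roots. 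Taking $z_0=0$ recovers exactly the real-zero condition for $p$ at $u$, and the general case follows.

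Next I would identify $S$ and $S^\circ$ with slices of a hyperbolicity cone. Let $u_0\in S^\circ$ be a point with respect to which $p$ is real zero, as provided by the definition of rigid convexity. Then $p(u_0)>0$, so $u_0$ lies in the connected component $C$ of $\{x\in\R^n:p(x)>0\}$ whose closure is $S$, and $S=\cC_p(u_0)=\overline C$. By the dictionary, $P$ is hyperbolic with respect to $e_{u_0}$; so, letting $\Lambda_{++}$ be the connected component of $\{x\in\R^{n+1}:P(x)\neq0\}$ containing $e_{u_0}$, G\r{a}rding's theorem yields that $\Lambda_{++}$ is an open convex cone, that $P>0$ on $\Lambda_{++}$, and that $P$ is hyperbolic with respect to every point of $\Lambda_{++}$. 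Using convexity of $\Lambda_{++}$, the segment from $e_{u_0}$ to any $(1,x)\in\Lambda_{++}$ stays in $\Lambda_{++}\cap\{x_0=1\}$ and projects to a segment in $\{p>0\}$, while conversely any path in $C$ lifts into $\Lambda_{++}$ along $x\mapsto(1,x)$; hence the slice $\{x\in\R^n:(1,x)\in\Lambda_{++}\}$ equals $C$. In particular $C$ is convex, whence $S=\overline C$ is convex, proving (ii), and $S^\circ=(\overline C)^\circ=C$ since $C$ is open and convex.

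Part (i) now follows: for every $u\in S^\circ=C$ we have $e_u=(1,u)\in\Lambda_{++}$, so $P$ is hyperbolic with respect to $e_u$ by G\r{a}rding's theorem, and the dictionary gives that $p$ is real zero with respect to $u$. The single substantial ingredient is G\r{a}rding's theorem itself — convexity of the hyperbolicity cone together with the propagation of hyperbolicity to all of its points — which is the classical, nontrivial input; it can be cited (e.g.\ from \cite{SIAMbook}) or proved via concavity of the smallest-eigenvalue function on the cone. The only points needing genuine care on our side are the bookkeeping in the homogenization step, where one must check that no complex roots are created or destroyed when $\deg_t p(u+tw)<\delta$ (handled by the root at $s=z_0$ above), and the matching of connected components under slicing, where the convexity of $\Lambda_{++}$ is precisely what validates the identification $S^\circ=C$.
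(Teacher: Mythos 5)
The paper does not prove this proposition; it simply cites it as Theorem~6.3 of \cite{SIAMbook}. Your argument---homogenizing $p$ to $P$, verifying the equivalence between the real zero property at $u$ and hyperbolicity of $P$ in direction $(1,u)$ via the root bijection $\tau=(z_0-s)^{-1}$ (with the extra root at $s=z_0$ accounting for any drop in $\deg_t p(u+tw)$), invoking G\r{a}rding's theorem for convexity of $\Lambda_{++}$ and propagation of hyperbolicity, and then identifying $S^\circ$ with the affine slice of $\Lambda_{++}$---is correct and is precisely the standard proof found in the cited reference.
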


From Proposition~\ref{prop:rzprops} and the definition of the real zero property, we conclude:

\begin{cor}
 Let $S\subset\R^n$ be rigidly convex and $p$ be its defining polynomial as an algebraic interior. Then $p(u)>0$ for every $u\in S^\circ$.
\end{cor}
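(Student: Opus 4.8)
The plan is to simply unwind the relevant definitions. Let $u\in S^\circ$ be an arbitrary interior point. By \Cref{prop:rzprops}~(i), the defining polynomial $p$ of $S$ is real zero with respect to $u$. Inspecting the definition of the real zero property, its first clause is precisely the condition $p(u)>0$. Since $u\in S^\circ$ was arbitrary, this yields $p(u)>0$ for every $u\in S^\circ$, and no further work is required.

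I do not expect any genuine obstacle here: the entire content is carried by \Cref{prop:rzprops}~(i), which upgrades the real zero property from holding at the single interior point furnished by the definition of rigid convexity to holding at \emph{every} interior point of $S$. One could instead attempt a more hands-on argument, starting from $p(u_0)>0$ at the distinguished point $u_0\in S^\circ$ appearing in the definition of rigid convexity, and then using that $S^\circ$ is open and connected (as $S$ is convex with nonempty interior) to try to rule out a sign change of $p$ inside $S^\circ$; but showing $p>0$ throughout $S^\circ$ in this way essentially reproves \Cref{prop:rzprops}, so directly invoking that proposition is the economical route and the one I would take.
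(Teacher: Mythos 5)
Your proof is correct and matches the paper's own argument: the paper likewise deduces the corollary directly from \Cref{prop:rzprops}~(i) together with the first clause of the definition of the real zero property.
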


We now prove a simple criterion to check if nested rigidly convex sets coincide.

\begin{cor}\label{cor:boundaryvanish}
 Let $S_1 \subset S_2 \subset \R^n$ be two rigidly convex sets. If the defining polynomial $p$ of $S_2$ vanishes completely on $\partial S_1$, then $S_1=S_2$.
\end{cor}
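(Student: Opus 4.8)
The plan is to argue by contradiction, reducing the statement to a clash between two facts about $p$: by the corollary immediately preceding this one, $p>0$ on the interior $S_2^\circ$, while by hypothesis $p$ vanishes on $\partial S_1$. Hence it suffices to show that if $S_1\subsetneq S_2$, then $\partial S_1$ meets $S_2^\circ$.

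First I would dispose of the trivial case: any algebraic interior contains a nonempty open set (the connected component of $\{p>0\}$ through its base point), so $S_1$ and $S_2$ are full-dimensional; in particular, if $\partial S_1=\emptyset$ then $S_1=\R^n$ and hence $S_2=S_1$. Otherwise, assume $S_1\subsetneq S_2$ and fix $u\in S_1^\circ$. Since $S_2$ is convex (\Cref{prop:rzprops}) with nonempty interior, $S_2^\circ$ is dense in $S_2$; as $S_2\setminus S_1$ is nonempty and relatively open in $S_2$, it meets $S_2^\circ$, so I may pick $y\in S_2^\circ\setminus S_1$. Because $u$ and $y$ both lie in the interior of the convex set $S_2$, the entire segment $[u,y]$ lies in $S_2^\circ$.

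Next I would locate the boundary crossing along this segment: writing $\gamma(t)=(1-t)u+ty$ and $t^\ast=\inf\{t\in[0,1] : \gamma(t)\notin S_1\}$, the point $z=\gamma(t^\ast)$ lies in $S_1$ (by closedness of $S_1$, as a limit of the points $\gamma(t)$ with $t<t^\ast$) but not in $S_1^\circ$ (as a limit of points outside $S_1$), hence $z\in\partial S_1$; moreover $z\in[u,y]\subset S_2^\circ$. Then $p(z)>0$ by the preceding corollary, while $p(z)=0$ by hypothesis --- a contradiction. Therefore $S_1=S_2$.

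I do not foresee a real obstacle. The only points that need care are (i) recording that rigidly convex sets, being algebraic interiors, have nonempty interior (without this the boundary-crossing step would be vacuous), and (ii) the elementary convexity fact that the segment joining two interior points of a convex set stays in the interior, which is exactly what places the crossing point $z$ in $S_2^\circ$ rather than merely in $S_2$. Alternatively, one could first show $p_1\mid p$, where $p_1$ is the defining polynomial of $S_1$, using \Cref{lem:propsdefp} together with Hilbert's Nullstellensatz, but the geometric argument above avoids this detour.
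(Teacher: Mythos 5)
Your argument is correct and follows the paper's own plan: pick a point of $S_2$ outside $S_1$ together with an interior point of $S_1$, walk along the segment until it exits $S_1$, and observe that the crossing point lies in $\partial S_1\cap S_2^\circ$, which contradicts $p>0$ on $S_2^\circ$ (from \Cref{prop:rzprops} and the preceding corollary) against $p\equiv 0$ on $\partial S_1$. The one genuine divergence is how the crossing point is placed in $S_2^\circ$: the paper starts from an arbitrary $a\in S_2\setminus S_1$ and appeals to the face-theoretic fact that every boundary point of a closed convex set with nonempty interior lies in a proper face (\cite[Cor.~2.8]{barvinok}), whereas you first upgrade $a$ to a point $y\in S_2^\circ\setminus S_1$ --- using that $S_2\setminus S_1$ is relatively open in $S_2$ (since $S_1$ is closed) and that $S_2^\circ$ is dense in $S_2$ --- and then need only the elementary fact that the open segment joining two interior points of a convex set stays in the interior. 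Your route avoids the face lemma entirely, at the modest cost of the density step, and is otherwise equivalent; both are sound, and your handling of the degenerate case $\partial S_1=\emptyset$ and of the endpoint issues of $t^\ast$ is careful and correct.
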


\begin{proof}
 Assume that there is $a\in S_2\setminus S_1$. Let $b\in S_1^\circ$ and consider the line segment $[a,b]\subset\R^n$ between $a$ and $b$. Then we have that $[a,b]\cap S_1$ is of the form $[c,b]$ for some $c\in(a,b)\cap \partial S_1$ because $S_1$ is closed and convex. Since $S_2$ is a closed convex set with a nonempty interior, every point in its boundary is contained in a proper face \cite[Cor.~2.8]{barvinok}. Thus, we have that $c\in \partial S_1\cap S_2^\circ$. However, by Proposition~\ref{prop:rzprops} (i), the polynomial $p$ does not vanish on $c$.
\end{proof}

The main motivation to study rigidly convex sets comes from the following result.

\begin{prop}[\S2.2 in \cite{pastpresentfuture}]\label{prop:specrigid}
 If $S\subset\R^n$ is a spectrahedron with nonempty interior, then $S$ is rigidly convex.
\end{prop}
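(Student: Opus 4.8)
The plan is to show that $\det M(x)$ is itself a real zero polynomial cutting out $S$ as an algebraic interior, and then to transfer this property to the minimal defining polynomial via \Cref{lem:defpoly}. Since $S^\circ\neq\emptyset$, fix $x_0\in S^\circ$, so that $M(x_0)\succ0$. First I would identify $S$ with $\cC_{\det M}(x_0)$: on $S^\circ$ one has $M(x)\succ0$, hence $\det M(x)>0$, so the open connected set $S^\circ$ is contained in the connected component $C$ of $\{x\in\R^n : \det M(x)>0\}$ through $x_0$; conversely, on all of $C$ the symmetric matrix $M(x)$ is nonsingular, so its number of negative eigenvalues is locally constant, hence constant along $C$, hence equal to its value $0$ at $x_0$, which forces $M(x)\succ0$ on $C$, i.e., $C\subseteq S^\circ$. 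Thus $C=S^\circ$, and taking closures (the closed convex set $S$ with nonempty interior satisfies $\overline{S^\circ}=S$) yields $\cC_{\det M}(x_0)=\overline C=S$. In particular $S$ is an algebraic interior, so by \Cref{lem:defpoly} its defining polynomial $p$ divides $\det M$, say $\det M=p\,h$ with $h\in\R[x]$.

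Next I would check that $\det M$, and therefore $p$, is real zero with respect to $x_0$. For $w\neq0$, set $L=\sum_i w_i M_i\in\Sym^2(\R^d)$; then
\[
\det M(x_0+tw)=\det\!\big(M(x_0)+tL\big)=\det\!\big(M(x_0)\big)\,\prod_j\big(1+t\mu_j\big),
\]
where the $\mu_j$ are the eigenvalues of the symmetric matrix $M(x_0)^{-1/2}\,L\,M(x_0)^{-1/2}$, hence real; so this univariate polynomial has only real zeros and is not identically zero, since its value at $t=0$ is $\det M(x_0)>0$. As $p(x_0+tw)$ divides $\det M(x_0+tw)$ in $\R[t]$, it too has only real zeros. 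Finally $p(x_0)>0$: the polynomial $p$ does not vanish on $S^\circ$ (there $p\,h=\det M>0$) and is positive at the base point of the component of $\{p>0\}$ whose closure is $S$ --- a point lying in $S^\circ$, being an interior point of $S$ --- so by connectedness $p>0$ on all of $S^\circ\ni x_0$. Hence $p$ is real zero with respect to the interior point $x_0$, which is exactly the statement that $S$ is rigidly convex.

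The only step that is more than bookkeeping is identifying $\cC_{\det M}(x_0)$ with $S$, and even there the crux is the elementary observation that the signature of a continuous family of symmetric matrices is constant on a connected set where the determinant does not vanish; everything else relies on \Cref{lem:defpoly} together with the stability of real-rootedness under passing to a polynomial factor. (Alternatively, one could quote from \cite{pastpresentfuture} that a spectrahedron with nonempty interior is an algebraic interior, which would shorten the first step.)
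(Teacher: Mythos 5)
The paper itself gives no proof for this proposition—it only cites \S2.2 of Vinnikov's survey \cite{pastpresentfuture}. Your argument follows the standard route via the determinant, and its main steps (identifying $S$ with $\cC_{\det M}(x_0)$ by a signature-continuity argument, invoking \Cref{lem:defpoly} to get $p \mid \det M$, computing $\det M(x_0+tw)$ through $M(x_0)^{-1/2}LM(x_0)^{-1/2}$, and then transferring the real-rootedness and positivity to $p$) are all sound.

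There is, however, one genuine gap at the very first line: the assertion ``fix $x_0\in S^\circ$, so that $M(x_0)\succ 0$'' is false for an arbitrary pencil. For example, with $M(x)=\diag(1+x,\,1-x,\,0)$ one has $S=[-1,1]$ and $S^\circ\neq\emptyset$, yet $M(x_0)$ is singular for every $x_0$, so $\det M\equiv 0$ and $\cC_{\det M}(x_0)$ is undefined. What is true is that the common kernel $K=\bigcap_x \ker M(x)$ is nontrivial exactly when this happens, and one may pass to the restricted pencil on $K^\perp$ (equivalently, translate so that $0\in S^\circ$ and replace $M$ by a monic pencil as in \Cref{rem:monic}); the spectrahedron $S$ is unchanged, and only then does $x_0\in S^\circ$ imply $M(x_0)\succ 0$, and more generally $M\succ 0$ on all of $S^\circ$ (a fact you also use when showing $S^\circ\subseteq C$). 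With this reduction stated up front, the rest of your proof goes through as written, and the concluding step that $p(x_0)>0$ is also correct: the connected component $C'$ of $\{p>0\}$ with $\overline{C'}=S$ is an open subset of $S$, hence of $S^\circ$, and since $ph=\det M>0$ on the connected set $S^\circ$, the sign of $p$ is constant there and equal to its value on $C'$.

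So: correct approach, correct computations, but you must explicitly invoke the monic (or common-kernel) reduction before you can write $M(x_0)\succ 0$.
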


Whether or not the converse of Proposition~\ref{prop:specrigid} holds is an open question:

\begin{con}[Generalized Lax Conjecture~\cite{HV07}]\label{conj:glc}
    Every rigidly convex set is a spectrahedron.
\end{con}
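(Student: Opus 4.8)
\emph{Plan.} The final statement is the Generalized Lax Conjecture, which is still open, so what follows is the strategy one would pursue, the ranges in which it succeeds, and where it genuinely breaks down, rather than a complete proof.

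\emph{Step 1: reduce to hyperbolic polynomials and determinantal representations.} Given a rigidly convex set $S\subset\R^n$, I would first translate so that $0\in S^\circ$; by \Cref{prop:rzprops} its defining polynomial $p$ is real zero with respect to $0$, and we normalize $p(0)=1$. Homogenizing, $P(x_0,x):=x_0^{\deg p}\,p(x/x_0)$ is hyperbolic in the direction $e_0$, and $S$ is the affine slice $\{x_0=1\}$ of the closed hyperbolicity cone of $P$; conversely every hyperbolicity cone dehomogenizes to a rigidly convex set, so the conjecture is equivalent to the statement that every hyperbolicity cone is a spectrahedral cone. Concretely, the target is a monic symmetric pencil $M(x)=\Id_d+x_1M_1+\dots+x_nM_n$ with $S=\{x:M(x)\succeq0\}$; by \Cref{prop:specrigid} and \Cref{lem:defpoly}, $p$ then divides $\det M$, say $\det M=q\cdot p$ for some $q\in\R[x]$, so the real work is to manufacture the pair $(q,M)$ out of $p$.

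\emph{Step 2: the case $n\le 3$.} Here $P$ is a ternary hyperbolic form and the conjecture reduces to the original Lax conjecture, settled by Helton and Vinnikov \cite{HV07}. I would run their argument: deform $P$ inside the hyperbolic ternary forms so that the projective plane curve $\{P=0\}$ becomes smooth; such a curve of degree $d$ carries a family of symmetric determinantal representations parametrized by its non-effective theta characteristics (Dixon), and the real topology forced by hyperbolicity --- a nest of $\lfloor d/2\rfloor$ ovals together with a pseudoline when $d$ is odd --- isolates one such representation whose pencil is positive definite at an interior point of $S$. Finally, remove the deformation by passing to a limit: the limiting pencil cuts out a rigidly convex set $S'\supseteq S$ whose defining polynomial vanishes on $\partial S$, hence $S'=S$ by \Cref{cor:boundaryvanish}. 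In this low-dimensional range one even gets $q\equiv 1$.

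\emph{Step 3: the general case, and the main obstacle.} For $n\ge 4$ the plane-curve machinery (theta characteristics, Dixon) has no known analogue for hypersurfaces, and --- this is the heart of the matter --- the obstruction is not merely technical: Brändén constructed hyperbolic polynomials, built from the Vámos matroid, no power of which admits a definite symmetric determinantal representation. Hence $P=\det M$ is in general impossible, and a proof must use a nontrivial denominator $q$ with $q\cdot P=\det M$ (or else extra variables, producing a spectrahedral shadow). The known partial results either restrict the class of $P$ --- quadratics, elementary symmetric polynomials, hyperbolicity cones realized as linear sections of the PSD cone --- and write down $(q,M)$ by hand, or exploit symmetry as in the present paper, or recast ``$S$ is a spectrahedron'' as a Positivstellensatz or moment feasibility problem. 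The step I expect to be the genuine obstacle is exactly this: there is no construction producing the auxiliary factor $q=q_P$ uniformly in $P$ --- depending algebraically on $P$, with controlled degree and positivity, and surviving degenerations of the hypersurface $\{P=0\}$ --- and supplying such a $q$ in full generality is precisely what a proof of the conjecture demands and what remains unknown.
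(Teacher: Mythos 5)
The statement is a conjecture, not a theorem: the paper offers no proof, presenting it explicitly as a central open problem whose truth would, combined with Theorem~\ref{thm:main-polar}(iii), imply that \eqref{eq:bijection-convex} preserves the class of spectrahedra. You correctly recognize this and, rightly, do not claim a proof. Your survey of the state of the art is accurate in its details: the equivalence with hyperbolicity cones after homogenization, the Helton--Vinnikov resolution of the $n\leq 3$ case via theta characteristics of real plane curves (where one indeed gets $q\equiv 1$), and Br\"and\'en's V\'amos-matroid construction showing that for general $n$ no power of $p$ admits a definite symmetric determinantal representation, so that a nontrivial cofactor $q$ with $q\,p=\det M$ is unavoidable. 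The obstacle you isolate --- the absence of any uniform construction of such a $q$ --- is precisely what keeps the conjecture open.
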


\section{Equivariant descriptions of invariant spectrahedra}\label{sec:proofthma}

Throughout the section, we let $\G$ be a compact Lie group acting orthogonally on $\R^n$, and we fix a Haar measure $\mu$ on $\G$. We begin with an elementary example:

\begin{ex}\label{ex:inv-subspace}
Any $\G$-invariant linear subspace $W\subset\R^n$ admits a $\G$-equivariant spectrahedral representation. Indeed, the  orthogonal direct sum decomposition $\R^n=W\oplus W^\perp$ yields a $\G$-equivariant orthogonal projection $\pi\colon \R^n\to W^\perp$, and
\begin{equation}\label{eq:inv-subspace-specrep}
    W=\left\{x \in \R^n : M(x)=\begin{pmatrix}
        0 & \pi(x) \\ \pi(x)^T & 0
    \end{pmatrix} \succeq 0 \right\},
\end{equation}
where $M\colon \R^n\to \Sym^2(W^\perp\oplus \R)$ is $\G$-equivariant with respect to the $\G$-repre\-sentation where $g\in\G$ acts on $\Sym^2(W^\perp\oplus \R)$ by conjugation with 
$\begin{pmatrix}
        g &  0 \\ 0 & 1
    \end{pmatrix}$. In particular, the trivial spectrahedron $W=\{0\}$ admits a $\G$-equivariant spectrahedral representation, given by \eqref{eq:inv-subspace-specrep} with $\pi(x)=x$. 
    
    Moreover, any $\G$-invariant affine subspace $S\subset\R^n$ admits a $\G$-equivariant spectrahedral description, since $S=W+v$ where $W$ is a $\G$-invariant linear subspace and $v\in S$ is a fixed vector, e.g., we may take $v$ to be the center of mass of the $\G$-orbit of some $s\in S$. The composition of \eqref{eq:inv-subspace-specrep} and the translation by $-v$, which is $\G$-equivariant, gives the spectrahedral representation $S=\{x\in\R^n :M(x-v)\succeq 0\}$.
\end{ex}

Given a nontrivial $\G$-invariant spectrahedron $S=\{x\in\R^n : M(x)\succeq0\}$, we shall assume without loss of generality that $0\in\R^n$ is an interior point of $S$ and that $M$ is \emph{monic}, i.e., $M(0)=\Id_d$. Indeed, if $S^\circ=\emptyset$, then we may replace $\R^n$ with the affine hull $\operatorname{aff}(S)$ of $S$, where the relative interior of $S$ is nonempty. If $S\subset \operatorname{aff}(S)$ admits a $\G$-equivariant spectrahedral description, then so does $S\subset\R^n$ by Example~\ref{ex:inv-subspace}. Moreover, we may assume $0\in S^\circ$ up to translating by an interior point in $S$ which is fixed by $\G$, e.g., the center of mass of the $\G$-orbit of some $s\in S^\circ$. Finally, we may assume 
that $M(x)$ is monic (cf.~Remark~\ref{rem:monic}) by replacing it with the $r\times r$ upper left submatrix of $P^T M(x)P$, where $P$ is invertible and such that 
\begin{equation*}
 P^T M(0) P=\begin{pmatrix}
     \Id_r&0\\ 0&0
 \end{pmatrix}.
\end{equation*}
So, henceforth in this section, consider a spectrahedron $S=\{x\in\R^n : M(x)\succeq0\}$, where the affine-linear map $M\colon \R^n\to\Sym^2(\R^d)$ is monic, and denote by
\begin{equation}\label{eq:sgformula}
    S_\G:=\{ x\in\R^n :  M(gx)\succeq0 \; \text{ for all } g\in \G \}
\end{equation}
the largest $\G$-invariant subset of $S$. Clearly, $S=S_\G$ if and only if $S$ is $\G$-invariant.
We now construct an infinite-dimensional spectrahedral description for $S_\G$.

\begin{cons}\label{con:l2construction}
 Let $\mathcal{V}=L^2(\G,\R^d)$ be the space of square-integrable functions from $\G$ to $\R^d$ with respect to $\mu$, which is a Hilbert space with inner product
\begin{equation}\label{eq:ip}
\scal{\varphi,\psi}=\int_{\G}\varphi(g)^T \psi(g) \;\dd \mu(g),
\end{equation}
where elements of $\R^d$ are viewed as column vectors.
The group $\G$ acts on $\mathcal{V}$ on the left, preserving the above inner product, via the left regular representation
\begin{equation}\label{eq:g-action-phi}
 (g\cdot \varphi)(h)=\varphi(g^{-1}h)
\end{equation}
for all $g,h\in \G$ and $\varphi\in\cV$. Let $\Sym^2(\mathcal{V})$ denote the space of symmetric bilinear maps $\mathcal{V}\times\mathcal{V}\to\R$ and let $\G$ act (on the left) on $\Sym^2(\mathcal{V})$ by
\begin{equation}\label{eq:g-action-Q}
 (g\cdot Q)(\varphi,\psi)=Q(g^{-1}\cdot \varphi, g^{-1}\cdot \psi)
\end{equation}
for all $Q\in\Sym^2(\cV)$, $g\in \G$, and $\varphi,\psi\in\cV$.

We say $Q\in \Sym^2(\mathcal{V})$ is \emph{positive-semidefinite}, and write $Q\succeq 0$, when $Q(\varphi,\varphi)\geq 0$ for all $\varphi\in \mathcal{V}$.
We associate to the monic affine-linear map $M\colon \R^n\to \Sym^2(\R^d)$ the monic affine-linear $\G$-equivariant map $\widetilde{M}\colon \R^n\to\Sym^2(\mathcal{V})$ given by
\begin{equation}\label{eq:tildeM}
 \widetilde{M}(x)(\varphi,\psi):=\int_{\G} \varphi(g^{-1})^T \, M(gx)\, \psi(g^{-1}) \; \dd \mu(g).
\end{equation}
\end{cons}

\begin{prop}\label{prop:prop}
If $M\colon \R^n\to \Sym^2(\R^d)$ is a monic affine-linear map, then $\widetilde{M}\colon \R^n\to\Sym^2(\mathcal{V})$ given by \eqref{eq:tildeM} is a monic affine-linear $\G$-equivariant map and
  \begin{equation*}
  S_\G = \{x\in\R^n : \widetilde{M}(x)\succeq0\}. 
  \end{equation*}
\end{prop}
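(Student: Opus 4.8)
The plan is to verify separately the three assertions: that $\widetilde M$ is affine-linear, that it is monic, that it is $\G$-equivariant, and then the identity $S_\G = \{x : \widetilde M(x)\succeq 0\}$. The first three are routine checks on the formula \eqref{eq:tildeM}. Affine-linearity in $x$ is immediate since $M$ is affine-linear and the integral over $\G$ is a linear operation; monicity follows because $M(0)=\Id_d$, so $\widetilde M(0)(\varphi,\psi) = \int_\G \varphi(g^{-1})^T\psi(g^{-1})\,\dd\mu(g)$, which equals $\scal{\varphi,\psi}$ after the substitution $g\mapsto g^{-1}$ using that $\mu$ is (bi-)invariant — hence $\widetilde M(0)$ is the identity bilinear form on $\cV$. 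For equivariance, one substitutes: $\widetilde M(gx)(\varphi,\psi) = \int_\G \varphi(h^{-1})^T M(hgx)\psi(h^{-1})\,\dd\mu(h)$, and the change of variables $h\mapsto hg^{-1}$ (invariance of Haar measure) turns this into $\int_\G \varphi(gh^{-1})^T M(hx)\psi(gh^{-1})\,\dd\mu(h)$, which is exactly $\widetilde M(x)(g^{-1}\cdot\varphi,\, g^{-1}\cdot\psi) = (g\cdot\widetilde M(x))(\varphi,\psi)$ by \eqref{eq:g-action-phi} and \eqref{eq:g-action-Q}. So $\widetilde M(gx) = g\cdot\widetilde M(x)$, i.e.\ $\widetilde M$ is $\G$-equivariant.

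The substantive part is the set equality. For the inclusion $\{x : \widetilde M(x)\succeq 0\}\subseteq S_\G$, fix $x$ with $\widetilde M(x)\succeq 0$ and fix $g_0\in\G$ and $v\in\R^d$; I want $v^T M(g_0 x) v\ge 0$. The idea is to test $\widetilde M(x)$ against functions $\varphi$ concentrated near $g_0^{-1}$: take a sequence (or net) of nonnegative functions $\varphi_k$ supported in shrinking neighborhoods of $g_0^{-1}$ with $\int_\G \varphi_k(g)^2\,\dd\mu(g)$-normalization, all pointing in direction $v$, so that $\widetilde M(x)(\varphi_k,\varphi_k) = \int_\G \varphi_k(g^{-1})^T M(gx)\varphi_k(g^{-1})\,\dd\mu(g) \to v^T M(g_0 x) v$ by continuity of $g\mapsto M(gx)$; since each term on the left is $\ge 0$, the limit is $\ge 0$. (If $\G$ has isolated points / is finite this is even easier — one takes $\varphi$ literally supported at $g_0^{-1}$.) Conversely, if $x\in S_\G$, then $M(gx)\succeq 0$ for all $g$, so for every $\varphi\in\cV$ the integrand $\varphi(g^{-1})^T M(gx)\varphi(g^{-1})$ is pointwise $\ge 0$, whence $\widetilde M(x)(\varphi,\varphi)\ge 0$; thus $\widetilde M(x)\succeq 0$. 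This direction is immediate.

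The main obstacle is making the approximation argument in the first inclusion clean: one must produce, for an arbitrary compact Lie group, a suitable sequence of bump functions in $L^2(\G,\R^d)$ localizing mass near a prescribed point, and justify passing to the limit inside the integral (dominated convergence, using local boundedness of $\|M(gx)\|$ near $g_0$ and the normalization of the $\varphi_k$). A convenient alternative, avoiding bump functions entirely, is to argue by contradiction: if $v^T M(g_0 x)v < 0$ for some $g_0$, then by continuity $v^T M(gx) v < -\varepsilon < 0$ on an open neighborhood $U\ni g_0^{-1}$ of positive measure, and taking $\varphi = \mathbf 1_U\cdot v$ gives $\widetilde M(x)(\varphi,\varphi) = \int_U v^T M(gx) v\,\dd\mu(g) < 0$, contradicting $\widetilde M(x)\succeq 0$. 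This is the version I would write up, as it needs only continuity of $g\mapsto M(gx)$ and openness, no delicate limiting.
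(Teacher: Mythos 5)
Your proposal is correct and follows essentially the same route as the paper: the easy inclusion by pointwise nonnegativity, and the reverse by a test function supported near the offending group element (the paper uses a continuous bump $\rho$ with $\varphi(h)=\rho(h^{-1})v$; an indicator works just as well). One small bookkeeping slip in your write-up: continuity gives $v^T M(gx)v<-\varepsilon$ on a neighborhood of $g_0$ (not of $g_0^{-1}$), and with $\varphi=\mathbf 1_U v$ plugged into \eqref{eq:tildeM} the integral runs over $U^{-1}$, not $U$; the paper avoids this by building the inversion into the definition of $\varphi$.
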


\begin{proof}
By construction, $\widetilde{M}$ is $\G$-equivariant, affine-linear, and $\widetilde M(0)(\varphi,\psi)=\langle \varphi,\psi\rangle$. Let $x\in\R^n$ be such that $M(gx)\succeq 0$ for every $g\in \G$. Then $\widetilde{M}(x)(\varphi,\varphi)$ is the integral of a nonnegative function on $\G$ for every $\varphi\in\mathcal{V}$, and hence nonnegative.

For the reverse inclusion, suppose $M(gx)\not \succeq 0$ for some $x\in\R^n$ and $g\in \G$. Choose $v\in\R^d$ such that $v^TM(gx) v<0$. By continuity, there is an open neighborhood $U\subset \G$ of $g$ such that $v^TM(hx)v<0$ for all $h\in U$. Let $\rho\colon \G\to\R$ be a continuous test function with $\rho(g)=1$ and whose support is contained in $U$. Define $\varphi\in\mathcal{V}$ by $\varphi(h)=\rho(h^{-1})v$ for all $h\in \G$. Then $\widetilde{M}(x)\not\succeq 0$, because
\begin{equation*}
 \widetilde{M}(x)(\varphi,\varphi) =\int_{\G} \rho(h)^2\, v^T M(hx) v \;\dd \mu(h)=\int_{U} \rho(h)^2 \, v^TM(hx) v \;\dd \mu(h)<0.\qedhere
\end{equation*}
\end{proof}

We now examine when there exists a \emph{finite-dimensional} $\G$-invariant subspace $V\subset\cV$ such that $\overline M:=\widetilde M|_V$ satisfies
$\overline M(x)\succeq0$ if and only if $\widetilde M(x)\succeq0$, yielding a $\G$-equivariant spectrahedral description of $S_\G$. In general, one cannot expect $S_\G$ to be a spectrahedron, in fact not even a spectrahedral shadow, as illustrated by:

\begin{ex}\label{ex:S_G}
    Set $d=1$ and $n=\binom{m+k}{m}$. We identify $\R^n$ with the vector space $\R[x_1,\ldots,x_m]_{\leq k}$ of all polynomials of degree at most $k$ in $m$ variables, endowed with the natural action of the orthogonal group $\G=\O(m)$. Consider the monic affine-linear map $M\colon\R^n\to \R$, given by $M(P) = 1-P(1,0,\ldots,0)$, and the halfplane (which is a spectrahedron) $S=\{P\in\R^n : M(P)\leq0\}$. 
    As a consequence of \eqref{eq:sgformula} and Proposition~\ref{prop:prop}, the subset $S_\G=\{P\in\R^n : \widetilde{M}(P)\succeq0\}$ consists of all polynomials $P$ on $\R^m$ of degree at most $k$ that are bounded by $1$ on the unit sphere in $\R^m$. By Scheiderer~\cite{shadows}, this set is not a spectrahedral shadow for large enough $m$ and $k$. 
    
    Let us describe a similar example in the vector space $\Sym^2_b(\wedge^2\R^m)$ of algebraic curvature operators $R\colon\wedge^2\R^m\to\wedge^2\R^m$, $m\geq5$, see \cite{BKM-Siaga} for details. Set $d=1$ and $n=\frac{1}{12}m^2(m^2-1)$, and consider the action of $\G=\O(m)$ on $\Sym^2_b(\wedge^2\R^m)\cong\R^n$. The map $M(R)=1+\sec_R(e_1\wedge e_2)$ is a monic affine-linear map, that gives rise to the spectrahedron $S=\{R\in\R^n : M(R)\geq0\}$. Since the $\G$-action is transitive on the oriented Grassmannian of $2$-planes in $\R^m$, we have that $S_\G=\mathfrak R_{\sec\geq-1}(m)$ is the set of curvature operators with sectional curvature bounded below by $-1$, which is not a spectrahedral shadow for $m\geq5$ by \cite[Thm.~A]{BKM-Siaga}.
\end{ex}

In light of the reductions discussed in the beginning of the section, the following result implies Theorem~\ref{thm:main-equivdescrp} in the Introduction. 

\begin{prop}\label{thm:finiteequi}
 Let $M\colon \R^n\to \Sym^2(\R^d)$ be a monic affine-linear map, and
 \begin{equation*}
   S=\{x\in\R^n : {M}(x)\succeq0\}
 \end{equation*}
 be the corresponding spectrahedron. If $S$ is $\G$-invariant, i.e., $S=S_\G$, then there exists a finite-dimensional $\G$-invariant linear subspace $V\subset\cV$ such that $\overline M:=\widetilde M|_V$ satisfies $\overline M(x)\succeq0$ if and only if $\widetilde M(x)\succeq0$. 
In particular, $S$ admits a $\G$-equivariant spectrahedral description $S=\{x\in\R^n : \overline{M}(x)\succeq0\}$.
\end{prop}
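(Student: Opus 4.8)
The plan is to produce the finite-dimensional subspace $V\subset\cV$ by exploiting the defining polynomial $p$ of $S$ (as an algebraic interior), together with the polynomial ``kernel witness'' $\xi$ provided by \Cref{lem:kernidentity}. The key structural point is that, since $S=S_\G$ is $\G$-invariant, the defining polynomial $p$ is $\G$-invariant as well (by the uniqueness in \Cref{lem:defpoly}, as $g\cdot p$ also defines $S$ and has the same minimal degree), and the boundary $\partial S$ is a $\G$-invariant real algebraic set that is Zariski-dense in the complex zero set of $p$ (\Cref{lem:propsdefp}). On $\partial S$ the matrix $M(x)$ is positive-semidefinite with nontrivial kernel, and the vector $\xi(x)$ from \eqref{eq:xi} lies in that kernel for $x$ in a Zariski-dense subset of $\partial S$, since $M(x)\xi(x)=q(x)p(x)v$ vanishes there, and $\xi$ does not vanish identically on $\partial S$ because its coordinates have gcd coprime to $p$ (so \Cref{cor:zardense} applies).

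First I would make this kernel information $\G$-equivariant. For $g\in\G$, the identity $M(gx)\,\xi(gx)=q(gx)\,p(gx)\,v=q(gx)\,p(x)\,v$ shows that $x\mapsto \xi(gx)$ is again a polynomial kernel witness for the translated spectrahedron $S=\{x:M(gx)\succeq0\}$ along $\partial S$. Define $\Phi\colon\R^n\to\cV=L^2(\G,\R^d)$ by $\Phi(x)(g)=\xi(g^{-1}x)$; each component is a polynomial in $x$ of degree $<d$, so $\Phi$ has polynomial entries of bounded degree, and $\Phi$ is $\G$-equivariant for the left regular representation \eqref{eq:g-action-phi}. Now take $V\subset\cV$ to be the (finite-dimensional!) $\G$-invariant subspace spanned by all components of $\Phi$, i.e., the $\R$-span of $\{g\cdot(\text{coefficient vectors of }\Phi)\}$ — concretely, $V$ is the image of the map sending a polynomial datum to the corresponding element of $\cV$, and it is finite-dimensional because $\Phi$ takes values in a fixed finite-dimensional space of $\R^d$-valued polynomials of degree $<d$ in $x$, composed with the $\G$-action, which still lives in a finite-dimensional $\G$-submodule of $\cV$ (the Peter–Weyl decomposition guarantees the $\G$-orbit of a finite set of $L^2$-functions whose matrix coefficients involve only finitely many irreducibles spans a finite-dimensional space; alternatively, enlarge $V$ to also contain the constants $\cV\supset\R^d$, which are needed for monicity). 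I would be careful to also throw in the $\G$-submodule generated by the constant functions so that $\overline M(0)=\widetilde M(0)|_V$ is still the (restricted) identity form, keeping $\overline M$ monic.

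Next I would show $\overline M:=\widetilde M|_V$ detects positive-semidefiniteness, i.e., $\overline M(x)\succeq0 \Rightarrow \widetilde M(x)\succeq0$ (the converse is trivial). Suppose $\widetilde M(x)\not\succeq0$; by \Cref{prop:prop} this means $x\notin S_\G=S$, so $M(x)\not\succeq0$ outright (as $S=S_\G$ the ``translated'' conditions $M(gx)\succeq0$ all fail together with $M(x)\succeq0$; more precisely $x\notin S\Rightarrow M(x)\not\succeq0$). Consider the segment from $0\in S^\circ$ to $x$; let $x_0=t_0x$ be the point where it exits $S$, so $x_0\in\partial S$, $M(x_0)\succeq0$ with nontrivial kernel, and $M(sx)\not\succeq0$ for $s>t_0$. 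Using \Cref{cor:zardense} (with the $F_i$ the coordinates of $\xi$ composed with the relevant $\G$-translate, whose gcd is coprime to $p$), the set of boundary points $x'\in\partial S$ with $\xi(g^{-1}x')\neq0$ is Zariski-dense in $\partial S$; I would then argue that on a Zariski-dense (hence, along $\partial S$, topologically large) subset, $\Phi(x')\in\cV$ is a nonzero element of $V$ lying in the kernel of $\widetilde M(x')$, i.e., $\widetilde M(x')(\Phi(x'),\cdot)=0$. By a perturbation/limiting argument along the exit point $x_0$ — or directly, since $\widetilde M$ restricted to the one-parameter family $s\mapsto \overline M(sx)(\Phi(x_0),\Phi(x_0))$ is an affine function of $s$ that is $\leq0$ at $s=t_0$ (it equals $0$ there if $\Phi(x_0)$ is a genuine kernel vector, and is $<0$ just beyond by the strict failure) — I conclude $\overline M(sx)(\Phi(x_0),\Phi(x_0))<0$ for $s$ slightly larger than $t_0$, and in particular $\overline M(x)\not\succeq0$ after rescaling, provided $x_0$ was chosen in the Zariski-dense good locus; a density and continuity argument in $x$ upgrades this to all $x\notin S$.

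The main obstacle I anticipate is precisely the last implication: making rigorous the passage from ``$\Phi(x')$ is a kernel vector of $\widetilde M(x')$ for $x'$ in a Zariski-dense subset of $\partial S$'' to ``$\overline M(x)\not\succeq0$ for every $x\notin S$.'' The subtlety is that the natural kernel witness $\Phi(x_0)$ may vanish exactly at the exit point $x_0$ of the chosen segment, which is why one needs the Zariski-density from \Cref{cor:zardense} together with a careful argument that the bad locus (where all relevant witnesses vanish) is a proper subvariety of $\partial S$, so that one can either perturb $x$ or invoke continuity of $x\mapsto\overline M(x)$ and lower semicontinuity of the smallest eigenvalue. One must also ensure that finitely many coordinates/$\G$-translates of $\xi$ suffice — i.e., that their common zero locus on $\partial S$ is not all of $\partial S$ — which again follows from \Cref{cor:zardense} since the gcd of those coordinates is coprime to $p$. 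Handling reducible $p$ componentwise (one irreducible factor at a time, as in the proofs of \Cref{lem:kernidentity} and \Cref{cor:zardense}) keeps the bookkeeping manageable.
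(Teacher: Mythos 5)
Your construction of the subspace $V$ is essentially the one in the paper: taking $\Phi(x)(g)=\xi(g^{-1}x)$, the span of $\{\Phi(x):x\in\R^n\}$ in $\cV$ is finite-dimensional (it equals the column span of the matrix $B(g)$ with $B(g)F(x)=\xi(g^{-1}x)$, where $F_1,\ldots,F_m$ is a basis of the polynomial space $W$ spanned by the coordinates of $\xi(g^{-1}x)$), and it is $\G$-invariant because $\Phi$ is $\G$-equivariant. The worry about adding constants to keep monicity is unnecessary: $\widetilde M(0)|_V$ is just the restriction of the $L^2$ inner product to $V$, which is the identity in any orthonormal basis of $V$. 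Likewise your observation that $\psi=\Phi(a)$ is a kernel vector of $\widetilde M(a)$ for $a\in U=\{a\in\partial S:\xi(a)\neq 0\}$ is exactly what the paper computes.

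The gap you flag at the end is real, and the density-and-continuity argument you propose cannot close it. The condition ``$\overline M(x)\not\succeq 0$'' is \emph{open} in $x$ (the least eigenvalue is continuous), so knowing it on a dense subset of $\R^n\setminus S$ gives only an open dense subset, not all of $\R^n\setminus S$; likewise your affine-in-$s$ argument along the ray through $x$ genuinely fails when the exit point $x_0$ lies in the bad locus $\{\xi=0\}\cap\partial S$, and perturbing $x$ does not evidently repair this, since a priori the bad locus could be Euclidean-large in $\partial S$ even if it is Zariski-thin. The paper avoids this by not trying to prove nonpositivity pointwise. Instead it sets $\overline S=\{x:\overline M(x)\succeq 0\}$, which is a spectrahedron containing $S$, hence rigidly convex with a defining polynomial $\overline p$ (\Cref{prop:specrigid}). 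The kernel computation shows that every $a\in U$ lies in $\partial\overline S$, hence $\overline p(a)=0$. Because $\overline p$ is a \emph{polynomial} and $U$ is Zariski-dense in $\partial S$ (\Cref{cor:zardense}), $\overline p$ vanishes on all of $\partial S$. Finally \Cref{cor:boundaryvanish}, which exploits rigid convexity of both $S$ and $\overline S$, forces $\overline S=S$. This is the missing idea: replace the open condition ``$\not\succeq 0$'' by the closed condition ``$\overline p=0$'', for which Zariski density does upgrade to identity on $\partial S$, and then use the rigid convexity machinery rather than a segment/perturbation argument.
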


\begin{proof}
By Proposition~\ref{prop:prop} and the assumption that $S$ is $\G$-invariant, we have that $S=\{x\in \R^n : \widetilde M(x)\succeq0\}$. Let $p$ be the defining polynomial of $S$ as an algebraic interior, and let $\xi\colon\R^n\to\R^d$, $q\in\R[x]$ and $v\in\R^d$ be as in Lemma~\ref{lem:kernidentity}. Let $W$ be the span (in the vector space of polynomials of degree less than $d$) of the coordinates of $\xi(g^{-1}x)$ for all $g\in \G$. Choose a basis $F_1(x),\ldots,F_m(x)$ of $W$ and let
\begin{equation*}
 F(x)=\big(F_1(x),\ldots,F_m(x)\big)^T.
\end{equation*}
For all $g\in \G$, there is a unique $d\times m$ matrix $B(g)$ such that $B(g)\, F(x)=\xi(g^{-1}x)$. We first prove that, as functions on $\G$, the columns of $B(g)$ are linearly independent elements of $\cV$. 
Suppose this is not the case: assume without loss of generality that 
\begin{equation*}
    B^1(g)=\sum_{j=2}^m c_j B^j(g)
\end{equation*}
for some $c_2,\ldots,c_m\in\R$, where $B^j(g)$ is the $j$th column of $B(g)$. By construction,
\begin{equation*}
    \xi(g^{-1}x)=\sum_{j=1}^m F_j(x) B^j(g)=\sum_{j=2}^m \big(F_j(x)+c_jF_1(x)\big) B^j(g)
\end{equation*}
for all $g\in\G$ and $x\in\R^n$, but this means that already $\big(F_j(x)+c_jF_1(x)\big)_{j=2,\ldots,m}$ spans $W$, contradicting our choice of the $F_j(x)$.

Now we prove that the finite-dimensional linear subspace $V$ of $\mathcal{V}$ that is spanned by the columns of $B(g)$ fulfills the desired property; namely, $\overline M:=\widetilde M|_V$ satisfies $\overline M(x)\succeq0$ if and only if $\widetilde M(x)\succeq0$. 
Let $\overline S:=\{ a\in\R^n : \overline{M}(a) \succeq0\}$; we have to show that $\overline S=S$. The inclusion $S \subset \overline S$ is clear, since $\widetilde{M}(a) \succeq0$ implies $\overline{M}(a) \succeq0$. Therefore, by Corollary~\ref{cor:boundaryvanish} and Proposition~\ref{prop:specrigid}, it suffices to prove that the defining polynomial $\overline p$ of $\overline S$ vanishes on the boundary of $S$. By Corollary~\ref{cor:zardense}, it further suffices to prove that $\overline p$ vanishes on $U=\{a\in\partial S : \xi(a)\neq0\}$. By construction, $F(a)\neq0$ for all $a\in U$. Let $a\in U$ and consider $\psi\in V$ defined by
\begin{equation*}
 \psi(g):=B(g)\, F(a).
\end{equation*}
Since $F(a)\neq0$, we have $\psi\neq0$. For every $\varphi\in V$, we then have, by construction,
\begin{align*}
 \widetilde{M}(a)(\varphi,\psi)&=\int_{\G} \varphi(g^{-1})^T \, M(ga)\,\psi(g^{-1}) \;\dd \mu(g)\\
 &=\int_{\G} \varphi(g^{-1})^T \, M(ga)\, \xi(ga) \;\dd \mu(g)\\
 &=\int_{\G} q(ga) p(ga) (\varphi(g^{-1})^T v) \;\dd \mu(g).
\end{align*}
Since $a\in\partial S$, we have $p(a)=0$. Since $S$ is $\G$-invariant, we also have $ga\in\partial S$ for all $g\in \G$, and hence $p(ga)=0$. This shows that $\widetilde{M}(a)(\varphi,\psi)=0$, which means that $\overline M(a)=\widetilde{M}(a)|_V$ has nonzero kernel and therefore $a$ must lie on the boundary of $\overline S$ because $\overline{M}(x)\succ0$ is strictly positive-definite for $x$ in the interior of $\overline S$. Since $a$ is in the boundary of $\overline S$, we have, in particular, that $\overline p$ vanishes on $a$. Hence $\overline p$ vanishes on $U$, which proves the claim.
\end{proof}

\begin{rem}\label{rem:conn}
The above proof and basic facts about Lie groups imply that if $S$ is $\G_0$-invariant, where $\G_0\subset \G$ is the connected component of the identity, then there exists a finite-dimensional $\G$-invariant linear subspace $V\subset\cV$ such that $\overline M:=\widetilde M|_V$ satisfies $\overline M(x)\succeq0$ if and only if $\widetilde M(x)\succeq0$. In particular, $S_\G$ admits a $\G$-equivariant spectrahedral description $S_\G=\{x\in\R^n : \overline{M}(x)\succeq0\}$.
\end{rem}

\begin{ex} \label{ex:constr1}
Let $S\subset\R^2$ be the unit disk, which is the spectrahedron formed by $x=(x_1,x_2)\in\R^2$ that satisfy the linear matrix inequality $M(x)\succeq0$, where
\begin{equation}\label{eq:M-disk}
M\colon \R^2\to \Sym^2(\R^2), \qquad 
 M(x)=\begin{pmatrix}
 1+x_1& -x_2\\ -x_2 & 1-x_1
 \end{pmatrix}.
\end{equation}
Clearly, $S$ is invariant under the action of $\G=\SO(2)$ on $\R^2$ given by the rotations
\begin{equation}\label{eq:gtO2}
g(t)=\begin{pmatrix}
\cos t &-\sin t \\ \sin t& \cos t\end{pmatrix}, \qquad t\in[0,2\pi].
\end{equation}
There does not exist any $\G$-action on $\Sym^2(\R^2)$ induced by an action on $\R^2$ that makes \eqref{eq:M-disk} $\G$-equivariant. 
Indeed, $g(\pi)=-\Id_2$ is in the kernel of any such action and $M(-x)\neq M(x)$ for all $x\neq 0$. We now apply our Construction~\ref{con:l2construction} to obtain a $\G$-equivariant spectrahedral description $\overline M\colon\R^2\to\Sym^2(V)$ for $S$; as it turns out, we can use $V\cong\R^3$.

The algebraic boundary of $S$ is the zero set of $p(x)=1-x_1^2-x_2^2$, and from \eqref{eq:xi}, setting $v=(1,0)^T$, we obtain that $\xi(x)=(1-x_1,x_2)^T$ satisfies $M(x)\xi(x)=p(x)v$. Thus, the vector space $W$ from the proof of Proposition~\ref{thm:finiteequi} is the $3$-dimensional space of polynomials in $\R^2$ of degree at most $1$. We set $F(x)=(1,-x_1,-x_2)^T$, and 
$$B(g(t))=\begin{pmatrix}
1& \cos t &\sin t \\
0& \sin t & -\cos t
\end{pmatrix},$$
which satisfies $B(g(t)) F(x)=\xi(g(-t)x)=\xi(g(t)^{-1}x)$. 
We fix the Haar measure $\dd\mu(g(t))=\frac{1}{2\pi}\dd t$ on $\G$ and let $V\subset L^2(\G,\R^2)$ be the subspace spanned by the columns of $B(g(t))$, which form an orthonormal basis with respect to \eqref{eq:ip}.

By \eqref{eq:tildeM}, the Gram matrix of $\overline M(x):=\widetilde{M}(x)|_V$ with respect to the above basis~is
$$\frac{1}{2\pi}\int_0^{2\pi}
\begin{pmatrix}
 1+x_1 \cos t-x_2 \sin t & x_1+\cos t & x_2-\sin t \\
 x_1+\cos t & 1+ x_1 \cos t+x_2 \sin t & x_2 \cos t-x_1 \sin t \\
 x_2-\sin t & x_2 \cos t -x_1 \sin t & 1-x_1 \cos t-x_2 \sin t  
\end{pmatrix}\;\dd t,$$
which yields the following $\G$-equivariant spectrahedral description of the unit disk:
$$
S=\left\{x\in\R^2 : \overline M(x)= \begin{pmatrix}
   1&x_1&x_2\\x_1&1&0\\x_2&0&1
  \end{pmatrix}\succeq0\right\}.$$

Since the left regular representation \eqref{eq:g-action-phi} of $\G$ on $\cV=L^2(\G,\R^2)$ is given by $(g(s)\cdot \varphi)(t)=\varphi(t-s)$, the corresponding $\G$-representation \eqref{eq:g-action-Q} on $\Sym^2(V)\subset \Sym^2(\cV)$ is via the following matrix conjugation:
\begin{equation*}
    g(s)\cdot Q=
    \begin{pmatrix}
 1 & 0 & 0 \\
 0 & \cos s & \sin s \\
 0 & -\sin s & \cos s \\
    \end{pmatrix}
    \, Q \,
    \begin{pmatrix}
 1 & 0 & 0 \\
 0 & \cos s & -\sin s \\
 0 & \sin s & \cos s \\
    \end{pmatrix}, \qquad Q\in\Sym^2(V).
\end{equation*}
From the above, one easily sees that $\overline M(g(s)x)=g(s)\cdot \overline M(x)$, and hence $\overline{M}$ is indeed $\G$-equivariant.
\end{ex}

\begin{ex} \label{ex:constr2}
Let $\R[x,y]_4$ be the vector space of binary quartics, and consider the convex cone $C\subset\R[x,y]_4^*$ given by functionals $\ell\colon \R[x,y]_4\to\R$ of the form $  \ell(p) = \int_{S^{1}} p(x) \; \dd \mu(x)$, where $\mu$ is a probability measure on $S^1\subset \R^2$. Clearly, $C$ is invariant under the $\mathsf O(2)$-action $ (g\cdot \ell)(p) = \ell(p\circ g)$, where $g\in \mathsf O(2)$, $\ell\in C$. 
Fix the basis $(x^4, x^3 y, x^2 y^2, x y^3, y^4)$ of $\R[x,y]_4$, and write dual coordinates $(\ell_1,\dots,\ell_5)$ on $\R[x,y]_4^*$. 
Setting $\mathfrak m=(x^2, xy, y^2)$, there is a spectrahedral description of $C$ as the elements $\ell\in \R[x,y]_4^*$ such that $M(\ell)=\ell(\mathfrak m^T \mathfrak m)\succeq0$, where $\ell$ is applied entrywise:
\begin{equation*}
  C
  = \left\{(\ell_1,\dots,\ell_5)\in\R^5 : M(\ell)=\begin{pmatrix}
 \ell_1 & \ell_2 & \ell_3 \\
 \ell_2 & \ell_3 & \ell_4 \\
 \ell_3 & \ell_4 & \ell_5 \\
  \end{pmatrix}\succeq0  \right\}.
\end{equation*}
The functional $\ell_0(p)=\frac{1}{2\pi}\int_{0}^{2\pi} p(\cos t,\sin t)\;\dd t$ given by integration with respect to the uniform probability measure on $S^1$ has coordinates $\ell_0=(\frac38, 0, \frac18, 0, \frac38)$. Translating $\ell_0$ to the origin and performing $M\mapsto P^T M P$ for an appropriate invertible matrix $P$, we obtain the monic affine-linear map $\overline M\colon\R^5\to\Sym^2(\R^3)$ given by
\begin{equation*}
    \overline M(\ell)=\begin{pmatrix}
 \ell_1+2 \ell_3+\ell_5+1 & \sqrt{2} (\ell_5-\ell_1) & -2 \sqrt{2} (\ell_2+\ell_4) \\
 \sqrt{2} (\ell_5-\ell_1) & 2 \ell_1-4 \ell_3+2 \ell_5+1 & 4 (\ell_2-\ell_4) \\
 -2 \sqrt{2} (\ell_2+\ell_4) & 4 (\ell_2-\ell_4) & 8 \ell_3+1 \\
  \end{pmatrix}.
\end{equation*}
While Construction~\ref{con:l2construction} leaves $\overline M\colon\R^5\to\Sym^2(\R^3)$ unchanged, it also leads to an $\mathsf O(2)$-action on $\Sym^2(\R^3)$ that makes it $\mathsf O(2)$-equivariant. Namely, if $g(t)\in\mathsf O(2)$ as in \eqref{eq:gtO2} acts on $Q\in \Sym^2(\R^3)$ by $g(t)\cdot Q= \operatorname{diag}(1,g(2t)) \, Q\, \operatorname{diag}(1,g(-2t))$, and acts on $\ell\in \R^5$ by keeping track of the action on the above basis of $\R[x,y]_4$ when it acts by the defining representation on $(x,y)\in\R^2$, then $\overline M$ is $\mathsf O(2)$-equivariant.
\end{ex}

\begin{ex}\label{ex:constr3}
The set of positive-semidefinite $2\times 2$ Hermitian matrices
\begin{equation}\label{eq:herm}
    \begin{pmatrix}
        a_{11} & a_{12}+i \, b_{12}\\
        a_{12} -i \, b_{12} & a_{22}
    \end{pmatrix}
\end{equation}
forms a spectrahedron $S\subset \R^4$ that can be described by the linear matrix inequality $M(a_{11},a_{12},a_{22},b_{12})\succeq0$, where
\begin{equation}\label{eq:M-psdH}
M\colon \R^4\to \Sym^2(\R^3), \quad 
 M(a_{11},a_{12},a_{22},b_{12})=\begin{pmatrix}
 a_{11} & a_{12} & 0 \\
 a_{12} & a_{22} & b_{12} \\
 0 & b_{12} & a_{11} 
 \end{pmatrix},
\end{equation}
is the leading principal minor of order $3$ of the real $4 \times 4$ symmetric matrix corresponding to \eqref{eq:herm}.
The Lie group $\G=\SU(2)$ consists of the $2\times 2$ matrices
\begin{equation*}
    g(x,y,s,t)=\begin{pmatrix}
x+i\,y & -s+i\,t \\
        s+i\,t & \phantom{+}x-i\,y
\end{pmatrix}, \quad x,y,s,t\in\R, \; x^2+y^2+s^2+t^2=1,
\end{equation*}
and acts by conjugation on \eqref{eq:herm} leaving invariant the spectrahedron $S$. Since $\det M(a_{11},a_{12},a_{22},b_{12})$ is not $\G$-invariant, the spectrahedral description \eqref{eq:M-psdH} is not $\G$-equivariant for any $\G$-representation on $\R^3$. Our Construction~\ref{con:l2construction} yields an equivariant $\overline M\colon\R^4\to\Sym^2(V)$, with $V\cong\R^4$, as follows.

In order to work with monic matrix polynomials, we shift \eqref{eq:M-psdH} by $\Id_3$. The defining polynomial of the shifted spectrahedron as an algebraic interior is
$$p(a_{11},a_{12},a_{22},b_{12})=1 + a_{11} - a_{12}^2 + a_{22} + a_{11} a_{22} - b_{12}^2,$$
and $\xi=(-a_{12}, 1+a_{11}, -b_{12})^T$ satisfies $(M+\Id_3)\xi = p\, v$ where $v=(0,1,0)^T$.
As in the proof of Proposition~\ref{thm:finiteequi}, analyzing how $\xi$ behaves under the $\G$-action, we set
\begin{align*}
    F &=\left(1 + \tfrac12 a_{11} +\tfrac12 a_{22},\, -a_{12},\, -\tfrac12 a_{11} + \tfrac12 a_{22},\, b_{12}\right)^T,\\
B(g(x,y,s,t))&=\begin{pmatrix}
 0 & 2 x^2+2 t^2-1 & 2 t y-2 s x & -2 x y-2 s t \\
 1 & -2 s x-2 t y & 2 s^2+2 t^2-1 & 2 s y-2 t x \\
 0 & 2 s t-2 x y & 2 s y+2 t x & 2 y^2+2 t^2-1 \\
\end{pmatrix},
\end{align*}
and let $V\subset L^2(\G,\R^3)$ be the subspace spanned by the columns of $B(g)$, which form an orthonormal basis with respect to \eqref{eq:ip}. 
Here, the Haar measure on $\G$ is the probability measure $\dd\mu(g)=\frac{1}{2\pi^2}\dd\operatorname{vol}$ proportional to the Riemannian volume form of the unit round metric on $\G\cong S^3$.
Using \eqref{eq:tildeM}, we compute the Gram matrix of $\overline M:=\widetilde{M}|_V$ in this basis and, shifting back by $-\Id_4$, we obtain a $\G$-equivariant spectrahedral description of $S$ as the set of $(a_{11},a_{12},a_{22},b_{12})\in\R^4$ such that
\begin{equation*}
    \overline M=\begin{pmatrix}
\frac12 a_{11}+ \frac12 a_{22} & a_{12} &\frac12 a_{11}- \frac12 a_{22} & -b_{12} \\
 a_{12} & \frac12 a_{11}+ \frac12 a_{22} & 0 & 0 \\
\frac12 a_{11}- \frac12 a_{22} & 0 & \frac12 a_{11}+ \frac12 a_{22} & 0 \\
-b_{12} & 0 & 0 & \frac12 a_{11}+ \frac12 a_{22}
\end{pmatrix}\succeq0
\end{equation*}
Note that the diagonal entries are a multiple of the trace of \eqref{eq:herm}, hence invariant under the $\G$-action on $S$, as is the first column of $B_g$ under the $\G$-action on $V$.
\end{ex}

\begin{rem}
Starting from a spectrahedral description of size $d$ and $n$ variables, that is, with an affine-linear map $M\colon\R^n\to\Sym^2(\R^d)$, the equivariant spectrahedral description $\overline M\colon \R^n\to\Sym^2(\R^{\overline d})$ obtained via Theorem \ref{thm:main-equivdescrp} has size $\overline d\leq \binom{n+d-1}{n}$. This bound follows from the proof of Proposition~\ref{thm:finiteequi}, as $B(g)$ has $m=\dim W$ columns and $W\subset \R[x_1,\dots,x_n]_{< d}$. Although this bound is sharp in Example~\ref{ex:constr1}, the actual size $\overline d$ can be significantly smaller, e.g., in Example~\ref{ex:constr2}, where $\overline d=3$ and $\binom{n+d-1}{n}=21$, and in Example~\ref{ex:constr3}, where $\overline d=4$ and $\binom{n+d-1}{n}=15$.
\end{rem}

As illustrated by Example \ref{ex:S_G}, one cannot expect that $S_\G$ is a spectrahedron, even if $S$ is. The following  partial converse to Proposition~\ref{thm:finiteequi} shows that if $S_\G$ is a spectrahedron, which, moreover, is given by linear matrix inequalities coming from Construction~\ref{con:l2construction}, then, in fact, $S$ was already $\G$-invariant (assuming $\G$ is connected, cf.~Remark~\ref{rem:conn}).

\begin{prop}\label{prop:converse}
Let $M\colon \R^n\to \Sym^2(\R^d)$ be a monic affine-linear map, and $S=\{x\in\R^n : {M}(x)\succeq0\}$ be the corresponding spectrahedron. Assume there is a finite-dimensional $\G$-invari\-ant subspace $V\subset \cV$ such that $\overline M:=\widetilde M|_V$ satisfies
 $\overline M(x)\succeq0$ if and only if $\widetilde M(x)\succeq0$. 
If $\G$ is connected, then $S$ is $\G$-invariant.
\end{prop}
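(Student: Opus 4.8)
The plan is to show that the hypothesis $\overline M(x)\succeq 0 \iff \widetilde M(x)\succeq 0$ forces $S = S_\G$, i.e., that $S$ is already $\G$-invariant, and then invoke connectedness to upgrade this to genuine invariance under $\G$. By \Cref{prop:prop}, $S_\G = \{x : \widetilde M(x)\succeq 0\}$, so the hypothesis says precisely that $S = \overline S = S_\G$ as sets. The key point to extract is therefore purely set-theoretic: $S = S_\G$ means $M(x)\succeq 0 \iff M(gx)\succeq 0$ for all $g\in\G$, which is the statement that $S$ is invariant under every $g\in\G$. So in fact the conclusion is immediate once one unwinds the definitions — one does not even need connectedness if one only wants invariance under $\G$ as a set.

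The subtlety the authors presumably have in mind (and the reason connectedness appears) is that $\cV = L^2(\G,\R^d)$ carries the left regular representation of $\G$, and the construction of $\widetilde M$ is tied to this; one should check that no information is lost by restricting to a finite-dimensional $V$ unless the defining data of $S$ genuinely respects the whole group. Concretely, I would argue as follows. First, observe $S\subset S_\G$ always fails in general, but the hypothesis gives $S = \overline S$, and since $\overline S \supset S_\G$ is false too — rather $\widetilde M(x)\succeq 0$ restricted to the subspace $V$ is a weaker condition, so $\overline S \supset S_\G$, while $\overline S = S$ by hypothesis and $S_\G\subset S$ by \eqref{eq:sgformula}. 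Hence $S_\G \subset S = \overline S$, and combined with $\overline M(x)\succeq 0 \iff \widetilde M(x) \succeq 0$ we get $S = \overline S = \{x : \widetilde M(x)\succeq 0\} = S_\G$. Thus $S = S_\G$, which by definition of $S_\G$ means $x\in S \iff gx \in S$ for all $g\in\G$, i.e., $S$ is $\G$-invariant.

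For the role of connectedness: I expect it is needed only if one wants to phrase the statement so that it dovetails with \Cref{rem:conn}, or if the intended argument instead runs through the defining polynomials and Lie-algebra infinitesimal invariance rather than the direct set-theoretic route above. In the latter style, one would show the defining polynomial $p$ of $S$ is invariant under the connected subgroup generated by the exponentials of the Lie algebra — using that $\overline M$ being $\G_0$-equivariant (which it is, by construction of $\widetilde M$ and the fact that $V$ is $\G$-invariant) forces $\partial \overline S = \partial S$ to be $\G_0$-invariant, hence $p$ is $\G_0$-invariant up to scalar, hence $S = \cC_p(0)$ is $\G_0$-invariant, and $\G_0 = \G$ by connectedness. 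The main (and only) obstacle is making sure the logical chain $S_\G\subset S = \overline S \subset \{\widetilde M\succeq 0\} = S_\G$ is airtight, in particular that $\overline M(x)\succeq 0 \iff \widetilde M(x)\succeq 0$ is being used in both directions and that \Cref{prop:prop} is applicable; everything else is bookkeeping. I would write the short set-theoretic proof as the main argument and relegate the polynomial/connectedness discussion to a remark if the direct argument already suffices.
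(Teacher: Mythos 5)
Your ``purely set-theoretic'' argument rests on a misreading of the hypothesis. You write ``while $\overline S = S$ by hypothesis,'' but that is not what is assumed. The hypothesis is that $\overline M(x)\succeq 0$ if and only if $\widetilde M(x)\succeq 0$, which says $\overline S = \{x : \widetilde M(x)\succeq 0\} = S_\G$ (using \Cref{prop:prop}). There is no stated relation between $\overline S$ and $S$: one is cut out by the restriction $\widetilde M|_V$ acting on the subspace $V$, the other by the original finite matrix $M$, and nothing in the hypotheses identifies them. What you actually have is the chain $\overline S = S_\G \subset S$; the content of the proposition is exactly the missing inclusion $S\subset S_\G$, and your argument never produces it. Once the phantom equality $\overline S=S$ is removed, your ``airtight'' chain collapses to $S_\G \subset S$, which is the trivial direction.

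That connectedness cannot be dispensed with in a set-theoretic way is already pointed out in the paper right after \Cref{prop:converse}: if $\G$ is finite, $\cV = L^2(\G,\R^d)$ is itself finite-dimensional, so one may take $V=\cV$ and the hypothesis holds vacuously for \emph{every} spectrahedron $S$, including non-invariant ones. So any proof that, as you claim, ``does not even need connectedness'' is necessarily wrong. The actual argument in the paper uses connectedness at a specific analytic step: $S_\G = \overline S$ is a spectrahedron by hypothesis, so at a boundary point $x\in\partial S_\G$ there is $0\neq\varphi\in V$ with $\widetilde M(x)(\varphi,\varphi) = \int_\G \varphi(g^{-1})^T M(gx)\varphi(g^{-1})\,\dd\mu(g) = 0$; since $x\in S_\G$ the integrand is nonnegative, hence vanishes a.e.; then one invokes that $\varphi$, being a representative function on a connected compact Lie group, is analytic and thus has a zero set of measure zero, forcing $M(gx)$ to be singular for a.e.\ $g$, hence for all $g$ by closedness, hence $x\in\partial S$. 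Connectedness is what guarantees analyticity gives measure-zero zero sets; your Lie-algebra sketch gestures in a vaguely related direction but never isolates this point, and your main argument avoids it entirely by the erroneous equality $\overline S = S$.
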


\begin{proof} 
By Proposition~\ref{prop:prop}, we have that $S_\G=\{x\in\R^n : \widetilde{M}(x)\succeq0\}$, so it suffices to show that $S_\G=S$, see \eqref{eq:sgformula}.
By Proposition~\ref{prop:specrigid} and Corollary~\ref{cor:boundaryvanish}, it suffices to prove $\partial S_\G\subset \partial S$. 
Note that $S_\G$ is a spectrahedron, since we are assuming $S_\G = \{ x\in\R^n : \overline M(x) \succeq0\}$. Thus, given $x\in \partial S_\G$, the kernel of $\overline{M}(x)$ is nonzero, so there exists $\varphi\neq 0$ in $V$ such that $\widetilde{M}(x)(\varphi,\varphi)=0$, i.e.,
\begin{equation*}
  \int_{\G} \varphi(g^{-1})^T \, M(gx)\, \varphi(g^{-1}) \;\dd \mu(g)=0.  
\end{equation*}
Since we have $M(gx)\succeq 0$ for all $g\in \G$ by \eqref{eq:sgformula}, it follows that
\begin{equation*}
  \varphi(g^{-1})^T \, M(gx) \,\varphi(g^{-1})=0  
\end{equation*}
for almost all $g\in \G$. But, since $\varphi\in V$ and $V$ is finite-dimensional and $\G$-invariant, it follows that $\varphi\colon \G\to \R^d$ is an analytic function (see \cite[III.4.7, Ex.~2, p.~138]{BtD}, where functions like $\varphi$ are called \emph{representative functions}). Since we are assuming $\G$ to be connected, it follows that the zero set of $\varphi$ has measure zero. Thus, for almost all $g$, the matrix $M(gx)$ has nonzero kernel. This implies that $gx\in\partial S$ for almost all $g$, because $M$ is positive-definite in the interior of $S$. Since $\partial S$ is closed, we have $gx\in \partial S$ for all $g\in \G$ and hence $x\in \partial S$.
\end{proof}

The statement of Proposition~\ref{prop:converse} becomes false if we drop the assumption that $\G$ is connected. Indeed, if $\G$ is finite, then $\cV$ itself is finite-dimensional.

\section{Preliminaries from polar representations}
\label{sec:prelimpolar}

We now recall basic facts on polar representations, see \cite{PT87, Dadok85} for details.

\begin{Def}\label{D:polar}
    Let $\K$ be a compact group, and let $V$ be a real finite-dimensional vector space with inner product, on which $\K$ acts by orthogonal transformations. We say $V$ is a \emph{polar} $\K$-representation if it admits a \emph{section}, that is, a vector subspace $\mathfrak{a}\subset V$ which meets every $\K$-orbit orthogonally. 
\end{Def}

Given a polar $\K$-representation $V$ with section $\mathfrak{a}\subset V$, there is a finite subgroup $W\subset \O(\mathfrak{a})$, called the \emph{Weyl group} (sometimes, and perhaps more appropriately,  called the \emph{generalized Weyl group}), with the property that, for every $v\in\mathfrak{a}$, the $W$-orbit $W\cdot v$ coincides with the intersection of $\mathfrak{a}$ with the $\K$-orbit $\K\cdot v$. In other words, the inclusion $\mathfrak{a}\hookrightarrow V$ induces a bijection $\mathfrak{a}/W\to V/\K$ of orbit spaces.

\begin{rem}
If $\K_0\subset \K$ is the connected component of the identity, then the $\K$-representation $V$ is polar if and only if its restriction to $\K_0$ is polar, and their sections are the same. Moreover, the Weyl group of the latter is easily seen to be a subgroup of the Weyl group of the former.
\end{rem}

Polar representations $V$ of connected groups $\K$ have been classified in \cite{Dadok85}, from which it follows  that they are all orbit-equivalent to (that is, have the same orbits as) the isotropy representation of a symmetric space (\cite[Prop.~6]{Dadok85}, see also \cite[Prop.~1.2]{KS22}). In particular, if $\K$ is connected, the Weyl group is a crystallographic reflection group, which justifies the name ``Weyl group''.
Moreover, the Chevalley Restriction Theorem (see, e.g., \cite[Thm~4.12]{PT87}) implies that restriction to the section $\mathfrak{a}$ of a polar $\K$-representation $V$ induces an isomorphism $\R[V]^{\K}\to \R[\mathfrak{a}]^W$ between the algebras of invariant polynomials. 

From the bijection $\mathfrak{a}/W\to V/\K$ between orbit spaces, it follows that intersection with $\mathfrak{a}$ induces a bijection between the set of $\K$-invariant subsets of $V$ and the set of $W$-invariant subsets of $\mathfrak{a}$. A key fact for us is that this bijection preserves convexity:

\begin{prop}\label{P:1-1}
    Let $V$ be a polar $\K$-representation, with section $\mathfrak{a}$ and Weyl group $W$. Then the maps in \eqref{eq:bijection-convex}, namely,
    \[\mathcal{O}\mapsto \mathcal{O}\cap \mathfrak{a}, \qquad S\mapsto \K\cdot S\]
    are inverse to each other, and establish a bijection between $\K$-invariant convex subsets $\mathcal{O}\subset V$ and $W$-invariant convex subsets $S\subset \mathfrak{a}$.
\end{prop}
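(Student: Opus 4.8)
The plan is to prove the two set-maps are mutually inverse and land in the claimed classes, using the bijection between $\K$-invariant subsets of $V$ and $W$-invariant subsets of $\mathfrak a$ that is already available from the orbit-space identification $\mathfrak a/W \to V/\K$. The genuinely new content is that \emph{convexity} is preserved in both directions, so I would organize the argument around that. For the direction $\mathcal O \mapsto \mathcal O \cap \mathfrak a$: if $\mathcal O \subset V$ is convex, then $\mathcal O \cap \mathfrak a$ is convex simply because it is the intersection of a convex set with a linear subspace, so this direction is essentially immediate once one notes $\mathfrak a$ is a subspace. The substance is the reverse direction.

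For the direction $S \mapsto \K \cdot S$: I would first recall Kostant's Convexity Theorem in the form needed. Let $P\colon V \to \mathfrak a$ denote the orthogonal projection onto the section. Kostant's theorem (in the polar-representation generality, via the reduction to isotropy representations of symmetric spaces from \cite{Dadok85, PT87}, or citing it directly for polar representations) states that for every $v \in \mathfrak a$, the image $P(\K \cdot v)$ of the $\K$-orbit of $v$ equals the convex hull of the $W$-orbit $W \cdot v$, i.e. $P(\K\cdot v) = \operatorname{conv}(W\cdot v)$. Granting this, suppose $S \subset \mathfrak a$ is $W$-invariant and convex; I must show $\mathcal O := \K \cdot S$ is convex. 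Take $x, y \in \mathcal O$ and $\lambda \in [0,1]$; I want $\lambda x + (1-\lambda) y \in \mathcal O$. Since $\mathcal O$ is $\K$-invariant and every $\K$-orbit meets $\mathfrak a$, I may, after acting by a suitable element of $\K$, assume $x \in \mathfrak a$, hence $x \in S$. Now write $y = k \cdot y_0$ with $y_0 \in S \subset \mathfrak a$. Consider $z := \lambda x + (1-\lambda) y$. Its orbit $\K \cdot z$ meets $\mathfrak a$, and to show $z \in \mathcal O = \K\cdot S$ it suffices (using the bijection of orbit spaces) to show that the point $P'(z) \in \mathfrak a$ representing the $\K$-orbit of $z$ lies in $S$ — more precisely, it suffices to show $(\K \cdot z) \cap \mathfrak a \subset S$. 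The key computation: for any $w \in \mathfrak a$ one has $\langle z, w\rangle = \lambda \langle x, w \rangle + (1-\lambda)\langle y, w\rangle$, and I want to control $P(\K\cdot z)$. Here I would invoke Kostant's theorem twice: $P(\K \cdot z) \subseteq \operatorname{conv}\!\big(\text{extreme behavior}\big)$, and more usefully, the standard argument shows $(\K\cdot z)\cap \mathfrak a \subseteq \operatorname{conv}(W\cdot x) + \operatorname{conv}(W\cdot y_0)$-type bound; since $x, y_0 \in S$ and $S$ is $W$-invariant and convex, $\operatorname{conv}(W\cdot x) \subseteq S$ and $\operatorname{conv}(W\cdot y_0)\subseteq S$, and then convexity of $S$ gives $\lambda(\text{pt of }S) + (1-\lambda)(\text{pt of }S)\in S$.

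Let me be more careful about the core inclusion, since that is where the work is. The cleanest route: recall that for a polar representation, a $\K$-invariant subset $\mathcal O$ is convex \emph{if and only if} $\mathcal O \cap \mathfrak a$ is convex and, for every $v \in V$, $P(\K \cdot v) \subseteq \operatorname{conv}(W \cdot P(v)) $ when $P(v)$ is suitably chosen — this is exactly the content that makes Kostant's theorem yield the result, and it is how Sanyal–Saunderson and others argue in the symmetric-matrix case. Concretely: given $x \in S \subseteq \mathfrak a$ and $y = k\cdot y_0$, $y_0 \in S$, and $z = \lambda x + (1-\lambda)y$, pick any $z' \in (\K\cdot z)\cap \mathfrak a$. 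I claim $z' \in \operatorname{conv}(W\cdot x) + \operatorname{conv}(W \cdot y_0)$: indeed $z' = P(z') = \ell\cdot z$ restricted to $\mathfrak a$ for some $\ell \in \K$, so $z' = \lambda\, P(\ell x) + (1-\lambda)\,P(\ell y)$ reading projections appropriately... the honest statement I would use is that $P(\K\cdot x') \subseteq \operatorname{conv}(W\cdot x')$ for $x'\in\mathfrak a$ (Kostant), applied to $x' = x$ and $x' = y_0$, combined with $P(\ell z) = \lambda P(\ell x) + (1-\lambda) P(\ell y)$ and the fact that $P(\ell x)\in P(\K\cdot x) \subseteq \operatorname{conv}(W\cdot x)\subseteq S$, similarly $P(\ell y)\in \operatorname{conv}(W\cdot y_0)\subseteq S$. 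Hence $z' = \lambda P(\ell x) + (1-\lambda)P(\ell y) \in S$ by convexity of $S$. Since this holds for every representative $z'$ of the $\K$-orbit of $z$ lying in $\mathfrak a$, and since $z$'s orbit does meet $\mathfrak a$, the orbit-space bijection gives $z \in \K\cdot S = \mathcal O$. That the two maps are mutually inverse is then just the orbit-space bijection: $\K\cdot(\mathcal O\cap\mathfrak a) = \mathcal O$ for $\K$-invariant $\mathcal O$, and $(\K\cdot S)\cap\mathfrak a = W\cdot S = S$ for $W$-invariant $S$.

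The main obstacle is pinning down the precise form of Kostant's Convexity Theorem for general polar representations and the exact projection identity $P(\ell z) = \lambda P(\ell x) + (1-\lambda) P(\ell y)$ — the latter is trivial (linearity of $P$), but one must be careful that $P$ here is honest orthogonal projection onto $\mathfrak a$ and that ``the point of $\mathfrak a$ representing the orbit of $z$'' is literally obtained by projecting a suitable orbit point, which is where Kostant is invoked. I would cite Kostant's theorem in the polar generality (it is stated in \cite{PT87} and follows from Dadok's classification \cite{Dadok85} reducing to symmetric spaces), state the inclusion $P(\K\cdot v) = \operatorname{conv}(W\cdot v)$ for $v\in\mathfrak a$ as the one input, and keep the rest as the short convexity-chasing argument above. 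Everything else — that $\mathcal O\cap\mathfrak a$ is convex, that the maps are inverse — is formal and I would dispatch it in a sentence each.
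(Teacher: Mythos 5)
Your argument is correct, and it fills in a proof where the paper gives only a one-line citation to Kostant's Convexity Theorem (for connected $\K$) and to \cite{Mendes22} and \cite{KS22} (for general $\K$). The heart of your reasoning --- take $x,y\in\K\cdot S$, normalize so that $x\in\mathfrak a$ and $y=k\cdot y_0$ with $y_0\in S$, choose $\ell\in\K$ with $\ell z\in\mathfrak a$, and then decompose $\ell z = P(\ell z)=\lambda P(\ell x)+(1-\lambda)P(\ell y)$ with $P(\ell x)\in\operatorname{conv}(W\cdot x)\subseteq S$ and $P(\ell y)\in\operatorname{conv}(W\cdot y_0)\subseteq S$ --- is exactly the standard derivation of ``invariant convexity transfer'' from the projection form of Kostant's theorem, and is logically sound. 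You also correctly isolate that only the inclusion $P(\K\cdot v)\subseteq\operatorname{conv}(W\cdot v)$ for $v\in\mathfrak a$ is needed, not the full equality, and that the ``mutually inverse'' part is formal from the orbit-space bijection and $W$-invariance ($(\K\cdot S)\cap\mathfrak a=W\cdot S=S$, $\K\cdot(\mathcal O\cap\mathfrak a)=\mathcal O$).

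Two small clean-ups you should make. First, the middle paragraph with ``$P(\K\cdot z)\subseteq\operatorname{conv}(\text{extreme behavior})$'' and the Minkowski-sum claim ``$z'\in\operatorname{conv}(W\cdot x)+\operatorname{conv}(W\cdot y_0)$'' are mis-stated (the latter should carry the weights $\lambda,1-\lambda$); you correct yourself afterward, so just delete the muddled first pass and keep only the clean version. Second, your citation for the projection formula in the polar generality is a little loose: Dadok's classification reduces polar representations of \emph{connected} compact groups to isotropy representations of symmetric spaces, where Kostant applies directly; for disconnected $\K$ (so that $W$ may be strictly larger than a crystallographic Weyl group) one should instead cite \cite[Cor.~B]{Mendes22} or \cite[Thm.~3.3, Cor.~3.4]{KS22}, as the paper does. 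With those references in place your proof and the paper's are really the same argument --- the paper simply delegates it.
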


\begin{proof}
This result follows from Kostant's Convexity Theorem (\cite[Thm.~8.2]{Kostant73}, see also \cite[Thm.~1, Cor.~2]{Mendes22}) when $\K$ is connected, and from \cite[Cor.~B]{Mendes22} in general, see also \cite[Thm~3.3 and Cor.~3.4]{KS22}.    
\end{proof}

\begin{ex}\label{ex:sym}
    Let $\K=\O(n)$ act by conjugation on the space $V=\Sym^2(\R^n)$ of symmetric $n\times n$ matrices endowed with the inner product $\langle A, B \rangle=\tr AB$; this is the isotropy representation of the symmetric space $\GL(n)/\O(n)$, with Cartan involution $A\mapsto (A^{-1})^T$. A section is given by the subspace $\mathfrak{a}$ of all diagonal matrices, which can be identified with $\R^n$. The corresponding Weyl group is the symmetric group $W=S_n$, which acts on $\mathfrak{a}$ by permuting the components. 
    
    Note that the orbits of $\K$ are all connected (provided $n\geq 2$), so $\K_0=\SO(n)$ and $\K$ have the same orbits, and, in particular, the corresponding Weyl groups coincide.

    In this example, the isomorphism $\R[V]^{\K}\to \R[\mathfrak{a}]^W$ yielded by the Chevalley Restriction Theorem can be seen as the familiar fact that the polynomials $\sum_i x_i^d$, for $d=1, \ldots , n$ generate the algebra of all symmetric polynomials in $x_1, \ldots, x_n$, and that their unique $\O(n)$-invariant extension to $V$ can be written as $A\mapsto \tr(A^n)$, and are hence polynomials.

    A convex subset $S$ of $\mathfrak{a}\cong\R^n$ that is invariant under $W=S_n$ is commonly called \emph{symmetric}. The corresponding $\O(n)$-invariant convex subset of $V$ is given by the set of symmetric matrices $A$ whose vector $\lambda(A)$ of eigenvalues (in, say, non-increasing order) belongs to $S$, and such sets are often called \emph{spectral convex sets}. In this context, Kostant's Convexity Theorem reduces to the Schur--Horn Theorem, which implies this special case of Proposition \ref{P:1-1}, see \cite[Lemma 2.1]{SS22}.
\end{ex}

\begin{rem}
    The bijection \eqref{eq:bijection-convex} induces a bijection between the faces of $\K$-in\-variant convex subsets of $V$ and the faces of $W$-invariant convex subsets of $\mathfrak a$, up to conjugation, and exposed faces are mapped to exposed faces, see~\cite{bgh1,bgh2}.
\end{rem}

\begin{rem}
    A bijection similar to \eqref{eq:bijection-convex} has been shown to hold in a more general context \cite[Cor.~B]{Mendes22}. Namely, given two Euclidean vector spaces $V,V'$, two submetries $V\to X$ and $V'\to X'$, and an isometry $X\to X'$, the latter induces a bijection between saturated closed convex subsets of $V$ and $V'$, respectively. In the polar case, one recovers Proposition \ref{P:1-1} by taking $V\to X$ and $V'\to X'$ to be the natural quotient maps $V\to V/\K$ and $\mathfrak{a}\to\mathfrak{a}/W$, and the isometry $V/\K\to \mathfrak{a}/W$ to be the natural bijection described after Definition \ref{D:polar}. 
\end{rem}

\section{Lifting convex algebro-geometric properties from a section}
\label{sec:proofthmb}

In view of Proposition \ref{P:1-1}, it is natural to ask which classes of convex subsets of Euclidean spaces are preserved under the bijection \eqref{eq:bijection-convex}. In this section, we answer this question affirmatively for convex semi-algebraic sets (Proposition~\ref{P:semialg}), spectrahedral shadows (Proposition~\ref{prop:shadows}), and rigidly convex sets (Proposition~\ref{prop:rigidlyconvex}), which proves Theorem~\ref{thm:main-polar} in the Introduction. Furthermore, we conjecture that spectrahedra are also preserved (Conjecture~\ref{con:specpolar}) and so are sums of squares (Conjecture~\ref{con:sos}).

\begin{prop}\label{P:semialg}
The bijection \eqref{eq:bijection-convex} maps convex semi-algebraic sets to convex semi-algebraic sets.
\end{prop}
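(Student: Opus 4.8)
The plan is to prove both directions of the bijection separately, using the Chevalley Restriction Theorem as the main engine. For the direction $\mathcal{O}\mapsto\mathcal{O}\cap\mathfrak{a}$: suppose $\mathcal{O}\subset V$ is $\K$-invariant, convex, and semi-algebraic. Then $\mathcal{O}\cap\mathfrak{a}$ is the intersection of the semi-algebraic set $\mathcal{O}$ with the linear subspace $\mathfrak{a}$, hence semi-algebraic by Tarski--Seidenberg; it is convex as an intersection of convex sets, and $W$-invariant since $W$ acts on $\mathfrak{a}$ as the restriction of elements of $\K$ (each $w\in W$ extends to some $k\in\K$ preserving $\mathfrak{a}$). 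So this direction is essentially immediate and requires no algebraic input.

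The substantive direction is $S\mapsto\K\cdot S$: given a $W$-invariant convex semi-algebraic set $S\subset\mathfrak{a}$, I must show $\K\cdot S\subset V$ is semi-algebraic (convexity and $\K$-invariance are guaranteed by Proposition \ref{P:1-1}). Here is where I would use invariant theory. Write the polar representation's ring of invariants $\R[V]^{\K}=\R[\sigma_1,\dots,\sigma_r]$, where by Chevalley Restriction the $\sigma_i$ restrict to generators $\bar\sigma_i$ of $\R[\mathfrak{a}]^W$. Consider the polynomial map $\sigma=(\sigma_1,\dots,\sigma_r)\colon V\to\R^r$ and $\bar\sigma=(\bar\sigma_1,\dots,\bar\sigma_r)\colon\mathfrak{a}\to\R^r$; these factor through the orbit spaces $V/\K$ and $\mathfrak{a}/W$, and under the identification $V/\K\cong\mathfrak{a}/W$ they have the same image, call it $\Sigma\subset\R^r$. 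Since $S$ is $W$-invariant, $\bar\sigma(S)\subset\Sigma$ is a semi-algebraic set (image of a semi-algebraic set under a polynomial map is semi-algebraic, Tarski--Seidenberg again), and because $S$ is saturated (being $W$-invariant) we have $S=\bar\sigma^{-1}(\bar\sigma(S))$. The crucial point is then that $\K\cdot S=\sigma^{-1}(\bar\sigma(S))$: a point $v\in V$ lies in $\K\cdot S$ iff its $\K$-orbit meets $\mathfrak{a}$ in a point of $S$ iff $\sigma(v)\in\bar\sigma(S)$, using that $\sigma$ separates $\K$-orbits and that $S$ is saturated. Since $\bar\sigma(S)$ is semi-algebraic and $\sigma$ is polynomial, $\sigma^{-1}(\bar\sigma(S))=\K\cdot S$ is semi-algebraic.

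The main obstacle I anticipate is justifying the orbit-separation and saturation facts cleanly: namely that $\sigma\colon V\to\R^r$ separates $\K$-orbits (true because $\K$ is compact, so invariant polynomials separate orbits — a standard fact), that $\sigma(V)=\bar\sigma(\mathfrak{a})$ as subsets of $\R^r$ (this is exactly the statement that $V/\K\cong\mathfrak{a}/W$ via the natural map, combined with Chevalley Restriction identifying the invariant rings compatibly), and the identity $\K\cdot S=\sigma^{-1}(\bar\sigma(S))$ for $W$-invariant $S$. Each of these is routine given the material recalled in \Cref{sec:prelimpolar}, but they must be assembled in the right order; in particular one should note that $\mathcal{O}\cap\mathfrak{a}$ being $W$-invariant and $S$ being $W$-invariant are what make the respective sets ``saturated'' with respect to the quotient maps, which is what lets one pass back and forth through $\R^r$ without losing semi-algebraicity.
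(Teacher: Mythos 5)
Your proof is correct, but it takes a genuinely different route from the paper's. The paper cites a structural theorem of Br\"ocker (\cite{broecker}) asserting that any $W$-invariant semi-algebraic subset of $\mathfrak{a}$ admits a description as a boolean combination of inequalities in $W$-invariant polynomials, and then lifts each such invariant polynomial uniquely via the Chevalley Restriction isomorphism $\R[V]^{\K}\to\R[\mathfrak{a}]^W$ to get a semi-algebraic description of a $\K$-invariant set $\mathcal O$ with $\mathcal O\cap\mathfrak{a}=S$, whence $\mathcal O=\K\cdot S$. You instead route everything through the Hilbert maps $\sigma\colon V\to\R^r$ and $\bar\sigma\colon\mathfrak{a}\to\R^r$ built from a common generating set of the invariant rings: you apply Tarski--Seidenberg to get that $\bar\sigma(S)$ is semi-algebraic, then use orbit separation for the compact group $\K$ (all orbits closed) and the finite group $W$ to establish $\K\cdot S=\sigma^{-1}\big(\bar\sigma(S)\big)$ and $S=\bar\sigma^{-1}\big(\bar\sigma(S)\big)$, and conclude by pulling back under the polynomial map $\sigma$. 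Both arguments are sound. Your version is somewhat more elementary and self-contained, relying only on Tarski--Seidenberg, Hilbert's finiteness theorem, and orbit separation, rather than on Br\"ocker's result; the paper's version is shorter because it delegates the descent to $\R[\mathfrak{a}]^W$ entirely to \cite{broecker}. In a sense Br\"ocker's theorem packages exactly the descent you carry out by hand, so the approaches are conceptually close but technically distinct.
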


\begin{proof}
    In view of Proposition \ref{P:1-1}, it suffices to show that the maps  \(\mathcal{O}\mapsto \mathcal{O}\cap \mathfrak{a}\) and \(S\mapsto \K\cdot S\) preserve semi-algebraicity. This is obvious for the first map, i.e., taking intersection with the vector subspace $\mathfrak{a}$, so we consider the second map. Let $S\subset\mathfrak{a}$ be a $W$-invariant semi-algebraic subset. This implies, e.g., by \cite{broecker}, that $S$ can be described by a boolean combination of inequalities involving only $W$-invariant polynomials. Replacing each of them by its preimage under the restriction map $\R[V]^{\K}\to \R[\mathfrak{a}]^W$, which exists and is unique by the Chevalley Restriction Theorem, gives a semi-algebraic description of a $\K$-invariant set $\cO$ that satisfies $\cO\cap\fa=S$. This implies that $\cO=\K\cdot S$, so the latter is semi-algebraic.
\end{proof}

\begin{prop}\label{prop:shadows}
    The bijection \eqref{eq:bijection-convex} maps spectrahedral shadows to spectrahedral shadows.
\end{prop}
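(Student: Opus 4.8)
The plan is to prove both directions of the bijection \eqref{eq:bijection-convex} separately. For the direction $\mathcal{O} \mapsto \mathcal{O} \cap \mathfrak{a}$: intersecting a spectrahedral shadow in $V$ with the linear subspace $\mathfrak{a}$ again yields a spectrahedral shadow, since the class of spectrahedral shadows is closed under intersection with affine subspaces (an intersection $f(S') \cap \mathfrak{a}$ can be rewritten by enlarging the lifted spectrahedron to also encode the linear constraints defining $\mathfrak{a}$ in the image). So this direction is essentially formal.

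The substantive direction is $S \mapsto \K \cdot S$. Here I would start from a $W$-invariant spectrahedral shadow $S \subset \mathfrak{a}$ and want to produce a spectrahedral-shadow description of the $\K$-saturation $\mathcal{O} = \K \cdot S$. The natural strategy, mirroring the proof of \Cref{P:semialg}, is: (1) use the result of Kobert and Scheiderer \cite{KS22} that a $W$-invariant spectrahedral shadow in $\mathfrak{a}$ admits an \emph{equivariant} spectrahedral-shadow description — i.e. can be written as $\pi(T)$ where $T$ is a $W$-invariant spectrahedron (for some $W$-representation on the matrix space) and $\pi$ is an equivariant linear map; (2) transport this description across the Chevalley Restriction isomorphism $\R[V]^{\K} \cong \R[\mathfrak{a}]^W$ to realize the defining data over $V$, so that one obtains a $\K$-invariant spectrahedron (or spectrahedral shadow) over $V$ whose intersection with $\mathfrak{a}$ recovers $S$; (3) invoke \Cref{P:1-1} to conclude that this $\K$-invariant set is exactly $\K \cdot S$, hence $\K \cdot S$ is a spectrahedral shadow. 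Step (2) is where one must be careful: the matrix entries in the $W$-equivariant description are $W$-invariant polynomial (or affine-linear, after coordinatizing by invariants) functions, and one needs the correspondence of invariant rings to promote them to $\K$-invariant functions on $V$; the equivariance bookkeeping (matching up the $W$-representation on the section's matrix space with a $\K$-representation on the target) is the delicate part.

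The main obstacle I anticipate is precisely ensuring that the lifted description over $V$ is genuinely a spectrahedral-shadow description and not merely a semi-algebraic one: unlike the situation in \Cref{P:semialg}, where any boolean combination of invariant inequalities suffices, a spectrahedral-shadow certificate is more rigid, and one must check that pulling the matrix pencil back along the Chevalley isomorphism keeps it affine-linear (or linear) in the right coordinates on $V$ — this is where the structural input from \cite{KS22} (which is stated in a form tailored exactly to make this lifting work) is essential, and is why the authors flag that the proof "heavily relies" on that paper. A secondary point to verify is that the projection/lift is compatible with the extra variables introduced by the "shadow" (the fiber coordinates of $T$ over $S$), i.e. that one can take those auxiliary variables to carry a $W$-action and then a $\K$-action consistently; this should follow by averaging but deserves an explicit remark.
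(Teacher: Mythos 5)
Your first direction ($\mathcal O\mapsto \mathcal O\cap\mathfrak a$) matches the paper. The substantive direction, however, has a genuine gap, and the route you sketch is not the one the paper takes.

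First, you misattribute the input from \cite{KS22}. That paper does not assert that a $W$-invariant spectrahedral shadow in $\mathfrak a$ admits a $W$-equivariant spectrahedral-shadow description; its content, and what the present paper actually uses, is that \emph{polar orbitopes} (convex hulls of single $\K$-orbits) are spectrahedra, together with an explicit linear matrix inequality description in terms of the fundamental representations $\rho^j$ of $\mathfrak g_\C$: by \cite[Thm.~4.4, Lem.~4.6]{KS22}, $y\in\mathcal O_x$ iff $L_j(x)\,\Id_{V_j}\succeq\rho^j(y)$ for all $j$, where $L_j$ is \emph{linear} in $x$ on the Weyl chamber $C$. The result you are thinking of --- that invariant spectrahedra of a finite group admit equivariant descriptions --- is in \cite{Kummer21} (cf.\ Theorem~\ref{thm:main-equivdescrp} here for compact $\G$), and even that is about spectrahedra, not shadows.

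Second, and more seriously, your step (2) has no mechanism behind it. The Chevalley Restriction Theorem gives an isomorphism $\R[V]^{\K}\to\R[\mathfrak a]^{W}$ of rings of \emph{scalar invariant} polynomials. The entries of a $W$-equivariant matrix pencil $\overline M\colon\mathfrak a\to\Sym^2(\R^d)$ are not $W$-invariant; they are components of a $W$-equivariant map into a $W$-module. There is no ``Chevalley restriction for equivariants'' that would let you pull this back to a $\K$-equivariant affine-linear pencil on $V$ --- and if such a transfer existed it would essentially settle Conjecture~\ref{con:specpolar}, which the paper explicitly leaves open. So your proposed strategy is not a proof of the spectrahedral-shadow statement but rather a restatement of the hard open problem.

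The paper instead avoids equivariant descriptions altogether. After reducing to $\K$ connected (the disconnected case is handled by writing $\K\cdot S$ as the convex hull of finitely many translates $g_i\K_1\cdot S$ and invoking \cite{NS09,HN10} --- a step you omit), it writes $\K\cdot S=\bigcup_{x\in S\cap C}\mathcal O_x$, defines the spectrahedron
$N=\{(y,x)\in V\times\mathfrak a : L_j(x)\,\Id_{V_j}\succeq\rho^j(y)\ \forall j\}$,
intersects with $V\times(S\cap C)$ (a spectrahedral shadow since $S$ is one and $C$ is a polytope), and projects to $V$. This uses the linearity of the Kobert--Scheiderer description in \emph{both} $x$ and $y$, and requires no equivariance at all. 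If you want to salvage your outline you would need to replace step (2) with an actual construction; as stated, it is the missing idea.
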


\begin{proof}
Similarly to the previous case, this is clear for the map  \(\mathcal{O}\mapsto \mathcal{O}\cap \mathfrak{a}\), so we consider the map \(S\mapsto \K\cdot S\). First, we reduce to the case in which $\K$ is connected. Let $\K_1$ be the connected component of $\K$ containing the identity. The restriction of the representation $V$ to $\K_1$ is again polar with section $\mathfrak{a}$, and the corresponding Weyl group $W_1$ is a subgroup of $W$. Assume the conclusion is true for $\K_1$. 
    
Let $S\subset\mathfrak{a}$ be a $W$-invariant spectrahedral shadow. Then $S$ is also $W_1$-invariant, so that, by assumption, $\K_1\cdot S$ is a spectrahedral shadow. Let $g_1,\ldots, g_l\in \K$ be  representatives of the $\K_1$-cosets in $\K$. Then $\K\cdot S = g_1\K_1\cdot S \cup \cdots \cup g_l \K_1\cdot S$. Since $\K\cdot S$ is convex, it coincides with the convex hull of the union of the spectrahedral shadows $g_i\K_1\cdot S$, for $i=1,\ldots, l$. Therefore, $\K\cdot S$ is a spectrahedral shadow by \cite{NS09}, see also \cite[Thm.~2.2]{HN10}. We henceforth assume $\K$ is connected.

We use notation from \cite{KS22} in the rest of this proof. By Dadok's classification \cite[Prop.~1.2]{KS22}, we may assume $V=\mathfrak{p}$, where $\mathfrak{g}=\mathfrak{k}\oplus\mathfrak{p}$ is a Cartan decomposition of a real semi-simple Lie algebra $\mathfrak{g}$, $\mathfrak{k}$ is the Lie algebra of $\K$, $\mathfrak{a}$ is a maximal Abelian subspace of $\mathfrak{p}$, and the action of $\K$ on $V=\mathfrak{p}$ is given by the adjoint representation. Choose a Weyl chamber $C\subset \mathfrak{a}$. Let $S\subset\mathfrak{a}$ be a $W$-invariant spectrahedral shadow. Since $S$ is $W$-invariant and convex, 
    \[S=\bigcup_{x\in S\cap C} P_x,\]
    where $P_x$ is the \emph{moment polytope}, i.e., the convex hull of the orbit $W\cdot x$. Thus 
\begin{equation}
\K\cdot S=\bigcup_{x\in S\cap C} \mathcal{O}_x, \label{E:union}
\end{equation}
    where $\mathcal{O}_x=\K\cdot P_x$ is the \emph{polar orbitope}, i.e., the convex hull of the orbit $\K\cdot x$.

    From \cite[Thm.~4.2]{KS22}, polar orbitopes are spectrahedra. We need the more precise statement given in \cite[Thm.~4.4]{KS22}, namely,
\begin{equation} \label{E:orbitope}
y\in\mathcal{O}_x \iff \forall j=1, \ldots, n, \quad  c_j(x)\cdot \operatorname{Id}_{V_j}\succeq \rho^j(y),
\end{equation}
   where $n=\dim \mathfrak{a}$, the $\rho^j\colon \mathfrak{g}_\C\to \operatorname{End}(V_j)$ are certain irreducible (complex) representations of the complex Lie algebra $\mathfrak{g}_\C$, and $c_j(x)$ denotes the largest eigenvalue of $\rho^j(x)$. Note that for a certain choice of Hermitian inner product on $V_j$, the linear maps $\rho^j(z)$, for all $z\in V$, are self-adjoint (see \cite[Lemma~4.6(a)]{KS22}), so that the eigenvalues of $\rho^j(x)$ are all real, and hence ``$c_j(x)\cdot \operatorname{Id}_{V_j}\succeq \rho^j(y)$'' makes sense.

    By \cite[Lemma 4.6(b)]{KS22}, the function $C\to \R$ given by $x\mapsto c_j(x)$ is the restriction to $C$ of a linear function, say $L_j\colon\mathfrak{a}\to \R$. (In the notation of \cite{KS22}, $L_j$ is the restriction to $\mathfrak{a}$ of the highest weight $\omega_{i(j)}\in \mathfrak{h}^{\vee}$ of the representation $\rho^j$.) Define $N\subset V\times\mathfrak{a}$ by
    \[N=\{ (y,x) : L_j(x)\cdot \operatorname{Id}_{V_j}\succeq \rho^j(y) \quad \forall j=1,\cdots, n\}\]
    Since $\rho^j$ and $L_j$ are linear maps, $N$ is a spectrahedron. Let 
    \[N'=N\cap \big( V\times(S\cap C) \big).\]
    Since $S$ is a spectrahedral shadow by assumption, $C$ is a polytope, and $N$ is a spectrahedron, it follows that $N'$ is a spectrahedral shadow, because the class of spectrahedral shadows in closed under finite intersections. By \eqref{E:union} and \eqref{E:orbitope}, we have $\K\cdot S=\pi_1(N')$, where $\pi_1\colon V\times \mathfrak{a}\to V$ is the linear projection onto the first factor. Therefore $\K\cdot S$ is a spectrahedral shadow.
\end{proof}

\begin{rem}
    Proposition~\ref{prop:shadows} has been proved in the special case of Example \ref{ex:sym} in \cite[Thm.~4.1]{SS22}.
\end{rem}

We conjecture that the map \(S\mapsto \K\cdot S\), and hence the bijection \eqref{eq:bijection-convex}, also preserves the class of spectrahedra:

\begin{con}\label{con:specpolar}
    Let $\K$ be a compact Lie group and $V$ be a polar $\K$-representation with section $\mathfrak{a}\subset V$ and Weyl group $W$. If $S\subset \mathfrak{a}$ is a $W$-invariant spectrahedron, then $\K\cdot S\subset V$ is a spectrahedron. 
\end{con}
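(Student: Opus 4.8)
We do not prove \Cref{con:specpolar}, but we outline a strategy toward it and indicate where the main obstacle lies. The plan is to push the argument of \Cref{prop:shadows} as far as it goes, isolate the single point at which passing to shadows is unavoidable in that proof, and try to replace it. First I would run the same reductions. Restricting $V$ to the identity component $\K_1\subseteq\K$ keeps it polar with the same section $\mathfrak a$ and with Weyl group $W_1\leq W$, and $\K\cdot S$ is the convex hull of finitely many translates of $\K_1\cdot S$; here one must be careful, since convex hulls of finitely many spectrahedra need not be spectrahedra, so this step requires a separate argument or one first attacks the conjecture for connected $\K$. Assuming $\K$ connected, Dadok's classification lets us take $V=\mathfrak p$ with $\mathfrak g=\mathfrak k\oplus\mathfrak p$ a Cartan decomposition, $\K$ acting by the adjoint representation, $\mathfrak a$ a maximal Abelian subspace, $W$ the Weyl group, and $C\subseteq\mathfrak a$ a Weyl chamber, which is a simplicial cone and hence a polyhedron.

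Next I would recast $\K\cdot S$ as a linear preimage of a single ``master'' convex set. Set $\rho:=\bigoplus_{j=1}^n\rho^j\colon V\to\operatorname{End}\!\big(\bigoplus_j V_j\big)$, which is linear and, by \cite[Lemma~4.6(a)]{KS22}, takes values in self-adjoint operators, and set $\Lambda\colon\mathfrak a\to\operatorname{End}\!\big(\bigoplus_j V_j\big)$, $\Lambda(x):=\bigoplus_j L_j(x)\,\operatorname{Id}_{V_j}$, which is linear. By \eqref{E:union} and the Kobert--Scheiderer description \eqref{E:orbitope}, $y\in\K\cdot S$ if and only if $\rho(y)\preceq\Lambda(x)$ for some $x\in S\cap C$; equivalently,
\[
\K\cdot S=\rho^{-1}(\mathcal D),\qquad \mathcal D:=\big\{A : A\preceq\Lambda(x)\ \text{for some }x\in S\cap C\big\},
\]
the order ideal of the Loewner order generated by $\Lambda(S\cap C)$. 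Since preimages of spectrahedra under linear maps are spectrahedra (a Hermitian linear matrix inequality defines a spectrahedron after the usual doubling of size), and since $\Lambda(S\cap C)$ is a spectrahedron --- an affine-linearly embedded copy of the spectrahedron $S\cap C$ --- the conjecture reduces to showing that $\mathcal D$ is a spectrahedron, i.e.\ that the Loewner order ideal generated by the spectrahedron $\Lambda(S\cap C)$ is again a spectrahedron.

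This reduction makes the case of $W$-invariant polytopes accessible. If $S$ is a polytope, then $T:=L(S\cap C)\subseteq\R^n$ is a polytope, where $L=(L_1,\dots,L_n)\colon\mathfrak a\to\R^n$ is a linear isomorphism, and its downward closure $T_\downarrow:=\{t : t\leq s\ \text{for some }s\in T\}$ is a polyhedron; being downward closed, $T_\downarrow=\bigcap_i\{c\in\R^n : \langle a_i,c\rangle\leq b_i\}$ with all facet normals $a_i\in\R^n_{\geq0}$. Using $\rho(y)\preceq\Lambda(x)\iff\lambda_{\max}(\rho^j(y))\leq L_j(x)$ for all $j$, membership $y\in\K\cdot S$ amounts to $\sum_j(a_i)_j\,\lambda_{\max}(\rho^j(y))\leq b_i$ for all $i$. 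When the coefficients $(a_i)_j$ are non-negative integers, the left-hand side equals $\lambda_{\max}\big(\widehat\rho^{\,i}(y)\big)$ for the representation $\widehat\rho^{\,i}$ of $\mathfrak g_\C$ on $\bigotimes_j V_j^{\otimes(a_i)_j}$ in which the Lie algebra acts by derivations: the eigenvalues of $\widehat\rho^{\,i}(y)$ are precisely the Minkowski sums of one eigenvalue of $\rho^j(y)$ for each of the $(a_i)_j$ tensor slots of each $j$, so the largest is $\sum_j(a_i)_j\lambda_{\max}(\rho^j(y))$, and $\widehat\rho^{\,i}(y)$ is self-adjoint. Hence the $i$-th constraint becomes the single linear matrix inequality $b_i\,\operatorname{Id}\succeq\widehat\rho^{\,i}(y)$, which is affine-linear in $y$. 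Clearing denominators reduces the rational case to this one, so $\K\cdot S$ is a spectrahedron whenever $S$ is a rational $W$-invariant polytope (granting the reduction to connected $\K$). Note that the facet normals $a_i$ can still be taken rational when $S$ is a single moment polytope $P_x$, recovering \eqref{E:orbitope}, but not for general polytopes.

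The hard part, and the main obstacle, is passing from rational polytopes to arbitrary $W$-invariant spectrahedra $S$ such as a disk: equivalently, showing that the Loewner order ideal $\mathcal D$ generated by the spectrahedron $\Lambda(S\cap C)$ is a spectrahedron when $S$ is curved. Since $\mathcal D$ is the Minkowski sum of a spectrahedron with the cone of negative-semidefinite operators, it is a priori only a spectrahedral shadow --- this is essentially how \Cref{prop:shadows} is proved --- so an unconditional argument cannot merely manipulate shadows. One would have to exploit the very special shape of $\Lambda(S\cap C)$ (a copy of $S$ sitting inside the block-scalar matrices, parametrized over the polyhedral cone $C$) to produce a \emph{monotone} linear matrix inequality representation of $\mathcal D$, that is, one whose coefficient matrices other than the constant term are negative-semidefinite; whether downward-closed spectrahedra of this shape always admit monotone representations is itself open, and I expect this --- or some equally new input about ``largest-eigenvalue-of-a-representation'' inequalities --- to be where the real work lies. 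As a consistency check rather than a proof, $\K\cdot S$ is rigidly convex by \Cref{prop:rigidlyconvex} and Theorem~\ref{thm:main-polar}~(iii), so \Cref{con:specpolar} follows from the Generalized Lax \Cref{conj:glc}; the point of \Cref{con:specpolar} is precisely that it ought to hold unconditionally.
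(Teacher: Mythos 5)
This is a conjecture that the paper explicitly leaves open: the paragraph immediately after it records that \Cref{con:specpolar} is ``still open even in the case of Example \ref{ex:sym}.'' There is therefore no proof in the paper to compare against, and you are right to present only a strategy rather than claim a theorem.

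Your outline is sound and matches the evidence the paper itself assembles. Rewriting $\K\cdot S=\rho^{-1}(\mathcal D)$, with $\mathcal D$ the Loewner downward closure of the linearly embedded copy $\Lambda(S\cap C)$ of $S\cap C$, is an accurate repackaging of the Kobert--Scheiderer machinery used in \Cref{prop:shadows}, and it isolates the obstruction correctly: $\mathcal D$ is the Minkowski sum of a spectrahedron with the negative-semidefinite cone, hence a priori only a shadow, and an unconditional argument would need something like a monotone LMI representation of $\mathcal D$. Your tensor-product argument for rational $W$-invariant polytopes --- using that the eigenvalues of $\bigoplus_j\rho^j(y)$ add under the derivation action on $\bigotimes_j V_j^{\otimes (a_i)_j}$, so that $\sum_j (a_i)_j\,\lambda_{\max}(\rho^j(y))=\lambda_{\max}\big(\widehat\rho^{\,i}(y)\big)$ --- is a plausible sketch that generalizes the one concrete piece of positive evidence the paper cites, namely \cite[Thm.~3.3]{SS22} in the setting of \Cref{ex:sym}. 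And your closing consistency check is precisely the paper's own remark preceding \Cref{prop:rigidlyconvex}: the Generalized Lax Conjecture implies \Cref{con:specpolar}.

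One place where your caution is not needed: the reduction to connected $\K$ does not have to pass through convex hulls of finitely many spectrahedra. If $S\subset\mathfrak a$ is convex and $W$-invariant, then $\K_1\cdot S$ is already $\K$-invariant, hence $\K\cdot S=\K_1\cdot S$. Indeed, for $g\in\K$ the subspace $g\cdot\mathfrak a$ is again a section for the polar $\K_1$-action, so there is $k\in\K_1$ with $kg\cdot\mathfrak a=\mathfrak a$; then $kg$ acts on $\mathfrak a$ through an element of $W$, so $kg\cdot S=S$, and using normality of $\K_1$ in $\K$ one gets $g\cdot(\K_1\cdot S)=\K_1\cdot(g\cdot S)=\K_1\cdot\big(k^{-1}\cdot(kg\cdot S)\big)=\K_1\cdot S$. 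Since $\K_1\cdot S$ and $\K\cdot S$ are then both $\K$-invariant, convex, and intersect $\mathfrak a$ in $S$, they coincide by \Cref{P:1-1}. This removes the first obstacle you flagged and lets you focus entirely on the connected case, where the genuine difficulty --- curved $S$ and irrational facet data for $T_\downarrow$ --- remains exactly where you located it.
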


We note that Conjecture~\ref{con:specpolar} is still open even in the case of Example \ref{ex:sym}. One piece of evidence in this special case is \cite[Thm.~3.3]{SS22}, which states that if $S$ is a symmetric polytope, then the spectral convex set $\K\cdot S$ is a spectrahedron; further progress was made in \cite{Kummer21}. Moreover, recall that all faces of a spectrahedron are exposed, and the bijection \eqref{eq:bijection-convex} preserves the class of convex sets all of whose faces are exposed~\cite{bgh1,bgh2}.

Next, we show that the Generalized Lax Conjecture~\ref{conj:glc} implies Conjecture~\ref{con:specpolar}:

\begin{prop}\label{prop:rigidlyconvex}
The bijection \eqref{eq:bijection-convex} maps rigidly convex sets to rigidly convex sets.
\end{prop}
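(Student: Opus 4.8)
The plan is to use \Cref{P:semialg} together with the description of rigid convexity via real zero polynomials, reducing everything to a statement about univariate polynomials along lines. As usual, one direction --- that $\mathcal{O}\mapsto \mathcal{O}\cap\mathfrak{a}$ sends rigidly convex sets to rigidly convex sets --- should be the easy one: if $\mathcal{O}\subset V$ is rigidly convex with defining polynomial $p$, then $\mathcal{O}\cap\mathfrak a$ is convex semi-algebraic (by \Cref{P:semialg}), has nonempty relative interior, and is cut out inside $\mathfrak a$ near its boundary by $p|_{\mathfrak a}$; the restriction of a real zero polynomial to an affine subspace through an interior point is again real zero, since restricting to a line inside $\mathfrak a$ is a special case of restricting to a line in $V$. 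One must check that $p|_{\mathfrak a}$ (or rather the appropriate factor of it, namely the defining polynomial of $\mathcal{O}\cap\mathfrak a$ as an algebraic interior) is not identically zero and genuinely witnesses rigid convexity of $\mathcal{O}\cap\mathfrak a$; this follows from \Cref{lem:defpoly} and \Cref{lem:propsdefp}, since the defining polynomial of $\mathcal{O}\cap\mathfrak a$ divides $p|_{\mathfrak a}$.

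For the hard direction, let $S\subset\mathfrak a$ be a $W$-invariant rigidly convex set with defining polynomial $q\in\R[\mathfrak a]^{W}$ --- note $q$ is automatically $W$-invariant by uniqueness in \Cref{lem:defpoly}, since $w^*q$ also defines $S$ for $w\in W$. By the Chevalley Restriction Theorem there is a $\K$-invariant polynomial $\widetilde q\in\R[V]^{\K}$ with $\widetilde q|_{\mathfrak a}=q$. The set $\K\cdot S$ is convex semi-algebraic by \Cref{P:semialg} and has nonempty interior (its interior is $\K\cdot(\operatorname{relint}S)$ union its intersection with an open neighborhood, using that $S$ has interior in $\mathfrak a$ and polar orbits fill out a neighborhood transversally); hence $\K\cdot S$ is an algebraic interior, with some defining polynomial $P$. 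I would show that $P$ divides $\widetilde q$, or at least that $\widetilde q$ vanishes on $\partial(\K\cdot S)$, by arguing that $\partial(\K\cdot S)\subset\K\cdot\partial S$ (a point $v$ on the boundary of $\K\cdot S$ has $v\in\K\cdot w$ for some $w\in\mathfrak a$ on the boundary of $S$, since interior points of $S$ map to interior points of $\K\cdot S$), and $\widetilde q$ vanishes on $\K\cdot\partial S$ because it is $\K$-invariant and equals $q$ on $\mathfrak a$, which vanishes on $\partial S$. Then it remains to verify that $\widetilde q$ --- equivalently $P$ --- is a real zero polynomial with respect to an interior point of $\K\cdot S$. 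Pick $u\in\operatorname{relint}(S)\subset\mathfrak a$; this is an interior point of $\K\cdot S$. For $0\neq v\in V$ I must show $t\mapsto \widetilde q(u+tv)$ has only real roots. Here is where I would like to reduce to the section: the key point is that the line $u+tv$ may not lie in $\mathfrak a$, but one can try to transfer the real-rootedness using $\K$-invariance. The natural attempt is: since $\widetilde q$ is $\K$-invariant, $\widetilde q(u+tv)$ depends only on the $\K$-orbit of $u+tv$; if every point $u+tv$ were $\K$-conjugate into $\mathfrak a$ along a curve staying near $\mathfrak a$, one could compare with $q$ on $\mathfrak a$, but conjugating a whole line into $\mathfrak a$ is exactly the content of Kostant-type convexity and is not straightforward.

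The main obstacle I anticipate is precisely this last step: showing the $\K$-invariant extension $\widetilde q$ of a real zero polynomial on $\mathfrak a$ is again real zero on $V$. I expect the clean route is to invoke \Cref{prop:specrigid} and \Cref{cor:boundaryvanish} in the spirit of the proof of \Cref{thm:finiteequi} and \Cref{prop:converse}: one should not try to prove $\widetilde q$ is real zero directly, but rather find \emph{some} rigidly convex set $S'$ with $\K\cdot S\subset S'$ whose defining polynomial divides $\widetilde q$ and which forces $S'=\K\cdot S$ by \Cref{cor:boundaryvanish}; but this presupposes $\K\cdot S$ is already known to be rigidly convex, so it is circular. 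A better approach, and the one I would pursue, is to use the hyperbolicity characterization: $q$ being real zero at $u$ means the homogenization $Q(t,x)$ of $q$ (with respect to $u$) is a hyperbolic polynomial on $\mathfrak a\times\R$ with respect to the direction $(1,0)$; I would try to show the homogenization of $\widetilde q$ with respect to $u$ is hyperbolic on $V\times\R$ by a slicing argument --- a polynomial on $V$ is hyperbolic along every direction $v$ iff its restriction to every $2$-plane containing $u$ is, and by polarity every $2$-plane can be moved by $\K$ so that it meets $\mathfrak a$ favorably, or else one invokes that hyperbolicity of a $\K$-invariant polynomial is detected on a section, which is a known principle for isotropy representations of symmetric spaces (cf. the discriminant/Vandermonde computations implicit in \Cref{ex:sym}). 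I would state and prove this hyperbolicity-transfer lemma as the technical heart of the argument, and then assemble the two inclusions $\K\cdot S\subset\{$real zero locus of $\widetilde q\} $ and the boundary-vanishing to conclude via \Cref{lem:defpoly} and the definition of rigid convexity.
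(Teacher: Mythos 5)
You have the overall architecture right: the easy direction is fine, and for the hard direction you correctly identify that the defining polynomial $p$ of $S$ is $W$-invariant (by uniqueness in \Cref{lem:defpoly}), that its Chevalley extension $\widetilde p\in\R[V]^\K$ should be the defining polynomial of $\K\cdot S$, and that the real crux is showing $\widetilde p$ is a real zero polynomial. However, at precisely that crux the proposal stops being a proof: you correctly observe that the naive attempt fails because the line $u+tv$ through a general interior point $u$ cannot be moved into $\mathfrak a$ by a single group element, and then you gesture at a ``hyperbolicity-transfer lemma'' via slicing by $2$-planes, which you neither state precisely nor prove. The claim that ``every $2$-plane can be moved by $\K$ so that it meets $\mathfrak a$ favorably'' is not true in general, and invoking an unspecified ``known principle'' does not close the gap. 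As written, the argument is incomplete at its technical heart.

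The missing observation is much more elementary than the route you sketch, and it is exactly what makes the paper's proof short. Since the $\K$-representation on $V$ is \emph{linear}, the origin $0\in\mathfrak a\subset V$ is a $\K$-fixed point, and one may assume $0$ lies in the interior of $S$ (translate by a $\K$-fixed interior point, e.g.\ the barycenter of the $\K$-orbit of any interior point of $\K\cdot S$, which lies in $V^\K\subset\mathfrak a$). By \Cref{prop:rzprops}~(i) it suffices to verify the real zero property with respect to this one point $u=0$. Now for any $v\in V$ the relevant line is $\{t\,v : t\in\R\}$, a line \emph{through the origin}. Pick $g\in\K$ with $g\cdot v\in\mathfrak a$; by linearity $g\cdot(tv)=t\,(g\cdot v)$, so the single element $g$ moves the entire line into $\mathfrak a$. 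By $\K$-invariance, $\widetilde p(tv)=\widetilde p(t\,g\cdot v)=p(t\,g\cdot v)$, and the latter has only real roots in $t$ because $p$ is real zero at $0\in\mathfrak a$. No Kostant-type convexity, no hyperbolicity-transfer lemma, and no $2$-plane slicing is needed: after normalizing the base point to $0$, real-rootedness along lines reduces to the section by nothing more than linearity of the action.
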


\begin{proof}
    As in previous cases, the property of being rigidly convex is clearly preserved by intersecting with a vector subspace, so we shall only consider the map \(S\mapsto \K\cdot S\). Let $S\subset \mathfrak a$ be a $W$-invariant rigidly convex set and $p$ be its defining polynomial as algebraic interior. Since $S$ is $W$-invariant and the defining polynomial is unique by Lemma~\ref{lem:defpoly}, $p$ is also $W$-invariant. By the Chevalley Restriction Theorem, there exists a unique $\K$-invariant polynomial $\widetilde p\in \R[V]^\K$ such that $\widetilde p|_{\mathfrak a}=p$. By Proposition~\ref{prop:rzprops} (i), we have that $p>0$ in the interior of $S$, and hence $\widetilde p>0$ in the interior of $\K\cdot S$. Moreover, $\widetilde p$ vanishes on $\partial (\K\cdot S)$ and has minimal degree among such polynomials because otherwise $p$ would not have minimal degree as the defining polynomial for $S$. Thus, $\K\cdot S$ is an algebraic interior with defining polynomial $\widetilde p$. It remains to show that $\widetilde p$ is real zero with respect to some (hence every) point in the interior of $\K\cdot S$. Without loss of generality, we may assume the origin $0\in \mathfrak a \subset V$ is in the interior of $S$, and hence $\widetilde p(0)=p(0)>0$. Given $v\in V$, let $g\in \K$ be such that $g\cdot v\in \mathfrak a$. Since $p$ is real zero with respect to $0\in\mathfrak a$, the univariate polynomial $t\mapsto p(t\,g\cdot v)$ has only real zeros. Since $\widetilde p(t\,v)=p(t\,g\cdot v)$, it follows that $t\mapsto \widetilde p(t\,v)$ has only real zeros. Thus, $\K\cdot S$ is rigidly convex.
\end{proof}

Closely related to spectrahedra and their shadows are polynomials that can be written as a sum of squares of polynomials \cite{SIAMbook}, which leads us to conjecture:

\begin{con}\label{con:sos}
 Let $p\in\R[\mathfrak a]^W$ be a $W$-invariant polynomial and $q\in\R[V]^\K$ the unique $\K$-invariant polynomial that restricts to $p$. Then $p$ is a sum of squares if and only if $q$ is a sum of squares.
\end{con}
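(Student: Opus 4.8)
One implication is immediate and requires no invariance: if $q=\sum_i q_i^2$ with $q_i\in\R[V]$, then $p=q|_{\mathfrak a}=\sum_i(q_i|_{\mathfrak a})^2$ is a sum of squares in $\R[\mathfrak a]$. The content of the conjecture is therefore the converse, which says that the Chevalley isomorphism $\R[V]^{\K}\cong\R[\mathfrak a]^{W}$ carries the cone of $W$-invariant polynomials that are sums of squares on $\mathfrak a$ onto the cone of $\K$-invariant polynomials that are sums of squares on $V$.

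To prove the converse, I would first reduce to the case in which $\K$ is connected, arguing as in the proof of \Cref{prop:shadows}: if $\K_1\subset\K$ is the identity component, then $V$ is polar under $\K_1$ with the same section $\mathfrak a$ and Weyl group $W_1\subseteq W$, a $W$-invariant polynomial is a fortiori $W_1$-invariant, and by the uniqueness in the Chevalley Restriction Theorem the $\K_1$-invariant extension of $p$ is again $q$; hence the statement for $(\K_1,W_1)$ implies it for $(\K,W)$. Then, by Dadok's classification, I would assume $V=\mathfrak p$ is the isotropy representation of a symmetric space, $\mathfrak a\subset\mathfrak p$ is maximal abelian, $W$ is the crystallographic Weyl group, and $\K$ acts by the adjoint representation.

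The next step is to express ``sum of squares'' equivariantly. Writing $p=\sum_k p_k^2$ and averaging over $W$ gives $p=\tfrac1{|W|}\sum_{w\in W}\sum_k(w\cdot p_k)^2$, so a $W$-invariant sum of squares of degree $2d$ is exactly the invariant part of a positive semidefinite, $W$-equivariant Gram form on $\R[\mathfrak a]_{\le d}$, block-diagonal along the $W$-isotypic components, in the style of symmetry-adapted semidefinite programming \cite{gatermann-parrilo}. The plan is to transport such a certificate to $V$: lift the $p_k$ to polynomials $\widetilde p_k\in\R[V]$, set $q_0=\sum_k\widetilde p_k^2$, average over $\K$ to make it invariant, and correct for the discrepancy between $q_0|_{\mathfrak a}$ and $p$; one would then try to show that, using the section to control the correction terms, the result can be arranged to be a $\K$-invariant sum of squares equal to $q$.

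The main obstacle is precisely this transfer step. The $\K$-isotypic decomposition of $\R[V]$ and the $W$-isotypic decomposition of $\R[\mathfrak a]$ are related by branching rules that are very far from matching degree-by-degree or dimension-by-dimension, so a positive semidefinite Gram form on the section cannot be turned naively into one on $V$; some genuinely new input seems to be needed, such as an explicit description, in the symmetric-space case, of how the restricted-root structure interacts with sums of squares. As a natural first test, it would already be illuminating to settle \Cref{ex:sym}, where $\K=\O(n)$ acts on $V=\Sym^2(\R^n)$: there every $W=S_n$-invariant polynomial on $\mathfrak a=\R^n$ is a polynomial in $\tr(A),\tr(A^2),\dots$, and squared symmetric functions such as elementary symmetric polynomials admit manifestly SOS extensions via traces of exterior and tensor powers of $A$ (as in the classical sum-of-squares expression for the discriminant of a symmetric matrix). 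A positive answer in that case would strongly support the general conjecture and should indicate the correct mechanism.
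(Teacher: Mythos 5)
The statement you were asked about is \Cref{con:sos}, which the paper explicitly presents as a \emph{conjecture}; it offers no proof, only a pointer to partial evidence in \cite[\S6]{Kummer21}. So there is no proof in the paper to compare against, and the correct reading of the task is that the statement is open.

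Your write-up, to its credit, is honest about this: it is not a proof. The parts that are arguments are fine. The easy direction is correct: restriction to $\mathfrak a$ of a sum of squares in $\R[V]$ is a sum of squares in $\R[\mathfrak a]$, and since $q|_{\mathfrak a}=p$ this gives ``$q$ SOS $\Rightarrow p$ SOS.'' The reduction to connected $\K$ is also sound: $V$ is polar under the identity component $\K_1$ with the same section, a $W$-invariant $p$ is $W_1$-invariant, and by uniqueness in the Chevalley Restriction Theorem the $\K_1$-invariant extension of $p$ is again $q$, so the conjecture for $(\K_1,W_1)$ would imply it for $(\K,W)$. Everything after that is only an outline, and it stalls precisely where you say it does: choosing lifts $\widetilde p_k\in\R[V]$ with $\widetilde p_k|_{\mathfrak a}=p_k$ gives an SOS $q_0=\sum_k\widetilde p_k^2$ whose restriction to $\mathfrak a$ is $p$, but $q_0$ is not $\K$-invariant; averaging $q_0$ over $\K$ keeps it SOS (a continuous average of bounded-degree SOS polynomials is SOS) and makes it $\K$-invariant, but it changes the restriction to $\mathfrak a$, because the average sees the values of $q_0$ off the section, and there is no given mechanism to correct this while remaining in the SOS cone. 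That is exactly the missing ingredient, and it is what makes \Cref{con:sos} genuinely open, even in the model case of \Cref{ex:sym}. In short: you have recovered the trivial implication and made a correct reduction, but you have not proved the conjecture, and you correctly flag that a new idea is needed to close the extension direction.
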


Some partial evidence for this conjecture can be found in \cite[\S6]{Kummer21}.

\section{Optimization symmetry reduction with polar representations}\label{sec:optimization}

Symmetries of a convex optimization problem can be used to reduce the number of variables, eliminate degeneracies, and improve numerical conditioning; for a discussion in the context of
semidefinite programming (SDP), see \cite[\S 3.3.6]{SIAMbook}. 

\begin{symmred}\label{symred}
Consider an optimization problem whose feasible set $\mathcal O\subset V$ is a convex set invariant under a polar $\K$-representation on $V\cong\R^n$, say
\begin{equation}\label{eq:optimization-prob}
\begin{aligned}
    \text{maximize}\quad & v^T x, \\
    \text{subject to} \quad & x \in\mathcal O,
\end{aligned}
\end{equation}
for a given $v\in V$. Let $g\in\K$ be such that $g^{-1}\cdot v$ belongs to a given section $\mathfrak a\subset V$ of this polar representation; equivalently, $v\in g\cdot \mathfrak a$. 
Let $\Pi\colon V\to\mathfrak a$ be the orthogonal projection onto the section, and recall that $\Pi(\mathcal O)=\mathcal O\cap\mathfrak a$, by Kostant's Convexity Theorem.
Since $\mathcal O$ is $\K$-invariant, we have
\[\max_{x\in\mathcal O}\; v^T x = \max_{x\in\mathcal O}\; (g^{-1}\cdot v)^T x=   \max_{x\in\Pi(\mathcal O)} \;(g^{-1}\cdot v)^T x = \max_{x\in\mathcal O\cap \mathfrak a} \;(g^{-1}\cdot v)^T x,\]
that is, \eqref{eq:optimization-prob} is equivalent to the \emph{reduced} optimization problem
\begin{equation}\label{eq:optimization-prob-reduced}
\begin{aligned}
    \text{maximize}\quad & (g^{-1}\cdot v)^T x, \\
    \text{subject to} \quad & x \in\mathcal O\cap\mathfrak a,
\end{aligned}  
\end{equation}
whose feasible set $\mathcal O\cap \mathfrak a$ is a $W$-invariant convex subset of the subspace $\mathfrak a\subset V$. A point $x_0\in \mathcal O\cap \mathfrak a$ is an optimal solution to \eqref{eq:optimization-prob-reduced} if and only if all points in $\K\cdot x_0\subset V$ are optimal solutions to \eqref{eq:optimization-prob}.
\end{symmred}

We illustrate this by some well-known examples from Linear Algebra:

\begin{ex}\label{ex:lin-alg}
Let $\K=\SO(n)$, $n\geq2$, act by conjugation on $V=\Sym^2(\R^n)$, and recall this is a polar representation (Example~\ref{ex:sym}), with section $\mathfrak a\cong \R^n$ given by the subspace of diagonal matrices. Let $\mathcal O\subset V$ be an $\SO(n)$-invariant convex set, fix $A\in\Sym^2(\R^n)$ and choose $g\in\SO(n)$ such that $D=g^{-1}Ag$ is diagonal. Then,
\begin{equation*}
    \max_{X\in\mathcal O} \; \tr AX = \max_{X\in\mathcal O\cap \mathfrak a} \; \tr D X,
\end{equation*}
by Symmetry Reduction~\ref{symred}, where the optimization problem on the left-hand side has $\frac{n(n+1)}{2}$ variables, and the one on the right-hand side has $n$ variables. Note that this polar representation is not irreducible, namely $\Sym^2(\R^n)\cong \R\langle\Id_n\rangle\oplus\Sym^2_0(\R^n)$, but this is irrelevant for the above symmetry reduction.

A similar example is the adjoint representation of $\K=\SO(n)$ on its Lie algebra $V=\mathfrak{so}(n)\cong\wedge^2 \R^n$ with inner product $\langle A,B\rangle = \tr A B^T$, which is an irreducible polar representation; the isotropy representation of $\SO(n)$ as a symmetric space. In Linear Algebra terms, this is the $\SO(n)$-action by conjugation on skew-symmetric $n\times n$ matrices. 
The subspace $\mathfrak a\cong\R^m$ consisting of block diagonal matrices 
\begin{equation*}
\operatorname{diag}\left( \! 
\begin{pmatrix}
     0&\theta_1  \\
     -\theta_1& 0
\end{pmatrix},
\dots,
\begin{pmatrix}
     0&\theta_m  \\
     -\theta_m& 0
\end{pmatrix}\!\right)
\,\text{or}\,
\operatorname{diag}\left( \! 
\begin{pmatrix}
     0&\theta_1  \\
     -\theta_1& 0
\end{pmatrix},
\dots,
\begin{pmatrix}
     0&\theta_m  \\
     -\theta_m& 0
\end{pmatrix},0\!\right)
\end{equation*}
depending on the parity of $n$, where $m=\floor{n/2}$ and $\theta_j\in\R$ for all $j=1,\dots,m$, is a section.
Let $\mathcal O\subset V$ be an $\SO(n)$-invariant convex set, fix $A\in V$ and choose $g\in\SO(n)$ such that $D=g^{-1}Ag\in\mathfrak a$ is in the above normal form. Then,
\begin{equation*}
    \max_{X\in\mathcal O} \; \tr AX^T = \max_{X\in\mathcal O\cap \mathfrak a} \; \tr D X^T,
\end{equation*}
by Symmetry Reduction~\ref{symred}, where the optimization problem on the left-hand side has $\binom{n}{2}$ variables, and the one on the right-hand side has $\floor{n/2}$ variables.

The above examples essentially follow from the \emph{symmetric} and \emph{skew-symmetric Schur--Horn theorems}, cf.~\cite[\S 3]{orbitopes}. Kostant's Convexity Theorem gives a unified framework to handle all polar representations, e.g., the adjoint representations of matrix Lie groups, see \cite[\S 8]{KS22} for more examples with matrix Lie groups.
\end{ex}

One may hope that Symmetry Reduction~\ref{symred} would enlarge the class of convex optimization problems that can be solved with SDP, by replacing \eqref{eq:optimization-prob} with \eqref{eq:optimization-prob-reduced}. However, by Proposition~\ref{prop:shadows}, the convex set $\mathcal O\cap\mathfrak a$ is a spectrahedral shadow, i.e., \eqref{eq:optimization-prob-reduced} can be solved with SDP, if and only if $\mathcal O\subset V$ is a spectrahedral shadow, i.e., the original problem \eqref{eq:optimization-prob} can itself be solved with SDP. Nevertheless, 
some efficiency is gained, as the number of variables in the reduced problem \eqref{eq:optimization-prob-reduced} is $\dim \mathfrak a$, while the original problem has $\dim V$ variables. As discussed in Section~\ref{sec:prelimpolar}, polar representations $V$ of connected groups $\K$ are orbit-equivalent to the isotropy representation of a symmetric space. In this language, $\dim V$ is the dimension of the symmetric space, and $\dim \mathfrak a$ is its \emph{rank}. If $V$ is an irreducible $\K$-representation, it follows from the classification of symmetric spaces (see \cite[Chap.~X]{Hel78}) that $\dim \mathfrak a =O(\sqrt{\dim V})$. However, $\dim \mathfrak a$ may be larger if irreducibility is dropped; e.g., the natural action of $\K=\SO(2)^k$ on $V=\R^{2k}$ is polar with section $\mathfrak a =\R^k$.

\begin{rem}
Symmetry Reduction~\ref{symred} applies \emph{mutatis mutandis} to $\K$-representa\-tions $V$ of \emph{higher copolarity}, since the orthogonal projection $\Pi$ onto a generalized section $\mathfrak a$ also satisfies $\Pi(\mathcal O)=\mathcal O\cap \mathfrak a$, see \cite{Mendes22}.    
\end{rem}

\begin{ex}
Even if a $\K$-representation $V$ has a vector subspace $\mathfrak a\subset V$ that meets all $\K$-orbits, Symmetry Reduction~\ref{symred} may fail if the assumption that $V$ is \emph{polar} is dropped. Namely, consider the Hopf action of $\mathsf U(1)$ on $V=\C^2$ given by $e^{i t}\cdot (z_1,z_2)=(e^{it}z_1,e^{it}z_2)$, and note that the vector subspace $\mathfrak a\cong\R^3$ consisting of $(z_1,z_2)\in\C^2$ with $\operatorname{Im} z_2=0$ meets all $\mathsf U(1)$-orbits (but not orthogonally). Although
 \begin{equation}\label{eq:fail}
     \max_{x\in\mathcal O}\; v^T x = \max_{x\in\mathcal O}\; (g^{-1}\cdot v)^T x=  \max_{x\in\Pi(\mathcal O)} \;(g^{-1}\cdot v)^T x,
 \end{equation} 
 where $\Pi\colon\C^2\to\mathfrak a$ is the orthogonal projection, in general $\Pi(\mathcal O)\supsetneq \mathcal O\cap\mathfrak a$. For example, the convex hull $\mathcal O$ of the $\mathsf U(1)$-orbit of $(1,1)\in\C^2$ is such that $\mathcal O\cap\mathfrak a$ is the straight line segment with endpoints $\pm (1,1)\in\C^2$, while $\Pi(\mathcal O)$ is the convex hull of the ellipse $\{(e^{it},\cos t)\in\C^2:t\in\R\}$. Note that the complexity of finding $\Pi(\mathcal O)$ is the same as the original optimization problem, so nothing is gained by using \eqref{eq:fail}.
\end{ex}

\begin{rem}
To implement Symmetry Reduction~\ref{symred}, one must find $g\in\K$ such that $g^{-1}\cdot v$ belongs to $\mathfrak a\subset V$, or, alternatively, find a section $\mathfrak a'\subset V$ with $v\in \mathfrak a'$. For generic $v\in V$, the latter can be achieved by setting $\mathfrak a'$ to be the orthogonal complement of the tangent space at $v$ to the $\K$-orbit through $v$. In practice, this amounts to performing matrix multiplications and solving a linear systems of equations. However, in some situations (such as Example~\ref{ex:lin-alg}), the former problem reduces to diagonalizing a symmetric matrix, for which faster methods are available.
\end{rem}

\def\cprime{$'$}
\providecommand{\bysame}{\leavevmode\hbox to3em{\hrulefill}\thinspace}
\providecommand{\MR}{\relax\ifhmode\unskip\space\fi MR }
\providecommand{\MRhref}[2]{%
  \href{http://www.ams.org/mathscinet-getitem?mr=#1}{#2}
}
\providecommand{\href}[2]{#2}


\begin{thebibliography}{BKM21}

\bibitem[Bar02]{barvinok}
{\sc A.~Barvinok}, \emph{A course in convexity}, Graduate Studies in
  Mathematics, vol.~54, American Mathematical Society, Providence, RI, 2002.
  \MR{1940576}

\bibitem[BKM21]{BKM-Siaga}
{\sc R.~G. Bettiol, M.~Kummer, and R.~A.~E. Mendes}, \emph{Convex algebraic
  geometry of curvature operators}, SIAM J. Appl. Algebra Geom. \textbf{5}
  (2021), no.~2, 200--228. \MR{4252070}


\bibitem[BGH13]{bgh1}
{\sc L.~Biliotti, A.~Ghigi, and P.~Heinzner}, \emph{Polar orbitopes}, Comm.
  Anal. Geom. \textbf{21} (2013), no.~3, 579--606. \MR{3078948}

\bibitem[BGH16]{bgh2}
{\sc \bysame}, \emph{Invariant convex sets in polar representations}, Israel J.
  Math. \textbf{213} (2016), no.~1, 423--441. \MR{3509478}


\bibitem[BPT13]{SIAMbook}
{\sc G.~Blekherman, P.~A. Parrilo, and R.~R. Thomas (eds.)}, \emph{Semidefinite
  optimization and convex algebraic geometry}, MOS-SIAM Ser.\ Optim.\ 13, SIAM, Philadelphia, 2013. \MR{3075433}

\bibitem[Br{\"o}98]{broecker}
{\sc L.~Br{\"o}cker}, \emph{On symmetric semialgebraic sets and orbit spaces},
  Singularities symposium -- {\L}ojasiewicz 70.
  Banach Center Publ., 44. Polish Academy of Sciences, Institute of Mathematics, Warsaw, 1998,
  pp.~37--50.

\bibitem[BtD95]{BtD}
{\sc T.~Br\"{o}cker and T.~tom Dieck}, \emph{Representations of compact {L}ie
  groups}, Graduate Texts in Mathematics, vol.~98, Springer-Verlag, New York,
  1995, Translated from the German manuscript, Corrected reprint of the 1985
  translation. \MR{1410059}

\bibitem[Dad85]{Dadok85}
{\sc J.~Dadok}, \emph{Polar coordinates induced by actions of compact {L}ie
  groups}, Trans. Amer. Math. Soc. \textbf{288} (1985), no.~1, 125--137.
  \MR{773051}

\bibitem[GP04]{gatermann-parrilo}
{\sc K.~Gatermann and P.~A. Parrilo}, \emph{Symmetry groups, semidefinite
  programs, and sums of squares}, J. Pure Appl. Algebra \textbf{192} (2004),
  no.~1-3, 95--128. \MR{2067190}

\bibitem[Hel78]{Hel78}
 {\sc S.~Helgason}, \emph{Differential geometry, Lie groups, and symmetric spaces}, Pure and Applied Mathematics, 80, Academic Press, New York-London, 1978 \MR{0514561}

\bibitem[HN10]{HN10}
{\sc J.~W. Helton and J.~Nie}, \emph{Semidefinite representation of convex
  sets}, Math. Program. \textbf{122} (2010), no.~1, 21--64. \MR{2533752}

\bibitem[HV07]{HV07}
{\sc J.~W. Helton and V.~Vinnikov}, \emph{Linear matrix inequality
  representation of sets}, Comm. Pure Appl. Math. \textbf{60} (2007), no.~5,
  654--674. \MR{2292953}

\bibitem[KS22]{KS22}
{\sc T.~Kobert and C.~Scheiderer}, \emph{Spectrahedral representation of polar
  orbitopes}, Manuscripta Math. \textbf{169} (2022), no.~1-2, 185--208.
  \MR{4462662}


\bibitem[Kos73]{Kostant73}
{\sc B.~Kostant}, \emph{On convexity, the {W}eyl group and the {I}wasawa
  decomposition}, Ann. Sci. \'{E}cole Norm. Sup. (4) \textbf{6} (1973),
  413--455. \MR{364552}


\bibitem[Kum16]{2results}
{\sc M.~Kummer}, \emph{Two results on the size of spectrahedral descriptions},
  SIAM J. Optim. \textbf{26} (2016), no.~1, 589--601. \MR{3463702}

\bibitem[Kum21]{Kummer21}
{\sc \bysame}, \emph{Spectral linear matrix inequalities}, Adv. Math.
  \textbf{384} (2021), Paper No. 107749, 36. \MR{4246101}

\bibitem[{Men}22]{Mendes22}
{\sc R.~A.~E. {Mendes}}, \emph{{A geometric take on Kostant's Convexity
  Theorem}}, Transform. Groups, to appear. arXiv:2202.12966.

\bibitem[NP23]{netzer-plaumann}
{\sc T.~Netzer and D.~Plaumann}, \emph{Geometry of linear matrix
  inequalities---a course in convexity and real algebraic geometry with a view
  towards optimization}, Compact Textbooks in Mathematics,
  Birkh\"{a}user/Springer, Cham, 2023. \MR{4633251}

\bibitem[NS09]{NS09}
{\sc T.~Netzer and R.~Sinn}, \emph{A note on the convex hull of finitely many
  projections of spectrahedra}, arXiv (2009), arXiv:0908.3386.

\bibitem[PT87]{PT87}
{\sc R.~S. Palais and C.-L. Terng}, \emph{A general theory of canonical forms},
  Trans. Amer. Math. Soc. \textbf{300} (1987), no.~2, 771--789. \MR{876478}

\bibitem[SS25]{SS22}
{\sc R.~{Sanyal} and J.~{Saunderson}}, \emph{{Spectral Polyhedra}}, Forum Math.\ Sigma 13 (2025), Paper No.\ e30.

\bibitem[SSS11]{orbitopes}
{\sc R.~Sanyal, F.~Sottile, and B.~Sturmfels}, \emph{Orbitopes}, Mathematika
  \textbf{57} (2011), no.~2, 275--314. \MR{2825238}

\bibitem[Sch18]{shadows}
{\sc C.~Scheiderer}, \emph{Spectrahedral shadows}, SIAM J. Appl. Algebra Geom.
  \textbf{2} (2018), no.~1, 26--44. \MR{3755652}


\bibitem[Val09]{vallentin}
{\sc F.~Vallentin}, \emph{Symmetry in semidefinite programs}, Linear Algebra
  Appl. \textbf{430} (2009), no.~1, 360--369. \MR{2460523}

\bibitem[Vin12]{pastpresentfuture}
{\sc V.~Vinnikov}, \emph{{LMI} representations of convex semialgebraic sets and
  determinantal representations of algebraic hypersurfaces: past, present, and
  future}, Mathematical methods in systems, optimization, and control.
  Festschrift in honor of J. William Helton, Birkh{\"a}user, Basel, 2012,
  pp.~325--349.

\end{thebibliography}
 \end{document}